\def\@secnumfont{\bfseries}
\numberwithin{equation}{section}
\numberwithin{equation}{subsection}
\theoremstyle{plain}
\newtheorem{theorem}[equation]{Theorem}
\newtheorem{lemma}[equation]{Lemma}
\newtheorem{proposition}[equation]{Proposition}
\newtheorem{corollary}[equation]{Corollary}
\newtheorem{cor}[equation]{Corollary}
\newtheorem{claim}[equation]{Claim}
\theoremstyle{definition}
\newtheorem{example}[equation]{Example}
\newtheorem{remark}[equation]{Remark}
\newtheorem{define}[equation]{Definition}
\newtheorem{nota}[equation]{Notation}
\newtheorem{ex}[equation]{Example}
\newtheorem{rem}[equation]{Remark}
\def\bC{\mathbb{C}}
\def\bR{\mathbb{R}}
\def\bZ{\mathbb{Z}}
\def\bH{\mathbb{H}}
\def\hh{\mathfrak{h}}
\def\vv{\mathfrak{v}}
\newcommand{\mm}{\mathbf{m}}
\newcommand{\ccc}{\mathbf{c}}
\newcommand{\elll}{p}
\newcommand{\calS}{\mathcal{S}}
\author{Alexander A. Kubasch}
\address{Alfr\'ed R\'enyi Institute of Math.,
Re\'altanoda utca 13-15, H-1053, Budapest, Hungary \newline
 \hspace*{3mm} ELTE - Univ. of Budapest, Dept. of Geo.,
 P\'azm\'any P\'eter s\'et\'any 1/A, 1117, Budapest, Hungary}
\email{kubasch.alexander@renyi.hu}
\author{Andr\'as N\'emethi}
\thanks{The authors are partially supported by  ``\'Elvonal (Frontier)'' Grant KKP 144148}
\address{Alfr\'ed R\'enyi Institute of Math.,
Re\'altanoda utca 13-15, H-1053, Budapest, Hungary \newline
 \hspace*{3mm} ELTE - Univ. of Budapest, Dept. of Geo.,
 P\'azm\'any P\'eter s\'et\'any 1/A, 1117, Budapest, Hungary \newline \hspace*{3mm}
  BBU - Babe\c{s}-Bolyai Univ., Str, M. Kog\u{a}lniceanu 1, 400084 Cluj-Napoca, Romania
 \newline \hspace*{3mm}
BCAM - Basque Center for Applied Math.,
Mazarredo, 14 E48009 Bilbao, Basque Country, Spain}
\email{nemethi.andras@renyi.hu }
\author{Gerg\H{o} Schefler}
\address{Alfr\'ed R\'enyi Institute of Math.,
Re\'altanoda utca 13-15, H-1053, Budapest, Hungary \newline
 \hspace*{3mm} ELTE - Univ. of Budapest, Dept. of Geo.,
 P\'azm\'any P\'eter s\'et\'any 1/A, 1117, Budapest, Hungary}
\email{schefler.gergo@renyi.hu}
\title
[Structural properties of the lattice cohomology of curve singularities]
{Structural properties of the lattice cohomology \\ of curve singularities}
\begin{document}

\keywords{}
\subjclass[2010]{Primary. 32S05, 32S10, 32S25;
Secondary. 14Bxx, 57K18}

\begin{abstract}
    \noindent The lattice cohomology of a reduced  curve singularity was 
    introduced in \cite{AgNeIV}. It is a bigraded $\bZ[U]$-module $\bH^*=\oplus_{q,n}\bH^q_{2n}$,  that categorifies the $\delta$-invariant and extracts 
    key geometric information from the semigroup of values. 

     In the present paper we prove three  structure theorems for this new invariant: (a) the weight-grading of the reduced cohomology is \--- just as in the case of the topological lattice cohomology of normal surface singularities \cite{Nlattice} \--- nonpositive; 
     (b) the graded $\bZ[U]$-module structure of $\bH^0$  determines whether or not a given curve is Gorenstein; and finally (c) the lattice cohomology module $\bH^0$ of any plane curve singularity determines its multiplicity.
\end{abstract}

\maketitle

\section{Introduction}

\subsection{}
The theory of curves has occupied a central place throughout the history of algebraic geometry, formulated productive programs, produced strong results, constructed deep invariants and
formidable bridges to other areas of mathematics. The theory was necessarily extended to the case of singular curves, thus creating the theory of local complex algebraic/complex analytic curve singularities.
In this later theory, complex plane curve singularities played a distinguished role. In the case of their study, in addition to the analytic/algebraic tools, low-dimensional topology also had an enormous contribution, and altogether they created 
a series of algebraic and topological invariants and deep results. Some recent developments have provided more impetus and new
invariants such as multivariate Poincar\'e series, the local punctual  Hilbert scheme,
new deformation invariants on the analytical side, Jones and HOMFLY polynomial, Heegaard Floer Link and Khovanov theory on the topological side.

On the other hand, for nonplanar curve singularities, topological tools can no longer be used, and
algebraic/analytical methods are much more difficult and less advanced too. In particular, the need for new invariants (which might even replace the missing topological invariants) is a real necessity.

Lattice cohomology is such a new invariant.  For reduced  curve singularities
it was introduced in \cite{AgNeIV}.
Since it has a rather rich structure, it  is able to encode  deep information about the curve germ. It is a bigraded $\bZ[U]$-module analogously to several cohomology theories in low dimesional topology, such as  the Heegaard Floer (HF) or  Heegaard Floer Link (HFL) homology. In fact,
 for {\it plane curve singularities} it can be related with the HFL homology
 (see \cite{GorNem2015,NFilt}), however, it is defined by analytic methods valid 
 to curve germs with arbitrarily large embedded dimension.
 Hence, it can be viewed as an algebraic analogue of the HFL theory
 valid for non-planar singularities as well. 

 \subsection{}
 The lattice cohomology of reduced complex curve singularities is a  member of a family of lattice cohomology theories:
 one can define a topological lattice cohomology  \cite{Nlattice,NGr} and an analytic lattice cohomology \cite{AgNeI} associated to a normal surface singularity, and the analytic construction can be extended to any dimension \cite{AgNeHigh}.

 The structure of all lattice cohomologies is the following: first, $\bH^*$ is a bigraded $\bZ[U]$-module with a cohomological grading given by (the direct sum decomposition of $\bZ[U]$-modules) 
 $\bH^*=\oplus_{q\geq 0}\bH^q$ and a weight grading given by 
 $\bH^q=\oplus_n \bH^q_{2n}$ (where we use a $2\bZ$ grading because of the HFL parallelism). The $\bZ[U]$-module structure is given by a map $U:\bH^q_{2n}\to \bH^q_{2n-2}$ for every $q$ and $n$.

 In fact, $\bH^*$ appears as the cohomology of a certain filtered space $\cdots \subset S_n\subset S_{n+1} \subset\cdots$ as follows: $\bH^q_{2n} := H^q(S_n,\bZ)$ and the $U$-action is given by the maps induced by the inclusions $S_n \hookrightarrow S_{n+1}$. 
  The {\it reduced}
 lattice cohomology 
 (as a  bigraded $\bZ[U]$--module) 
 also  appears naturally: $\bH^*_{\rm red}=\oplus _q \bH^q_{\rm red}$, $\bH^q_{\rm red}=\oplus _n
 \widetilde{H}^q(S_n,\bZ)$.

 This filtered space satisfies the facts that $S_n=\emptyset $ for $n\ll 0$ and is contractible for $n\gg 0$, and it is a finite CW complex for any $n$.
 Therefore, 
 $\bH^*_{\rm red}$ has a finite $\bZ$-rank. It follows that $\bH^*$ is determined by  
 the smallest weight $\min\{n\,\vert \, S_n \not= \emptyset\}$ 
  (the analogue of the $d$-invariant used in HF homology \cite{OSzF})
 and 
 $\bH^*_{red}$. Using them 
 one also  defines the Euler characteristic by:
 \begin{equation}\label{eq:intro}
 {\rm eu}(\bH^*):= -\min\{n\,\vert \, S_n \not= \emptyset\}+\textstyle{\sum_q}\, (-1)^q\ {\rm rank}_\bZ\
 \bH^q_{\rm red}.\end{equation}

 \subsection{} In this article we study the lattice cohomology $\bH^*=\bH^*(C,o)$
 associated with a reduced complex curve singularity $(C,o)$. 
 In this case 
 the Euler characteristic of $\bH^*(C,o)$ is the delta invariant $\delta(C,o)$, that is,
  $\bH^*(C,o)$ is the categorification of $\delta(C,o)$ \cite{AgNeIV}.
  
  Our  goal here is to present three structure theorems for $\bH^*$ and  list certain properties of $\bH^*$ which
  identify concrete  families of curve singularities, or  provide new conceptual geometric information about distinguished 
  classical invariants. 
  The present note provides the first list of results in this direction.

   As we already mentioned the set  $\{n\,\vert \, \bH^*_{\text{red}, \, 2n}(C,o)\not=0\} $ is bounded both from above and below. The lower bound is an interesting new invariant of the curve singularity $(C,o)$, which will be not discussed in this note. On the other hand, the upper bound is universal:

  \begin{theorem}\label{th:int1} (see the Nonpositivity Theorem \ref{th:Main} and Theorem \ref{th:Main2})\label{thm:intr-nonpos}
    The spaces $\{S_n\}_{n \in \bZ}$ used to define $\bH^* (C,o)$ are contractible for all $n>0$. Thus, $\bH^q_{\text{\rm red}, \, 2n}(C,o) = 0$ for all $n>0, q\geq 0$.
\end{theorem}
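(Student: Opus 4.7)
The plan is to bound the weight function on the cubical complex that defines $S_n$ by exploiting the normalization inclusion $\mathcal{O}_{C,o} \hookrightarrow \widetilde{\mathcal{O}}_{C,o} = \bigoplus_{i=1}^r \bC\{t_i\}$, where $r$ is the number of branches of $(C,o)$. Recall that the lattice cohomology of \cite{AgNeIV} is built from the Hilbert function $\mathfrak{h}(\mathbf{l}) = \dim_\bC \mathcal{O}_{C,o}/\mathfrak{F}(\mathbf{l})$ of the valuative filtration $\mathfrak{F}(\mathbf{l}) = \{f : v_i(f) \geq l_i \ \forall i\}$, via a vertex weight of the form $w_0(\mathbf{l}) = 2(\mathfrak{h}(\mathbf{l}) - |\mathbf{l}|)$ with $|\mathbf{l}| = \sum_i l_i$; cube weights are obtained by taking maxima over vertices, and $S_n$ is the subcomplex on which the weight is at most $2n$.

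The first ingredient is the inequality $\mathfrak{h}(\mathbf{l}) \leq |\mathbf{l}|$ for every $\mathbf{l} \in \bZ^r_{\geq 0}$, an immediate consequence of the injection
$$\mathcal{O}_{C,o}/\mathfrak{F}(\mathbf{l}) \;\hookrightarrow\; \widetilde{\mathcal{O}}_{C,o}/\widetilde{\mathfrak{F}}(\mathbf{l}) \;\cong\; \bC^{|\mathbf{l}|}.$$
Hence $w_0 \leq 0$ on the entire non-negative orthant, so $\bR^r_{\geq 0}$ already lies in $S_n$ for every $n \geq 0$. For vertices with some $l_i < 0$, since $v_i(f) \geq 0$ holds automatically for $f \in \mathcal{O}_{C,o}$, one has $\mathfrak{F}(\mathbf{l}) = \mathfrak{F}(\mathbf{l}^+)$, where $\mathbf{l}^+_i = \max(l_i, 0)$. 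This gives
$$w_0(\mathbf{l}) = w_0(\mathbf{l}^+) + 2 \sum_i \max(-l_i, 0),$$
so the weight strictly increases whenever a coordinate becomes more negative, and hence $S_n$ sits inside a bounded shell around the orthant.

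The remaining step is to produce a deformation retraction of $S_n$ onto $\bR^r_{\geq 0}$ for $n > 0$. A clean approach is a discrete Morse matching in which each cube of $S_n$ meeting a vertex with $l_i = -k < 0$ is paired with its $e_i$-shift: the monotonicity above guarantees that the shifted cube has strictly smaller weight, and hence also lies in $S_n$. A lexicographic rule on the negative coordinates makes the matching acyclic, and the unmatched critical cells are exactly those in $\bR^r_{\geq 0}$, so $S_n$ is homotopy equivalent to the orthant and therefore contractible. The main obstacle I anticipate is the combinatorial verification of the matching for cubes having several simultaneously negative coordinates and the proof that no gradient cycles arise; the hypothesis $n > 0$ is precisely what ensures that every cube in the exterior shell admits an inward shift that remains in $S_n$.
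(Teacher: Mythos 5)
Your proposal breaks down at its very first step, and the error propagates through everything that follows. The paper's weight function is $w_0(\ell)=2\hh(\ell)-|\ell|$ (see (\ref{eq:w0})), not $2\bigl(\hh(\ell)-|\ell|\bigr)$. Your inequality $\hh(\ell)\leq |\ell|$ is correct (the injection $\mathcal{O}/\mathcal{F}(\ell)\hookrightarrow\overline{\mathcal{O}}/\overline{\mathcal{F}}(\ell)$ does give it), but with the correct normalization it only yields $w_0(\ell)\leq |\ell|$, which is vacuous. In fact $w_0$ is unbounded above: beyond the conductor one has $\hh(\ell+e_i)=\hh(\ell)+1$ for all $i$, so $w_0(\ell+e_i)=w_0(\ell)+1$ and $w_0$ grows linearly (for irreducible germs, (\ref{eq:w_0irredre}) gives $w_0(\ell)=\ell-2\delta$ for $\ell\geq c$). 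Your claimed conclusion "$w_0\leq 0$ on the entire orthant, so $\bR^r_{\geq 0}\subset S_n$ for all $n\geq 0$" would force $S_0$ to be the whole (contractible) orthant and hence $\bH^0_{\rm red}(C,o)=0$ for every germ, contradicting Proposition \ref{thm:nonnegw}, which shows $\bH^0_{\rm red}\neq 0$ exactly when $(C,o)$ is singular. So the approach proves something false.

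The second half of your argument is also misdirected: the cubical complex here lives entirely in $\bR^r_{\geq 0}$ (the lattice is $\bZ^r_{\geq 0}$), so there are no vertices with negative coordinates and no "exterior shell" to retract. (You may be conflating this with the surface-singularity lattice cohomology, where the lattice is all of $\bZ^s$.) The genuine content of the theorem is that the sublevel sets $S_n$ for $n>0$, which are proper subcomplexes of the orthant with nontrivial combinatorics, are nevertheless contractible. The paper's proof retracts $R(\mathbf{0},\mathbf{c})$ onto $S_n\cap R(\mathbf{0},\mathbf{c})$ by eliminating the lattice points of maximal weight one at a time; the key nontrivial input is Theorem \ref{th:gooddirection}, which produces at every lattice point $\ell$ with $w(\ell)\geq 2$ a "good direction" $e_i$ along which every cube of $S_n$ containing $\ell$ can be collapsed. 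That existence statement is where the semigroup structure enters, via the stability property (\ref{eq:genstability-}) and the fact that generalized local minima have weight $\leq 0$ (Proposition \ref{cl:glm}); this also explains why the bound $n>0$ is sharp. None of this is replaced by your matching argument, which has no analogue of the good-direction lemma and hence no reason for the claimed inward shifts to stay inside $S_n$.
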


 \noindent
 This statement leads to the following coarse classification of germs in terms of $\bH^0$,  based on how `far' the lower bound  is from the upper one, which also shows that the bound $n>0$ is optimal. 

  \begin{proposition} \label{thm:intr-class} (see Proposition \ref{thm:nonnegw})
  
  \noindent (a)  $\bH^0_{\rm red}=0$ if and only if $(C,o)$ is smooth, i.e. its multiplicity $mult(C,o)=1$.

  \noindent (b)  $\bH^0_{\rm red}\not=0$ but $\bH^0_{2n}=0$ for $n<0$
  (equivalently, $\min\{n|S_n\not=\emptyset\}\geq 0$)
  if and only if  $mult(C,o)=2$. In this case $(C,o)$ is a plane curve singularity with a local equation of type $x^2+y^k=0$, $k\geq 2$.

  \noindent (c) $\bH^0_{2n}\not=0$ for some $n<0$ 
   (equivalently, $\min\{n|S_n\not=\emptyset\}<0$)
  if and only if  $mult(C,o)>2$.

\end{proposition}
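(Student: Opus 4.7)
The plan is to exploit the explicit semigroup-based construction of the filtration $\{S_n\}$ from \cite{AgNeIV}, together with the Nonpositivity Theorem \ref{th:int1} and the classical normal form for reduced curve germs of multiplicity $2$.

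First, I would reformulate everything in terms of the filtration: since $\bH^0_{2n}=H^0(S_n,\bZ)$ and $\bH^0_{\mathrm{red},2n}=\widetilde H^0(S_n,\bZ)$, condition (a) asks that each $S_n$ be empty or connected; condition (b) adds the requirement $S_n=\emptyset$ for $n<0$; and condition (c) is precisely the negation of that non-emptiness bound. By Theorem \ref{th:int1}, $S_n$ is contractible (in particular connected) whenever $n>0$, so only weights $n\leq 0$ carry information, and the three conditions form an exhaustive, mutually exclusive trichotomy. Hence it suffices to prove the three forward implications.

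For (a), if $(C,o)$ is smooth the semigroup of values is $\bN$ and the underlying lattice is one-dimensional; a direct computation shows $S_n=\emptyset$ for $n<0$ and $S_n$ is a contractible interval for $n\geq 0$, giving $\bH^0_{\mathrm{red}}=0$. For (b), multiplicity $2$ forces embedding dimension at most $2$ by the Cohen--Macaulay embedding-dimension bound, so $(C,o)$ is a plane curve; after a change of coordinates its equation takes the form $x^2+y^k=0$ for some $k\geq 2$. For this family the semigroup (one-branch if $k$ is odd, two-branch if $k$ is even) is classically known, and a direct check shows that the minimum of the weight function equals $0$, attained at the origin of the lattice, so $S_n=\emptyset$ for $n<0$. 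Combining the Euler characteristic identity \eqref{eq:intro} with $\delta(C,o)\geq 1$ and $\min\{n:S_n\neq\emptyset\}=0$ then forces a nonzero contribution to $\bH^*_{\mathrm{red}}$, and the degree/parity constraints recorded in \cite{AgNeIV} pin this contribution in cohomological degree $0$.

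The principal obstacle is (c): one must show that $\mathrm{mult}(C,o)\geq 3$ implies the weight function takes a strictly negative value at some lattice point. The strategy is to exhibit an explicit witness $\ell^{\ast}\in\bZ_{\geq 0}^r$ built from the multi-valuations of a minimal system of generators of the maximal ideal of $\mathcal O_{C,o}$. The key estimate is a comparison between $|\ell^{\ast}|$, which is bounded below by $m=\mathrm{mult}(C,o)$ via the initial-ideal description of multiplicity, and the Hilbert-type function appearing in the definition of the weight $w$ in \cite{AgNeIV}; these two estimates together force $w(\ell^{\ast})<0$ as soon as $m\geq 3$. This combinatorial translation of the algebraic notion of multiplicity into quantitative control of the weight function is the technical heart of the argument, and once it is in place the three forward implications, hence the full proposition, follow.
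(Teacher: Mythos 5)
Your overall skeleton matches the paper's: reduce to the three forward implications via the trichotomy $m=1$, $m=2$, $m>2$; handle $m=1$ directly; use the Abhyankar/embedding-dimension bound to reduce $m=2$ to the plane germs $x^2+y^k$ and compute; and for $m\geq 3$ exhibit a lattice point of negative weight. The problem is that the one step you yourself call ``the technical heart'' of part (c) is exactly the step you do not supply, and the estimates you sketch would not by themselves deliver it. A lower bound $|\ell^{\ast}|\geq m$ is useless on its own, since $w_0(\ell)=2\hh(\ell)-|\ell|$ and $\hh$ also grows; what you need is a point where $\hh$ is \emph{pinned down exactly}. The paper's witness is simply the multiplicity vector $\mm=(m_1,\dots,m_r)$, the valuation vector of a \emph{single} generic linear form (not of a minimal generating system of $\mathfrak m_{C,o}$, as you suggest). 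The decisive observation is that $\mm$ is the smallest nonzero element of $\mathcal S_{C,o}$: every non-unit $g$ satisfies $\vv_i(g)\geq m_i$ for all $i$, so $\mathcal S_{C,o}\setminus R(\mm,\infty)=\{\mathbf 0\}$, whence by the semigroup-to-Hilbert-function dictionary $\hh(\mm)=1$ and $w_0(\mm)=2-m(C,o)$ exactly. This single identity settles (c) for $m\geq 3$ and simultaneously gives the $m=2$ borderline $w_0(\mm)=0$. Without identifying this witness and proving $\hh(\mm)=1$, your part (c) is a restatement of the goal rather than a proof.

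Two smaller points. First, your claim that the three left-hand conditions are ``mutually exclusive'' so that the forward implications suffice is not purely formal: conditions (a) and (c) are only incompatible because for a singular germ $\{\mathbf 0\}$ is an isolated connected component of $S_0$ (since $w_0(e_i)=1>0=w_0(\mathbf 0)$ by Remark \ref{rem:h|R(0,E)}), so any point of negative weight forces $S_0$ to be disconnected; this observation (Remark \ref{rem:0locmin} in the paper) should be stated, and it also gives $\bH^0_{\rm red}\neq 0$ in case (b) more directly than your Euler-characteristic detour. Second, in (b) your parity argument needs $\bH^{\geq 2}=0$ (which holds since $r\leq 2$ here) to conclude that the nonvanishing lands in degree $0$; worth making explicit.
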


The simplest germs with $\bH^0_{\rm red}\not =0$ (that is, when
 $\bH^0_{\rm red}=\bH^0_{{\rm red},0}\simeq \bZ$ and  $\bH^{\geq 1}=0$) are characterized in 
 Remark  \ref{rem:0locmin}.

 Regarding case {\it (c)} we have the next  characterizations of the Gorenstein property  and of $mult(C,o)$.

 \begin{theorem}\label{thm:mult-intr} If $mult(C,o)>2$ then the following hold.

\noindent (a) (Theorem \ref{th:Gorchar}) $(C,o)$ is Gorenstein if and only if
    $\text{\normalfont rank}_\bZ \ker\big( U : \bH^0_0(C,o) \to \bH^0_{-2}(C,o) \big) \geq 2$.

\noindent (b) (Theorem \ref{th:MF}) For plane curves
$mult(C,o)=2-\max\{n\,|\,  {\rm ker} (U:\bH^0_{2n}\to \bH^0_{2n-2}) \not =0,\ n<0\}.$
\end{theorem}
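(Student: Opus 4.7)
The proofs of both parts rest on the concrete description of $\bH^0$ in terms of the connected components of the filtered spaces $S_n$: one has $\bH^0_{2n}=H^0(S_n;\bZ)$, which is the free $\bZ$-module on $\pi_0(S_n)$, and $\ker(U:\bH^0_{2n}\to\bH^0_{2n-2})$ is freely generated by those connected components of $S_n$ that are disjoint from $S_{n-1}$. By Theorem \ref{thm:intr-nonpos}, each $S_n$ with $n\geq 1$ is contractible, so in every $S_n$ with $n\leq 0$ there is a distinguished ``main'' component descending from this contractible limit; all additional kernel classes measure how far the filtration has broken up away from it.

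For part (a), the strategy is to use Delgado's semigroup-theoretic criterion: a reduced curve $(C,o)$ is Gorenstein if and only if its value semigroup $S\subset\bN^r$ is symmetric with respect to a distinguished element $\kappa\in\bZ^r$. The key lifting step is to show that this symmetry promotes to an involution $\sigma:\ell\mapsto\kappa-\ell$ of $\bZ^r$ under which the underlying weight function is invariant, $w_0\circ\sigma=w_0$, so that $\sigma$ acts by cellular self-homeomorphisms on every $S_n$. Assuming $mult(C,o)>2$ so that $S_{-1}\ne\emptyset$ by Proposition \ref{thm:intr-class}(c), I would use $\sigma$ to pair the main component of $S_0$ with a $\sigma$-image component, and then combine both with an ``isolated dual'' component near the symmetry axis to produce at least two components of $S_0$ disjoint from $S_{-1}$, hence $\operatorname{rank}\ker U\geq 2$. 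The converse is the more delicate direction: given two independent kernel classes at weight zero, one has two distinct components of $S_0$ avoiding $S_{-1}$, and the plan is to reconstruct $\sigma$ from their combinatorial positions and propagate it to $S$ using that the weight function is determined by the semigroup. The main obstacle is precisely this propagation, which requires ruling out that the extra component arises from accidental local data unrelated to a global symmetry; I expect to use stability of the main component under $U^\infty$ to pin down $\kappa$.

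For part (b), the plan is to invoke the explicit combinatorics of plane curve value semigroups. For a plane curve with branches of multiplicities $m_1,\dots,m_r$, one has $m=mult(C,o)=m_1+\cdots+m_r$, realized by the lattice point $\ell^{\star}=(m_1,\dots,m_r)\in\bZ^r$. The idea is to establish two claims. First, $\ell^{\star}$ is a strict local minimum of $w_0$ with $w_0(\ell^{\star})=2-m$ and no cube containing $\ell^{\star}$ of weight $\leq 1-m$; this shows $\{\ell^{\star}\}$ forms a component of $S_{2-m}$ disjoint from $S_{1-m}$, producing a nontrivial kernel class at $n=2-m$. Second, every component of $S_n$ with $n<2-m$ already meets $S_{n-1}$, so no new components appear deeper. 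The first claim reduces to the minimality of $m$ among coordinate sums of nonzero semigroup elements; the second --- the harder step and the main obstacle --- requires leveraging the planar structure (via the Newton polygon, or equivalently the Enriques diagram) to produce, from any lattice point $\ell$ of weight $n<2-m$, an adjacent cube of strictly smaller weight. I would reduce by induction on the number of branches to the irreducible case, where the argument becomes a direct analysis of the semigroup gaps and the specific form of $w_0$ along the unique coordinate axis.
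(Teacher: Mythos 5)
Your proposal has genuine gaps in both parts. For part (a), the forward direction is essentially the paper's argument: the Gorenstein symmetry $w(\mathbf{c}-\ell)=w(\ell)$ of Theorem \ref{th:Gorchar}\textit{(4)} makes $\mathbf{c}$ a second local minimum of weight $0$ besides $\mathbf{0}$, and each weight-$0$ local minimum is a singleton component of $S_0$ disjoint from $S_{-1}$, giving ${\rm rank}_\bZ\ker U\geq 2$. But for the converse you propose to ``reconstruct $\sigma$ from the combinatorial positions'' of the two components and ``propagate it'' to the semigroup, and you flag this propagation as the main obstacle without resolving it \--- so as written the converse is a gap. The missing idea is that no global symmetry needs to be reconstructed: it suffices to prove that, once $m(C,o)\geq 3$, any nonzero local minimum $p$ with $w(p)=0$ must \emph{equal} $\mathbf{c}$ (Theorem \ref{lem:w=0glm}\textit{(b)}). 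This is done locally: the path-counting formula of Proposition \ref{cl:glm} forces $w$ to be symmetric on $R(\mathbf{0},p)$ for any weight-$0$ generalized local minimum; that symmetry plus the structure of $\mathcal{S}_{C,o}$ near $\mathbf{m}$ yields an infinite increasing path of semigroup elements starting at $p$ (here $m\geq 3$ is used to exclude $p+\mathbf{m}$ being a local minimum of positive weight), and Lemma \ref{lem:chcond} then gives $p\geq\mathbf{c}$. Combined with Corollary \ref{cor:kisebb} this pins $p=\mathbf{c}$, hence $w(\mathbf{c})=0$, hence Gorenstein.

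For part (b) the plan bounds the wrong side. The formula requires identifying the \emph{maximum} negative weight at which $\ker U\neq 0$, i.e.\ one must exclude local minima with weight strictly between $2-m$ and $0$. Your first claim (that $\mathbf{m}$ gives a kernel class at $n=2-m$) is correct, but your second claim \--- that no new components appear at weights $n<2-m$ \--- addresses the opposite side of the interval and is moreover false: for the sextic of Example \ref{ex:nagy} one has $w(\mathbf{m})=-4$ while local minima of weight $-5$ and $-6$ exist. Even if both of your claims held, they would identify $2-m$ as the minimum, not the maximum, of the relevant weight set. The real content of Theorem \ref{th:MF} is the inequality $w(p)\leq w(\mathbf{m})$ for every local minimum $p\notin\{\mathbf{0},\mathbf{c}\}$, and proving it uses machinery absent from your outline: the Gorenstein symmetry of plane curves together with the dual-path argument of Proposition \ref{tavolimult} disposes of the zone $2\mathbf{m}\leq p\leq \mathbf{c}-2\mathbf{m}$; the remaining local minima are valuations $\mathfrak{v}(L)$ of linear forms $L\in\mathfrak{m}_{C,o}\setminus\mathfrak{m}_{C,o}^2$, and the induction on the number of branches only closes because Proposition \ref{th:Lcond} guarantees that the restricted point is never the conductor of the smaller plane curve. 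The failure of MF for the decomposable germ $C_{3,4}\vee C_{3,4}$ (Example \ref{ex:decfail}) shows that a purely combinatorial branch-removal induction of the kind you sketch cannot work without this planar analytic input.
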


Part {\it (b)} is a generalization of \cite{KNS1}, where the identity 
was proved for irreducible 
plane germs. 

The main theorems (Theorem \ref{th:int1}, Theorem \ref{thm:mult-intr} {\it (a)} and
{\it (b)}) connect three geometric properties  with three different weight--zones of $\bH^*$: 
cohomological vanishing for $n>0$, Gorenstein property determined at weight level $n=0$  and multiplicity formula 
extracted from the $n<0$ zone. 


The main tool of the note is the {\it weight function} of $(C,o)$ (see (\ref{eq:w0})).
  Not only that it is closely related with the classical invariants (it can be defined either from the   Hilbert function or from the semigroup of values, see subsection \ref{ss:2.2}), 
  but it is the new ingredient which leads to the definition of the lattice cohomology too.
  Basically, its level sets and critical points determine a new type of geometry of the singularity. The local minima of this weight function impose all
the results of the note. 
For a pictorial summary see Example \ref{ex:nagy}.

\subsection{} The structure of the paper is the following. In section 2 we recall the definition of the lattice cohomology and several other invariants of curve singularities (Hilbert function associated with normalization, semigroup of values, delta invariant, conductor, multiplicity)
 and we recall/establish certain properties of them. We introduce the weight function, whose level sets and critical points will play a crucial role in the subject. In section 3 we study the properties of the (generalized) local minima of the weight function and the corresponding local minimum values. 
 In section 4 we characterize the  Gorenstein curve germs by  several equivalent properties, one of them is  in terms of $\bH^0$. In section 5 we prove the Multiplicity Formula for any plane curve singularity.
 We provide several examples showing the subtlety of the
 problem in the non-planar case. 
 Finally, section 6 contains the proof of the contractibility of the spaces $\{S_n\}_{n>0}$.

\section{Invariants of a reduced  curve singularity} \label{s:Prem1}

In this section we give an overview of the \emph{lattice cohomology} and we also 
recall some classical
invariants of reduced  curve singularities.  We organize them
(by adding certain new properties too) as a preparatory part for the next sections. For more details about lattice cohomology  see \cite{AgNeIV,KNS1,NFilt}. For other versions  of lattice cohomologies in singularity theory see \cite{AgNeI,AgNeHigh,Nlattice,NBook}. For the Floer-theoretic interpretations 
in low-dimensional topology see e.g. \cite{GorNem2015,NFilt}.

\subsection{Definition of the lattice cohomology}\cite{AgNeIV} \label{latticedef} Let $(C,o)$ be a reduced  curve singularity with irreducible decomposition $(C,o) = \bigcup_{i\in\mathcal{I}}(C_i,o)$, where $\mathcal{I}=\{1,\dots,r\}$. The local algebra of $(C,o)$ and of a branch $(C_i,o)$ will be denoted by $\mathcal{O}=\mathcal{O}_{C,o}$ and $\mathcal{O}_i = \mathcal{O}_{C_i,o}$ respectively. Let $n_i : (\bC,0) \to (C_i,o)$ be the normalization map inducing the inclusion of rings $n_i^* : \mathcal{O}_i \hookrightarrow \overline{\mathcal{O}_i} = \bC\{t_i\}$ where $\overline{\mathcal{O}_i}$ denotes the integral closure of $\mathcal{O}_i$. Denote  the corresponding discrete valuations by
$$\vv_i :  \mathcal{O}\longrightarrow  \mathcal{O}_i \longrightarrow \bZ_{\geq 0}\cup \{ \infty \} \ \ \ ; \ \ \ g \longmapsto \text{ord}_{t_i}(n_i^*g).$$
They induce the $\bZ_{\geq 0}^r$-filtration $\mathcal{F}$ on $\mathcal{O}$ given by $\mathcal{F}(\ell) = \{ \, g \in \mathcal{O} \ | \ \vv_i(g) \geq \ell_i \text{ for all } i \in \mathcal{I} \, \}$, where $\ell = (\ell_1, \dots, \ell_r) \in \bZ_{\geq 0}^r$. Let  $\hh : \bZ_{\geq 0}^r \to \mathbb{Z}_{\geq 0}$ denote the Hilbert function given by the (finite) codimension $\hh(\ell) = \dim_\bC \mathcal{O} / \mathcal{F}(\ell)$ (see also \cite{CDG2}).
By definition, $\hh(\ell)\geq \hh(\ell')$ whenever $\ell\geq \ell'$.
Define the \emph{weight function} $w_0 : \bZ_{\geq 0}^r \to \bZ$ as
\begin{equation}\label{eq:w0}
w_0(\ell) = 2\cdot\hh(\ell) - |\ell|,\end{equation}
where $|\ell |= \sum_{i=1}^r\ell_i$. It carries the same information as $\hh$ (cf. (\ref{eq:hfromS}) and  (\ref{eq:w0valtozasa})) but via its level sets 
it describes and characterizes
more naturally several geometric phenomena.

The positive orthant $\bR_{\geq 0}^r $ has a natural decomposition into cubes. The
set of zero-dimensional cubes consists of the lattice points
$\bZ_{\geq 0}^r$. Any $\ell\in \bZ_{\geq 0}^r$ and subset
$I\subset \mathcal{I}$ of
cardinality $q$  defines a closed $q$-dimensional cube $(\ell, I)$, having as its
vertices the lattice points $\ell+\sum_{i\in K}e_i$, where
$K$ runs over all subsets of $I$, and 
 $e_i = (0,\dots,0,1,0,\dots,0) \in \bZ^r_{\geq 0}$ denotes the $i^{\text{th}}$ standard basis vector.
For closed cubes $(\ell, I)$ we use the notation $\square$ as well, open cubes will be denoted by $\square^{\circ}$.

We define the weight of a 
  $q$-cube ($q\geq 0$) by 
\begin{equation}\label{eq:w_qdef}
    w_q(\square) = \max\{\, w_0(\ell) \ | \ \ell \in \square \cap \bZ^r \, \} \ \
    \ \ (\text{for any $q$-cube $\square$}),
\end{equation}
and  the `sublevel sets' $S_n$ of the weight function by 
%
\begin{equation*}
    S_n = \bigcup\{ \, \square \ | \ \square \ \mbox{is a $q$-cube for any $q\geq 0$ with}
    \ w_q(\square) \leq n \, \}.
\end{equation*}

 
 It turns out that $w_0$ is bounded from below and $\{\ell\in \bZ_{\geq 0}^r\ |\ w_0(\ell)\leq n\} $
is finite for every $n \in \bZ$. Hence $S_n$ is
 a finite closed cubical complex and  it is empty  for $n\ll 0$. 
 Moreover $S_n  \subset S_{n+1}$. 

\begin{define}
    The lattice cohomology of the reduced 
    curve singularity $(C,o)$ is defined to be
    $$\bH^*(C,o) = \bigoplus_{n \in \bZ}H^*(S_n,\bZ).$$
    It is a $(\bZ \times 2\bZ)$-graded $\bZ[U]$-module. The first (homological) grading is $\bH^*=\oplus _q\bH^q$, while the second (weight) grading $\bH^q=\oplus_{n}\bH^q_{2n}$ is defined as $\bH^q_{2n}(C,o) = H^q(S_n,\bZ)$. The  $U$--action is the  morphism  $H^q(S_{n+1},\bZ) \to H^q(S_{n},\bZ)$ induced by the inclusion. It is homogeneous of degree $(0,-2)$.

    The \emph{reduced lattice cohomology} $\bH^*_{\text{red}}(C,o)$ is also defined by setting $\bH^q_{\text{red}, \, 2n}(C,o) = \widetilde{H}^q(S_n,\bZ)$,  where $\widetilde{H}^*$ denotes reduced singular cohomology.
\end{define}

\begin{rem}\label{rem:lhom}   (a)
    We prefer the convention of using this $(\bZ \times 2\bZ)$-grading instead of the $\bZ^2$-grading in order to keep the compatibility with the Heegaard-Floer (Link) theory, see \cite{NFilt}.

 (b)   By another construction, $\bH^*(C,o)$ arises as the homology of a  cochain complex associated with the weight function and the  cubical decomposition described 
 above, see \cite{AgNeIV}.

(c) From the very construction it follows that $\bH^{\geq r}(C,o)=0$.

 (d)   In the case of a complex {\it plane} curve singularity $(C,o) \subset (\bC^2,0)$, the Hilbert function can be recovered from the multivariate Alexander polynomial of the link $C\cap S^3_{\varepsilon} \subset S^3_\varepsilon $ \cite{GorNem2015} (see also \cite{yamamoto}). It follows that in this planar  case $\bH^*(C,o)$ is an embedded topological invariant.
\end{rem}

\begin{define}\label{bek:gradedroot}
The graded $\bZ[U]$-module $\bH^0(C,o)$ has an improvement, which is also a convenient pictorial presentation.
It is the {\it graded root} $\mathfrak{R}(C,o)$, a $\bZ$-graded connected graph. The vertices $\mathfrak{V}$ are $\bZ$-graded,  $\mathfrak{V}=\sqcup _n
\mathfrak{V}_n$, the vertices  $\{v_{n,i}\}_i\in \mathfrak{V}_n$ correspond bijectively to the connected components $\{S_{n,i}\}_i$ of $S_n$. The edges are defined as follows. If two components $S_{n,i}$ and $S_{n+1,j}$
of $S_n$ and $S_{n+1}$  satisfy $S_{n,i}\subset S_{n+1,j}$ then we connect $v_{n,i}$ and $v_{n+1,j}$ by an edge. 
From $\mathfrak{R}(C,o)$ one can read  $\bH^0(C,o)$: each vertex of degree $n$
generates a free $\bZ$-summand in $\bH^0_{2n}=H^0(S_n,\bZ)\cong\oplus_i H^0(S_{n, i},\bZ)$, and the $U$-action is marked by the edges.
For more see e.g. \cite{NGr,Nlattice,NBook}. 
\end{define}

\subsection{Connection to the semigroup of values}\label{ss:2.2} \cite{AgNeIV,delaMata87,delaMata,Garcia} The \emph{semigroup of values} of the reduced 
 curve singularity $(C,o) = \bigcup_{i\in\mathcal{I}}(C_i,o)$ is defined to be
\begin{equation}\label{eq:S_Cdef}
    \mathcal{S}_{C,o} = \big\{ \, \big(\vv_1(g), \dots, \vv_r(g)\big)  \in \bZ_{\geq 0}^r \ \big| \ g \in \mathcal{O} \text{ is not a zero-divisor} \, \big\}.
\end{equation}
 The Hilbert function $\hh$ determines the semigroup $\mathcal{S}_{C,o}$ as follows:
\begin{equation}\label{eq:Sfromh}
  \mathcal{S}_{C,o}=\{\, \ell\ \in \bZ_{\geq 0}^r \ | \  \hh(\ell+e_i)>\hh(\ell) \text{ for all } i \in \mathcal{I} \, \}.
 \end{equation}

\begin{define}\label{def:Delta} \cite{delaMata87,delaMata,Garcia}
    Given the semigroup of values $\mathcal{S}_{C,o} \subset \bZ^r_{ \geq 0}$ 
    and $\ell \in \bZ^r$
    we set
    $$\Delta_i(\ell) := \{\, s \in \mathcal{S}_{C,o} \ | \ s_i=\ell_i  \text{ and } s_j > \ell_j \text{ for all } j \not= i \, \}$$
    and $\Delta(\ell) := \bigcup_{i \in \mathcal{I}} \Delta_i(\ell).$
    Similarly, let us define the `closures' of the above sets as
    $$\overline{\Delta}_i(\ell) := \{\, s \in \mathcal{S}_{C,o} \ | \ s_i= \ell_i \text{ and } s_j \geq \ell_j \text{ for all } j \not= i\, \}$$
    and $\overline{\Delta}(\ell) := \bigcup_{i \in \mathcal{I}} \overline{\Delta}_i(\ell).$
\end{define}
Conversely to (\ref{eq:Sfromh}), the semigroup $\mathcal{S}_{C,o}$
also determines the Hilbert function $\ell\mapsto \hh(\ell)$ as follows:
\begin{equation}\label{eq:hfromS}
\hh(\mathbf{0})=0 \ \ \ \text{and} \ \ \ \hh(\ell+e_i) - \hh(\ell) = \begin{cases}
    \ 1 \ \text{ if and only if } \ \overline{\Delta}_i(\ell) \not= \emptyset  \\
    \ 0 \ \text{ if and only if } \ \overline{\Delta}_i(\ell) = \emptyset
\end{cases}
  \end{equation}
for all $i \in \mathcal{I}$ and $\ell\geq \mathbf{0}$. In particular, if $(C,o)$ is irreducible, then $ \mathfrak{h}(\ell) = \#\{ \, s < \ell\ | \ s \in \mathcal{S}_{C,o} \, \}$.

One may reformulate these facts using the weight function $w_0 : \bZ^r_{\geq 0} \to \bZ$ as follows:

\begin{cor}\label{cor:wfromS} Let $w_0$ be the weight function defined
in (\ref{eq:w0}). Then
    $w_0(\mathbf{0}) = 0$ and
    \begin{equation}\label{eq:w0valtozasa}
w_0(\ell+e_i) - w_0(\ell) = \begin{cases}
    \ +1 \ \text{ if and only if } \ \overline{\Delta}_i(\ell) \not= \emptyset  \\
    \ -1 \ \text{ if and only if } \ \overline{\Delta}_i(\ell) = \emptyset
\end{cases}
  \end{equation}
  for all $i \in \mathcal{I}$  and $\ell\geq \mathbf{0}$. In particular, if $(C,o)$ is an irreducible curve singularity, then
  \begin{equation}\label{eq:w_0irredre}
      w_0(\ell) = \#\{ \, s < \ell \ | \ s \in \mathcal{S}_{C,o} \, \} - \#\{ \, 0 \leq k < \ell \ | \ k \not\in \mathcal{S}_{C,o} \, \} \ \ \ \mbox{ for all $\ell \in \bZ_{\geq 0}$.}
  \end{equation}
\end{cor}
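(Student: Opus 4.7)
The plan is to derive everything directly from the definition $w_0(\ell)=2\hh(\ell)-|\ell|$ together with the recursive description of $\hh$ given in (\ref{eq:hfromS}), which is the only nontrivial ingredient and is already cited from the literature. Since this ingredient does all the real work, the corollary itself is essentially a two-line computation plus a telescoping argument in the irreducible case.

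First I would note that $\mathcal{F}(\mathbf{0})=\mathcal{O}$, so $\hh(\mathbf{0})=\dim_\bC\mathcal{O}/\mathcal{O}=0$, whence $w_0(\mathbf{0})=2\hh(\mathbf{0})-0=0$. Next, for any $\ell\geq\mathbf{0}$ and any $i\in\mathcal{I}$, the definition of $w_0$ gives
\begin{equation*}
w_0(\ell+e_i)-w_0(\ell)\;=\;2\bigl(\hh(\ell+e_i)-\hh(\ell)\bigr)-\bigl(|\ell+e_i|-|\ell|\bigr)\;=\;2\bigl(\hh(\ell+e_i)-\hh(\ell)\bigr)-1.
\end{equation*}
By (\ref{eq:hfromS}) the bracketed difference equals $1$ precisely when $\overline{\Delta}_i(\ell)\neq\emptyset$ and $0$ otherwise; substituting these two values produces the announced $\pm 1$ dichotomy. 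This yields the main statement.

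For the irreducible case ($r=1$), I would specialize: here $\overline{\Delta}_1(\ell)$ consists of those $s\in\calS_{C,o}$ with $s=\ell$, so $\overline{\Delta}_1(\ell)\neq\emptyset$ iff $\ell\in\calS_{C,o}$. The formula from the first part then reads
\begin{equation*}
w_0(k+1)-w_0(k)\;=\;\begin{cases}+1 & \text{if } k\in\calS_{C,o},\\ -1 & \text{if } k\notin\calS_{C,o}.\end{cases}
\end{equation*}
Starting from $w_0(0)=0$ and telescoping from $k=0$ to $k=\ell-1$ gives
\begin{equation*}
w_0(\ell)\;=\;\#\{0\leq k<\ell\,|\,k\in\calS_{C,o}\}-\#\{0\leq k<\ell\,|\,k\notin\calS_{C,o}\},
\end{equation*}
which is exactly (\ref{eq:w_0irredre}) after renaming $k\mapsto s$ in the first set.

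I do not expect a genuine obstacle: both parts reduce to unwrapping definitions, with the substantive input being the known recursion (\ref{eq:hfromS}) for the Hilbert function in terms of $\overline{\Delta}_i$. The only point worth a brief sentence is the verification that $\hh(\mathbf{0})=0$, which is immediate from $\mathcal{F}(\mathbf{0})=\mathcal{O}$.
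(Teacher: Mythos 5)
Your proof is correct and matches the paper's intended derivation: the corollary is stated there without proof precisely because it is the immediate translation of (\ref{eq:hfromS}) through the definition $w_0(\ell)=2\hh(\ell)-|\ell|$, which is exactly what you carry out, and the telescoping argument in the $r=1$ case is the standard way to obtain (\ref{eq:w_0irredre}). Nothing is missing.
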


\begin{nota}\label{def:rect}
    Given two lattice points $a,b \in \bZ^r$, let $$R(a,b):=\{ \, x \in \bR^r \ | \ a_i \leq x_i \leq b_i \text{ for all } i\in\mathcal{I} \, \}.$$
\end{nota}

\begin{rem} \label{rem:h|R(0,E)}
    Let $\mathbf{1} := \sum_{i=1}^re_i$. If $s\in \mathcal{S}_{C, o}\setminus \{  \mathbf{0}\}$ then necessarily $s\geq \mathbf{1}$ (compare also with \ref{rem:semigroup}).
    Hence $\mathfrak{h}(\ell) = 1 $ for all $ \ell \in R(\mathbf{0}, \mathbf{1}) \cap \mathbb{Z}^{r} \setminus \{ \mathbf{0}\}$.
\end{rem}

\subsection{The conductor and the $\delta$-invariant}\label{ss:cond} \cite{Zar} Assume that $(C,o)$ is not smooth. Let ${\mathfrak c}=(\mathcal{O}:\overline{\mathcal{O}})$ be the {\it conductor ideal} of the normalization $\overline{\mathcal{O}}=\bigoplus_{i=1}^r\bC\{t_i\}$ of $\mathcal{O}$, i.e. the largest (proper) ideal of $\mathcal{O}$ that is also an ideal of $\overline{\mathcal{O}}$. It has the form
$(t_1^{c_1}, \ldots, t_r^{c_r})\overline{\mathcal{O}} $. The lattice point $\mathbf{c}=(c_1,\ldots , c_r) \in \bZ_{\geq 0}^r$ is called the
{\it conductor} of $\mathcal{S}_{C,o}$. It is the smallest lattice point (with respect to the coordinate-wise partial ordering) with the property $\mathbf{c}+\mathbb{Z}_{\geq 0}^r\subset \mathcal{S}_{C,o}$.
 In particular,  $\mathcal{F}(\mathbf{c}) = \mathfrak{c}$ (for $\mathcal{F}$ see \ref{latticedef}). 

\begin{define} The {\it delta invariant } of $(C,o)$ is defined as $\delta(C,o):=\dim_{\bC}\overline{\mathcal{O}_{C,o}}/\mathcal{O}_{C,o}$. 
\end{define}
     The delta invariant  is zero if and only if $(C,o)$ is smooth.
    It is connected to the conductor via the following facts: $\dim \overline{\mathcal{O}}/\mathfrak{c}=|\mathbf{c}|$
    and  $\dim \mathcal{O}/\mathfrak{c}=\hh(\mathbf{c})$, hence
    $\delta(C,o)=|\mathbf{c}|-\hh(\mathbf{c})$.


\begin{theorem}\label{th:EUcurves} \cite{AgNeIV} For any lattice point $\ell\geq \mathbf{c}$ the inclusion $S_n\cap R(\mathbf{0},\ell)\hookrightarrow S_n$ is a homotopy equivalence. In particular, $S_n$ is empty for $n \ll 0$ and contractible for $n
\geq \max \{w_0 |_{R(\mathbf{0}, \ccc )}\}$. 

Therefore, for any lattice point $\ell \geq \mathbf{c}$ there exists a canonical bigraded $\bZ[U]$-module isomorphism between $\bH^*(C,o)$ and $\bigoplus_{n \in \bZ} H^*\big(S_n \cap R(\mathbf{0},\ell),\bZ\big)$.
(Similarly, $\mathfrak{R}(C,o)$ can also be reconstructed from the connected components of
$\{S_n \cap R(\mathbf{0},\ell)\}_n$ via the usual construction from \ref{bek:gradedroot}.)
\end{theorem}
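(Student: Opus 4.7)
I would first establish a \emph{monotonicity-beyond-the-conductor} lemma: for any $v \in \bZ_{\geq 0}^r$ and any $i \in \mathcal{I}$ with $v_i \geq c_i$, one has $w_0(v+e_i) = w_0(v) + 1$. This will be immediate from (\ref{eq:w0valtozasa}), since $s := v + \sum_{j \neq i} c_j e_j$ is $\geq \mathbf{c}$ (hence lies in $\mathcal{S}_{C,o}$) and witnesses $\overline{\Delta}_i(v) \neq \emptyset$.

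The first assertion will then follow by induction on the coordinates and on the maximum level, once I show that for every $i \in \mathcal{I}$ and every $k \geq c_i$ the straight-line homotopy
\begin{equation*}
H_t(x) := x - t\,\max(0,\, x_i - k)\, e_i, \qquad t \in [0,1],
\end{equation*}
realises a strong deformation retraction of $S_n \cap \{x_i \leq k+1\}$ onto $S_n \cap \{x_i \leq k\}$. The only nontrivial check is that the segment $\{H_t(x) : t \in [0,1]\}$ stays in $S_n$ whenever $x_i \in (k, k+1]$. Let $(a, I)^\circ$ be the open cube containing $x$; its closure lies in $S_n$. A short integrality check splits the verification into two cases. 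If $x_i < k+1$ then forcibly $i \in I$ and $a_i = k$, so the entire segment remains inside $(a, I)$. If $x_i = k+1$ then $i \notin I$ and $a_i = k+1$, and I would introduce the enlarged cube $\hat\square := (a - e_i,\, I \cup \{i\})$; the monotonicity lemma, applied vertex by vertex on its low face $(a - e_i, I)$ (whose $i$-th coordinate is $k \geq c_i$), shows that the weights on the low and high faces of $\hat\square$ differ by exactly $1$. Hence $w(\hat\square) = w((a, I)) \leq n$, placing $\hat\square$ in $S_n$ and containing the entire segment.

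Iterating this homotopy in $k$ from some sufficiently large value down to $\ell_i$, and then over $i \in \mathcal{I}$, delivers the retraction of $S_n$ onto $S_n \cap R(\mathbf{0}, \ell)$ for any $\ell \geq \mathbf{c}$. Emptiness of $S_n$ for $n \ll 0$ is just the boundedness of $w_0$ from below. For $n \geq \max\{w_0|_{R(\mathbf{0}, \mathbf{c})}\}$, every cube in $R(\mathbf{0}, \mathbf{c})$ has all vertex weights $\leq n$, so $R(\mathbf{0}, \mathbf{c}) \subset S_n$; specialising to $\ell = \mathbf{c}$ gives $S_n \cap R(\mathbf{0}, \mathbf{c}) = R(\mathbf{0}, \mathbf{c})$, which is a rectangle and thus contractible, so $S_n$ is contractible. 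Finally, since $H_t$ does not depend on $n$, the retractions at consecutive levels commute with the inclusions $S_n \hookrightarrow S_{n+1}$; this upgrades the cohomology isomorphisms to the claimed isomorphism of bigraded $\bZ[U]$-modules, and the induced bijection on connected components reconstructs the graded root as described in \ref{bek:gradedroot}.

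The crux is the case $x_i = k+1$ above — controlling $H_t$ on a face of codimension at least one transverse to $e_i$ — which is exactly where the monotonicity-beyond-the-conductor lemma must be invoked to produce the enlarged cube $\hat\square$. Once that lemma is in hand the rest is routine cubical-complex bookkeeping.
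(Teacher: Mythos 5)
The paper itself does not prove Theorem \ref{th:EUcurves}: it is quoted from \cite{AgNeIV}, so there is no in-text argument to measure yours against. Your proof is correct and self-contained. The monotonicity lemma is right: for $v_i\geq c_i$ the element $s=v+\sum_{j\neq i}c_je_j$ lies in $\mathbf{c}+\bZ^r_{\geq 0}\subset\mathcal{S}_{C,o}$ and belongs to $\overline{\Delta}_i(v)$, so (\ref{eq:w0valtozasa}) gives $w_0(v+e_i)=w_0(v)+1$. The slab-by-slab straight-line retraction is the natural device here, and you isolated the only delicate point correctly: a point $x$ with $x_i=k+1$ lies in an open cube $(a,I)^\circ$ with $i\notin I$ and $a_i=k+1$, the segment from $x$ to $x-e_i$ lives in the enlarged cube $\hat{\square}=(a-e_i, I\cup\{i\})$, and the low-face vertices of $\hat{\square}$ have $i$-th coordinate $k\geq c_i$, hence weight one less than their high-face partners; therefore $w(\hat{\square})=w((a,I))\leq n$ and $\hat{\square}\subset S_n$. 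Two minor remarks. First, the downward iteration in $k$ needs a starting point, i.e.\ the boundedness of $S_n$; this is supplied by the paper's observation that $\{\ell\mid w_0(\ell)\leq n\}$ is finite. Second, for the $\bZ[U]$-module statement you do not actually need the retractions at consecutive weight levels to be compatible: since your isomorphisms $H^*(S_n)\to H^*(S_n\cap R(\mathbf{0},\ell))$ are induced by inclusions, they automatically commute with the maps induced by $S_n\hookrightarrow S_{n+1}$ via the commutative square of inclusions, which is all that is required for the $U$-action (and likewise for the component-wise reconstruction of the graded root).
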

\begin{remark}  \label{th:EuChar} (a)
By Theorem \ref{th:EUcurves} 
each $\bH^q_{\text{\normalfont red} }(C,o)$ has finite $\bZ$-rank.
Recall also that $\bH^q_{\text{\normalfont red} }(C,o)=0$ for $q\geq r$. 
This allows us to define the Euler characteristic ${\rm eu}(\bH^*)$ of $\bH^*$ (see (\ref{eq:intro})). 

(b) By \cite{AgNeIV} ${\rm eu}(\bH^*)=\delta(C,o)$.



(c) According to  Theorem \ref{th:Main2},
if $\bH^{\geq 1}=0$ then ${\rm eu}(\bH^*)=\delta$ can be computed from the graded root $\mathfrak{R}$  as follows (this can be useful in the examples presented below).
 Delete all the vertices $v$ of the root 
with $w_0(v)>0$.  Then ${\rm eu}(\bH^*)+1$ is the number of remaining vertices.
\end{remark}

\begin{example}\label{ex:decomp} ({\bf Decomposable germs})
A curve singularity $(C,o)$ is called `decomposable' (into
$(C',o)$ and $(C'',o)$),
 if it is isomorphic to the one-point union in
$(\bC^n\times \bC^m,(0,0))$ of  $(C',o)\times \{o\}\subset (\bC^n,0)\times \{o\}$ and
$\{o\}\times (C'',o)\subset \{o\}\times (\bC^m,0)$.
We denote this by $(C,o)=(C',o)\vee (C'',o)$ \cite{Stevens85}.

Let  $r'$ and $r''$ be the number of
irreducible components of  $(C',o)$ and $(C'',o)$. Then, 
by \cite[6.3]{NFilt},
the semigroup of values of a decomposable germ satisfies
\begin{equation}\label{eq:dec1}
\calS_{C,o}=\{(\mathbf{0},\mathbf{0})\}\cup\, \big( (\calS_{C',o}\setminus \{\mathbf{0}\})\times (\calS_{C'',o}\setminus \{\mathbf{0}\} )\big)
\subset
\bZ_{\geq 0}^{r'}\times \bZ_{\geq 0}^{r''}= \bZ_{\geq 0}^{r'+r''},
\end{equation}
hence $\mathbf {c}(C,o)=(\mathbf{c}(C',o), \mathbf{c}(C'',o))$, and
\begin{equation}\label{eq:dec3}
 w^C_0(\ell',\ell'')=\left\{
 \begin{array}{ll}
        w_0^{C'}(\ell') & \ \ \mbox{if $\ell''=0$},\\
          w_0^{C''}(\ell'') & \ \ \mbox{if $\ell'=0$},\\
            w_0^{C'}(\ell') +w_0^{C''}(\ell'')-2 & \ \ \mbox{if $\ell'>0$ and $\ell''>0$}.
       \end{array}\right.
\end{equation}

\end{example}

\subsection{Increasing paths and a new  characterization of the conductor}

\begin{define}
    An \textit{increasing path} starting from $a\in \bZ^r_{\geq 0}$
     is a sequence of lattice points $\gamma = \{ x^k\}_{k=0}^t$ such that
      $x^0= a$, $x^{k+1}=x^k+e_{i(k)}$ for some index $i(k) \in \mathcal{I}$.
      If $t$ is finite then we say that the path  connects $a$ with $x^t$ (or it is an increasing path from $a$ to $x^t$).
      If $t$ is infinite then we require for the infinite increasing path
      to be unbounded in every coordinate.
\end{define}

The next Lemma is the converse of the obvious fact that for any $a\geq \mathbf{c}$
there exists an infinite increasing path starting from $a$ and having all its elements in $\mathcal{S}_{C,o}$.

\begin{lemma}\label{lem:chcond} Assume that there exists an infinite increasing path
$\{x^k\}_{k\geq 0}$ such that
$x^k\in \mathcal{S}_{C,o}$ for every $k\geq 0$. Then $x^0\geq \mathbf{c}$.
In particular, 
 the conductor $\ccc$ is the smallest lattice point such that for any $a \geq \ccc$ there exists an infinite increasing path $\gamma \subset \mathcal{S}_{C,o}$ starting at $a$.
\end{lemma}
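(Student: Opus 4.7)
The plan is to use (\ref{eq:w0valtozasa}) along the given path to compute the weight $w_0(x^0)$ exactly, and then propagate the resulting saturation identity from $x^0$ to the whole upper orthant $x^0+\bZ_{\geq 0}^r$.

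First, for each $k\geq 0$ the condition $x^k\in\mathcal{S}_{C,o}$ yields $x^k\in\overline{\Delta}_{i(k)}(x^k)$, hence $\overline{\Delta}_{i(k)}(x^k)\neq\emptyset$. Formula (\ref{eq:w0valtozasa}) then gives $w_0(x^{k+1})=w_0(x^k)+1$ at every step, and inductively $w_0(x^k)=w_0(x^0)+k$. Unboundedness in every coordinate forces $x^k\geq\ccc$ for $k\gg 0$; past the conductor the identity $\mathfrak{h}(\ell)=|\ell|-\delta$ holds (iterate (\ref{eq:hfromS}) from $\ccc$ using that $\overline{\Delta}_i(\ell)\neq\emptyset$ for all $\ell\geq\ccc$, together with $\delta=|\ccc|-\mathfrak{h}(\ccc)$), so $w_0(x^k)=|x^k|-2\delta=|x^0|+k-2\delta$. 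Comparing with the previous formula, $w_0(x^0)=|x^0|-2\delta$.

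Second, the general bound $w_0(\ell)\geq|\ell|-2\delta$ holds on all of $\bZ_{\geq 0}^r$ by monotonicity of $\mathfrak{h}$ applied to the pair $\ell\leq\max(\ell,\ccc)$. On the other hand, along any monotone lattice path from $x^0$ to $y\geq x^0$, (\ref{eq:w0valtozasa}) changes $w_0$ by $\pm 1$ at each step, so $w_0(y)\leq w_0(x^0)+(|y|-|x^0|)=|y|-2\delta$. The two estimates pinch to $w_0(y)=|y|-2\delta$ for every $y\geq x^0$. In particular, for such $y$ and any $i\in\mathcal{I}$, $w_0(y+e_i)-w_0(y)=+1$, so (\ref{eq:w0valtozasa}) gives $\overline{\Delta}_i(y)\neq\emptyset$; letting $i$ run and invoking (\ref{eq:Sfromh}) gives $y\in\mathcal{S}_{C,o}$. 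Thus $x^0+\bZ_{\geq 0}^r\subset\mathcal{S}_{C,o}$, and $x^0\geq\ccc$ by minimality of the conductor.

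The delicate step is the first one, where the combinatorial identity $w_0(x^{k+1})-w_0(x^k)=+1$ must be reconciled with the asymptotic linear value $|\cdot|-2\delta$ beyond the conductor in order to pin $w_0(x^0)$ to the lower envelope. Once this saturation identity is in hand, its propagation to arbitrary $y\geq x^0$ and the conclusion are routine bookkeeping with (\ref{eq:w0valtozasa}) and (\ref{eq:Sfromh}).
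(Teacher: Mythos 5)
Your argument is correct, but it reaches the conclusion by a different route than the paper. The paper's proof stays entirely local and path-combinatorial: for a fixed $\ell\geq x^0$ and index $i$ it compares the given path $\{x^k\}_{k=0}^t$ (along which every step is $w_0$-increasing, so the total gain equals the length $t$) with a second increasing path of the same length between the same endpoints that passes through $\ell$ and $\ell+e_i$; since each step changes $w_0$ by $\pm1$, the second path must also be everywhere increasing, which forces $w_0(\ell+e_i)>w_0(\ell)$ and hence $\ell\in\mathcal{S}_{C,o}$. You instead globalize the same extremality principle through the linear envelope $\ell\mapsto|\ell|-2\delta$: you pin $w_0(x^0)$ to this envelope by matching the $+1$-per-step growth along the path against the exact values $w_0=|\cdot|-2\delta$ beyond $\ccc$, prove the envelope is a global lower bound, and pinch. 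What your version buys is the explicit saturation identity $w_0(y)=|y|-2\delta$ on all of $x^0+\bZ_{\geq 0}^r$, a reusable byproduct; what the paper's version buys is independence from $\delta$ and from the asymptotic formula $\hh(\ell)=|\ell|-\delta$, which keeps it self-contained and immediately transferable to arbitrary good semigroups. One small imprecision: the lower bound $w_0(\ell)\geq|\ell|-2\delta$, equivalently $\hh(\ell)\geq|\ell|-\delta$, does not follow from mere monotonicity of $\hh$ on the pair $\ell\leq\max(\ell,\ccc)$ (that gives an upper bound on $\hh(\ell)$); you need the sharper fact from (\ref{eq:hfromS}) that $\hh$ increases by at most $1$ per unit step, applied along a path from $\ell$ up to $\max(\ell,\ccc)$ where $\hh$ equals $|\cdot|-\delta$. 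Since you invoke exactly this $\pm1$-per-step property in the very next sentence for the upper bound, the gap is cosmetic rather than substantive.
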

\begin{proof}
Fix an arbitrary lattice point $\ell\geq x^0$. We want to prove that $\ell$ is a semigroup element by checking that $w_0(\ell+e_{i})>w_0(\ell)$ for any $i \in \mathcal{I}$ (cf. (\ref{eq:Sfromh})). For any fixed coordinate $i$ we can choose a finite  increasing path
$\{ y^k\}_{k=0}^t$ (where $t$ is sufficiently large  depending on  $\ell$ and $i$)
such that $y^0=x^0$,   $y^t=x^t$, and for some index $0 \leq j < t$ we have $y^j=\ell$ and $y^{j+1}=\ell+ e_i$ (in fact, $j = |\ell-x^0|$ and then $i(j)=i$).
Since  $w_0$ is increasing along $\{ x^k\}_{k=0}^t$ (cf. (\ref{eq:w0valtozasa})), 
$w_0(y^t)-w_0(y^0)=w_0(x^t)-w_0(x^0)=t$. Moreover, since  $\{ y^k\}_{k=0}^t$ has length $t$ too, $w_0$ should be  increasing along it as well, hence $w_0(\ell+e_{i})> w_0(\ell)$.
Thus, we got that for any $\ell \geq x^0$ we have $\ell \in S_{C,o}$ and hence $x^0 \geq \ccc$.
\end{proof}

\subsection{Stability properties}\cite{AgNeIV} 
The fact, that the Hilbert function $\hh$ is defined by a valuative filtration $\mathcal{F}$ (cf. subsection \ref{latticedef}),
implies the \emph{`matroid rank inequality'}:
 \begin{equation}\label{eq:matroid}
 \hh(\ell_1)+\hh(\ell_2)\geq \hh\big(\min\{\ell_1,\ell_2\}\big)+\hh\big(\max\{\ell_1,\ell_2\}\big)
 \end{equation}
for all lattice points $\ell_1, \ell_2 \in \bZ^r_{\geq 0}$. This  can be translated to a   \emph{`stability property'} of $\hh$, which in terms of the 
weight function $w_0 : \bZ^r_{\geq 0} \to \bZ$ reads as follows:
\begin{cor}\label{cor:stabforw0}
For any $\ell, \bar{\ell} \in \bZ^r_{\geq 0}$ such that $\bar{ \ell}\geq 0$ and $i \not\in {\rm supp}\big(\bar{\ell}\big):=\{j\ | \ \bar{\ell}_j\not=0\}$ (i.e. $\bar{\ell}_i=0$)
 \begin{equation}\label{eq:genstability-}
 w_0(\ell+e_i)=w_0(\ell)-1\ \ \Rightarrow\  \ w_0(\ell+\bar{\ell}+e_i)=w_0(\ell+\bar{\ell})-1.
 \end{equation}
Equivalently,
\begin{equation}\label{eq:genstability+}
 w_0(\ell+e_i)=w_0(\ell)+1\ \ \Rightarrow\  \ w_0(\ell-\bar{\ell}+e_i)=w_0(\ell-\bar{\ell})+1.
 \end{equation}
\end{cor}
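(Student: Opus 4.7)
The plan is to translate the statement from $w_0$ into $\mathfrak{h}$, then apply (\ref{eq:matroid}) directly. Because $\mathfrak{h}$ is monotone with $\mathfrak{h}(\ell+e_i)-\mathfrak{h}(\ell)\in\{0,1\}$ and $w_0(\ell)=2\mathfrak{h}(\ell)-|\ell|$, the two values $\pm 1$ are the only ones attained by $w_0(\ell+e_i)-w_0(\ell)$, and the dictionary is
\[
w_0(\ell+e_i)=w_0(\ell)-1 \iff \mathfrak{h}(\ell+e_i)=\mathfrak{h}(\ell),
\]
\[
w_0(\ell+e_i)=w_0(\ell)+1 \iff \mathfrak{h}(\ell+e_i)=\mathfrak{h}(\ell)+1.
\]
So (\ref{eq:genstability-}) becomes: if $\mathfrak{h}(\ell+e_i)=\mathfrak{h}(\ell)$, then $\mathfrak{h}(\ell+\bar{\ell}+e_i)=\mathfrak{h}(\ell+\bar{\ell})$.

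To prove this, I would apply the matroid rank inequality to the pair $\ell_1:=\ell+e_i$ and $\ell_2:=\ell+\bar{\ell}$. Since $\bar{\ell}_i=0$ (the hypothesis $i\notin\mathrm{supp}(\bar\ell)$) and $\bar{\ell}\geq 0$, a coordinate-wise check gives $\min\{\ell_1,\ell_2\}=\ell$ and $\max\{\ell_1,\ell_2\}=\ell+\bar{\ell}+e_i$. Hence (\ref{eq:matroid}) reads
\[
\mathfrak{h}(\ell+e_i)+\mathfrak{h}(\ell+\bar{\ell})\geq \mathfrak{h}(\ell)+\mathfrak{h}(\ell+\bar{\ell}+e_i),
\]
which rearranges to
\[
\mathfrak{h}(\ell+\bar{\ell}+e_i)-\mathfrak{h}(\ell+\bar{\ell})\leq \mathfrak{h}(\ell+e_i)-\mathfrak{h}(\ell)=0.
\]
Combined with monotonicity $\mathfrak{h}(\ell+\bar{\ell}+e_i)\geq\mathfrak{h}(\ell+\bar{\ell})$, this forces equality, proving (\ref{eq:genstability-}).

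Finally, the equivalence of (\ref{eq:genstability-}) and (\ref{eq:genstability+}) is essentially a relabeling. The contrapositive of (\ref{eq:genstability-}) states: whenever $w_0(\ell+\bar{\ell}+e_i)=w_0(\ell+\bar{\ell})+1$, one has $w_0(\ell+e_i)=w_0(\ell)+1$ (using that these are the only two possible differences). Setting $\ell':=\ell+\bar{\ell}$ and keeping the same $\bar{\ell}$, this reads precisely as (\ref{eq:genstability+}) after renaming $\ell'\leadsto \ell$. Thus the two formulations are logically equivalent, and no separate argument is needed.

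There is no real obstacle here: the one subtle point is verifying the $\min$/$\max$ computation of $\ell+e_i$ and $\ell+\bar{\ell}$, which requires the hypothesis $\bar{\ell}_i=0$ (without it $\min$ would not be $\ell$, and the inequality would not close). Everything else is bookkeeping between the $\mathfrak{h}$- and $w_0$-languages.
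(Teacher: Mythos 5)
Your proof is correct and is precisely the argument the paper intends: the corollary is presented as a direct consequence of the matroid rank inequality (\ref{eq:matroid}), applied exactly to the pair $\ell+e_i$, $\ell+\bar{\ell}$, whose $\min$ and $\max$ are $\ell$ and $\ell+\bar{\ell}+e_i$ thanks to $\bar{\ell}_i=0$. Your translation between the $\hh$- and $w_0$-languages and the contrapositive argument for the equivalence of (\ref{eq:genstability-}) and (\ref{eq:genstability+}) are both sound.
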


\subsection{Restricting the weight function}\label{bek:w0megszoritas}
Let $(C,o) \subset (\mathbb{C}^N,0)$ denote a reduced  curve singularity with irreducible branches $(C_i,o)$ for $i \in \mathcal{I}= \{1,\dots, r\}$. Given any subset $I \subseteq \mathcal{I}$ denote by $(C_I,o)$ the singularity having irreducible branches $(C_i,o)$ with $i \in I$. Let us denote the  weight function corresponding to this singularity by $w_0^{C_I}:\bZ^{\#I}_{\geq 0} \rightarrow \bZ$.  $\bZ^{\#I}$ is a sublattice of $\bZ^r$ in a natural way.

The following lemma is well-known (it follows from the definitions,
or one can use  (\ref{eq:hfromS}) as well).

\begin{proposition}\label{lem:wrestricted}
    Let $J \subset I \subset \mathcal{I}$ be two subsets and $\ell = \sum_{j \in J} \ell_j e_j \in \bZ^r_{\geq 0}$ a lattice point with support in $ J$ (i.e. $\ell_j=0$ if $j\not\in J$).
    Then $\hh^{C_I}(\ell) = \hh^{C_J}(\ell)$, hence $w_0^{C_I}(\ell) = w_0^{C_J}(\ell)$ as well.
\end{proposition}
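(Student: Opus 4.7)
My plan is to prove the Hilbert function statement $\hh^{C_I}(\ell)=\hh^{C_J}(\ell)$ directly via the ring-theoretic definition of $\mathcal{F}$, and then deduce the weight function equality as an immediate corollary. The starting observation is that the closed inclusion $(C_J,o)\hookrightarrow (C_I,o)$ induces a surjective $\bC$-algebra restriction map
\[
\pi:\mathcal{O}_{C_I,o}\twoheadrightarrow \mathcal{O}_{C_J,o},
\]
whose kernel is the ideal of germs vanishing on every branch $(C_j,o)$ with $j\in J$. In terms of the discrete valuations from subsection \ref{latticedef}, $\ker\pi=\{\,g\in\mathcal{O}_{C_I,o}\ |\ \vv_j(g)=\infty\ \text{for all } j\in J\,\}$, and for any $g\in\mathcal{O}_{C_I,o}$ and $j\in J$ one has $\vv_j^{\,C_J}(\pi g)=\vv_j^{\,C_I}(g)$.

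The main step is to check that $\pi$ sends the filtration $\mathcal{F}^{C_I}(\ell)$ precisely onto $\mathcal{F}^{C_J}(\ell)$, for $\ell$ supported in $J$. Since $\ell_i=0$ for $i\in I\setminus J$, the filtration condition on $\mathcal{F}^{C_I}(\ell)$ reduces to $\vv_j(g)\geq\ell_j$ for all $j\in J$, so the valuation-preserving property of $\pi$ gives $\pi(\mathcal{F}^{C_I}(\ell))\subseteq \mathcal{F}^{C_J}(\ell)$. The reverse inclusion follows by lifting: given $g'\in\mathcal{F}^{C_J}(\ell)$, surjectivity of $\pi$ provides $g\in\mathcal{O}_{C_I,o}$ with $\pi(g)=g'$, and then $\vv_j(g)=\vv_j(g')\geq\ell_j$ for all $j\in J$, while no constraint is imposed for $i\in I\setminus J$. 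Finally, $\ker\pi\subseteq \mathcal{F}^{C_I}(\ell)$ holds tautologically since the valuations of elements of $\ker\pi$ are infinite on the relevant coordinates.

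These three facts together yield an induced isomorphism
\[
\mathcal{O}_{C_I,o}/\mathcal{F}^{C_I}(\ell)\;\xrightarrow{\;\sim\;}\;\mathcal{O}_{C_J,o}/\mathcal{F}^{C_J}(\ell),
\]
so taking $\bC$-dimensions gives $\hh^{C_I}(\ell)=\hh^{C_J}(\ell)$. For the weight function part, observe that $|\ell|=\sum_{i\in I}\ell_i=\sum_{j\in J}\ell_j$ since the extra coordinates vanish, so the defining formula $w_0(\ell)=2\hh(\ell)-|\ell|$ gives $w_0^{C_I}(\ell)=w_0^{C_J}(\ell)$ immediately. I do not anticipate any genuine obstacle here; the only point requiring mild care is the bookkeeping of which coordinates contribute to $\mathcal{F}(\ell)$ when one passes between the ambient lattices $\bZ^I$ and $\bZ^J$. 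As an alternative route, one can verify the same identity inductively using the increment rule \eqref{eq:hfromS} together with the fact that the projection $\bZ^I\to\bZ^J$ (forgetting the $I\setminus J$ coordinates) sends $\mathcal{S}_{C_I,o}$ onto $\mathcal{S}_{C_J,o}$; the two proofs are essentially equivalent.
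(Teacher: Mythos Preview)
Your proof is correct and matches what the paper indicates: the paper does not spell out an argument but simply notes that the statement ``follows from the definitions, or one can use (\ref{eq:hfromS}) as well.'' Your main argument is precisely the ``from the definitions'' route, carried out carefully via the surjection $\pi:\mathcal{O}_{C_I,o}\twoheadrightarrow\mathcal{O}_{C_J,o}$, and the alternative you mention at the end is exactly the second option the paper points to.
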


\subsection{The multiplicity of $(C,o)$} \label{ss:multlocmin} Let $(C,o)$ be a reduced  curve singularity as above. Let $\mathfrak{m}_{C,o}$ denote
 the maximal ideal of $\mathcal {O}_{C,o}$.  The multiplicity $m=m(C,o)$ is defined as the degree
of a generic linear projection $L:(C,o)\to (\mathbb{C},0)$ (i.e.
$L\in \mathfrak{m}_{C,o}$ is generic).
In particular,  $m = \sum_{i \in \mathcal{I}}m_i$, where $m_i = m(C_i,o)$.
Equivalently, if $ \dim_{\bC} (\mathcal O_{C,o}/\mathfrak{m}_{C,o}^k)=ak+b$ ($k\gg 0$)
is the Hilbert--Samuel  polynomial of $(C,o)$, then $m(C,o)$ is the coefficient $a$ of $k$.
 
  Set the `multiplicity vector'
$$\mm = (m_1,\dots,m_r) \in \bZ_{\geq 0}^r.$$
It is the image of a generic linear function under the valuation map $(\vv_1,\dots,\vv_r) : \mathcal{O}_{C,o} \to( \bZ_{\geq 0}\cup\infty)^r$ and hence the smallest non-zero element of the semigroup $\mathcal{S}_{C,o}$.
 This fact (together with properties (1)-(2) from subsection \ref{rem:semigroup})
 shows that  $\mathcal{S}_{C,o} \setminus R(\mm, \infty) = \{\mathbf{0}\}$. Thus (using Corollary \ref{cor:wfromS}) we have
\begin{equation}\label{eq:mineq}w_0(\mm) = 2-m(C,o).\end{equation}
Clearly, $m=1$ if and only if $(C,o)$ is smooth, and this happens exactly when
$\bH^*_{\text{\normalfont red}}(C,o)=0$, see e.g. \cite[Example 4.6.1]{AgNeIV}.
The case $m=2$ can be classified as well.
By Abhyankar's inequality \cite{Ab}
\begin{equation}\label{eq:Ab}
\dim(\mathfrak{m}_{C,o}/\mathfrak{m}_{C,o}^2)=\text{ embdim}(C,o)\leq m(C,o).
\end{equation}
In particular, if $m(C,o)=2$, then  the embedding dimension $\text{embdim}(C,o)$
is necessarily 2, hence $(C,o)$ is a plane curve singularity with multiplicity 2, i.e. in some local coordinates it has equation $x^2+y^k$, $k\geq 2$, whose weight function $w_0$ is easily computable and it satisfies $w_0\geq 0$. 

These facts together with (\ref{eq:mineq}) imply the following

\begin{proposition}\label{thm:nonnegw}
    For a reduced  curve singularity the following facts hold:

    (a)  $m(C,o) \leq 2$    if and only if
$\bH^*_{2n}(C,o)=0$ for all $n<0$ (i.e. 
      $w_0(\ell) \geq 0$ for all $\ell \in \bZ^r_{\geq 0}$),

     (b)
      $m(C,o) = 1$    if and only if
     $\bH^0_{\text{\normalfont red}}(C,o)=0$,

     (c)
      $m(C,o) = 2$    if and only if
     $\bH^0_{\text{\normalfont red}}(C,o)\not=0$, but $\bH^0_{2n}(C,o) = 0$ for all $n < 0$.

     \vspace{2mm}

     \noindent
     In all these cases (by a direct computation)  $\bH^{>0}(C,o)=0.$
\end{proposition}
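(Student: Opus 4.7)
All four claims revolve around the identity $w_0(\mathbf{m}) = 2 - m(C,o)$ from (\ref{eq:mineq}) and the elementary fact that, since $w_q$ is the maximum of $w_0$ on the vertices of a $q$-cube, $S_n$ is nonempty iff some lattice point satisfies $w_0(\ell) \leq n$. For (a), this immediately yields the equivalence between $\bH^*_{2n} = 0$ for all $n < 0$ and $w_0 \geq 0$ on $\bZ^r_{\geq 0}$. The implication $w_0 \geq 0 \Rightarrow m \leq 2$ is trivial by evaluating at $\mathbf{m}$, and its converse is precisely the content of the paragraph preceding the statement (for $m = 1$ the germ is smooth and $w_0(\ell) = \ell$; for $m = 2$, a direct check on $x^2 + y^k$ gives $w_0 \geq 0$).

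The crux for (b) and (c) is a short \emph{isolation lemma}: if $m(C,o) \geq 2$, then $\mathbf{0}$ is an isolated vertex of $S_0$. This follows from Corollary \ref{cor:wfromS}, since $w_0(\mathbf{0}) = 0$ and $\mathbf{0} \in \overline{\Delta}_i(\mathbf{0})$ force $w_0(e_i) = 1$ for every $i$; therefore every cube incident to $\mathbf{0}$ has weight $\geq 1$. Combined with \cite[Example 4.6.1]{AgNeIV} (which gives $m = 1 \Leftrightarrow$ smooth $\Leftrightarrow \bH^*_{\text{red}} = 0$), statement (b) follows: if $m \geq 2$ then $\mathbf{m} \in S_0 \setminus \{\mathbf{0}\}$ (since $w_0(\mathbf{m}) \leq 0$), and the isolation lemma forces at least two connected components in $S_0$, so $\bH^0_{\text{red}, 0} \neq 0$. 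Statement (c) is then a formal combination of (a) and (b).

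Finally, for the vanishing $\bH^{>0} = 0$: Remark \ref{rem:lhom}(c) gives $\bH^{\geq r} = 0$, which covers every case with $r = 1$. The only remaining case is $m = 2, r = 2$, i.e., a plane curve $x^2 + y^{2j}$ with $j \geq 1$. Theorem \ref{th:int1} gives $H^{>0}(S_n) = 0$ for $n > 0$, and $S_n = \emptyset$ for $n < 0$ by (a), so one only needs $H^1(S_0) = 0$. Using the standard semigroup description for two smooth branches of intersection number $j$, namely $\mathcal{S}_{C,o} = \{(a,b) : a = b\} \cup \{(a,b) : \min(a,b) \geq j\}$, together with (\ref{eq:w0valtozasa}), a short bookkeeping shows that the zero-locus of $w_0$ is the diagonal segment $\{(k,k) : 0 \leq k \leq j\}$, whose points are pairwise non-adjacent. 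Hence $S_0$ is discrete and $H^1(S_0) = 0$.

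The main obstacle is only this last calculation, i.e.~the explicit semigroup and zero-locus analysis for $x^2 + y^{2j}$; every other step is purely formal manipulation of the identity $w_0(\mathbf{m}) = 2 - m(C,o)$ and the isolation lemma.
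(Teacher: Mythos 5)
Your argument is correct and follows the paper's own route: everything hinges on the identity $w_0(\mathbf{m})=2-m(C,o)$ together with the classification of germs with $m\leq 2$ via Abhyankar's inequality, and your isolation lemma for $\mathbf{0}$ is exactly the observation already recorded in Remarks \ref{rem:h|R(0,E)} and \ref{rem:0locmin}. The only deviation is that, for $\bH^{>0}=0$ in the case $m=2$, $r=2$, you invoke the contractibility of $S_n$ for $n>0$ (Theorem \ref{th:Main2}) rather than computing all $S_n$ directly; this is legitimate and non-circular, since the proof of that theorem in Section 6 does not rely on Proposition \ref{thm:nonnegw}, but the paper intends a fully direct computation there.
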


\noindent
This already shows that the graded module $\bH^0(C,o)$ determines whether $m(C,o)$ is $1$, $2$, or $\geq 3$.






\begin{remark}\label{rem:0locmin}
It is natural to ask for the characterization of all
those germs with $\bH^0_{{\rm red}}\not=0$, but with simplest possible $\bH^0_{{\rm red}}$.

If  $\bH^0_{{\rm red}}\not=0$ then $S_0$ has at least two connected components, one of them 
consists of the single lattice point $\mathbf {0}$, an other one contains $\mathbf {1}$.
This follows from the structure of $\hh$-values in $R(\mathbf{0},\mathbf{1})$, cf. Remark \ref{rem:h|R(0,E)}. 
In particular, the simplest case when $\bH^0_{{\rm red}}\not=0$ is
$\bH^0_{{\rm red}}=\bH^0_{{\rm red},0}\simeq \bZ$, $\bH^{\geq 1}=0$.
These conditions can also be characterized either by
the identity ${\mathbf c}={\mathbf m}$, or, equivalently,  in terms of the semigroup, by
$\mathcal {S}_{C,o}=\{\mathbf{0}\}\cup (\mathbf{c}+\bZ^r_{\geq 0})$ (See Example \ref{ex:c}).
A non-smooth ordinary
$r$-tuple (\cite{Stevens85,BG80})
is a particular case satisfying additionally ${\bf c}={\bf m}={\bf 1}$ and  $r=2-\min \{w_0\}$.

\end{remark}
\subsection{Good semigroups}\label{rem:semigroup}

(a) \
Recall that the (sub)semigroup of values $\mathcal{S}_{C,o}\subset \bZ^r_{\geq 0}$
has the special property that it admits a conductor: there exists
$\mathbf{c}\in \bZ^r_{\geq 0}$ such that $\mathbf{c}+\bZ^r_{\geq 0}\subset\mathcal{S}_{C,o}$.

If $r=1$, then any numerical semigroup $\mathcal{S}\subset \bZ_{\geq 0}$ (with $0\in\mathcal{S}$)
which admits a conductor can be realized as $\mathcal {S}_{C,o}$ for
a certain germ $(C,o)$: take e.g.
$(C,o)=({\rm Spec}\,\bC[\mathcal{S}],0)$.

However, a similar statement does not hold for any  subsemigroup of $\bZ^r_{\geq 0}$  with $r>1$, even the uniqueness  of $\mathbf{c}$ might fail (and the $\bC$-algebra
$\bC[\mathcal{S}]$ might not be finitely generated). In the $r>1$ case the literature
lists several additional properties which are satisfied by any semigroup of values $\mathcal{S}=\mathcal{S}_{C,o}$ (the validity of the next  properties (1)-(4)
is rather straightforward, see e.g. \cite[p. 350]{delaMata87}):

\vspace{2mm}

(1) $\mathbf{0}\in  \mathcal{S}$, moreover,
 if $s\in\mathcal{S}$ and  $s_i=0$ for some $i$ then $s=\mathbf{0}$;

(2) if $s',\, s''\in\mathcal {S}$ then $\min\{s',s''\}\in\mathcal{S}$;

(3) if $s',\, s''\in\mathcal {S}$ and $s'_i=s''_i$ for some $i$, then there exists
$t\in\mathcal{S}$, $t\geq \min\{s',s''\}$ such that $t_i>s_i'=s''_i$ and
$t_j=\min\{s'_j,s''_j\}$ if  $j\not=i$ and $s'_j\not=s''_j$;

(4) $\mathcal{S}$ admits a conductor (via (2), in fact, there exists a unique minimal
$\mathbf {c}$ with  $\mathbf{c}+\bZ^r_{\geq 0}\subset\mathcal{S}_{C,o}$).

\vspace{2mm}

A subsemigroup $\mathcal{S}\subset \bZ^r_{\geq 0}$ with these properties is called
{\it good (local)}, see e.g. \cite{good2,good}. 

(b) \ Not any good semigroup can be realized as a semigroup of values of a curve germ.
For a concrete example see \cite[Example 2.16]{good2}. 

(c)  \  Usually when we study properties of curve germs  starting from their semigroups, we basically prove statements for good semigroups (that is, we use properties (1)-(4), but usually not more).

(d) \ Having in mind the semigroup $\leftrightarrow$ Hilbert function correspondence from
subsection \ref{ss:2.2},  valid for curve germs, one can try to define a  `combinatorial Hilbert function' starting from any semigroup $\mathcal{S}\subset \bZ^r_{\geq 0}$ analogously to Definition \ref{def:Delta} and (\ref{eq:hfromS}). However, in the case of arbitrary $\mathcal{S}$ this procedure will not provide a well-defined $\hh$--function. On the other hand, if we start with a {\it good
semigroup} then this procedure does produce a well-defined $\hh$--function (in fact, this is equivalent to property (3) above),
which even satisfies the matroid rank inequality (\ref{eq:matroid})
(this is an exercise left to the reader). In particular,
it provides a weight function $w_0$ (via (\ref{eq:w0})), hence a lattice cohomology as well. Basically,
most of our theorems (and several theorems of the literature about curve germs)
are valid in this general context.
In this note the following results remain valid if we state them in the context of 
(the lattice cohomology associated with)  a good semigroup:
Theorem \ref{th:EUcurves},  Lemma \ref{lem:chcond},
 properties of the local minima of the weight function from section 3, the combinatorial parts
{\it (3)}$\Leftrightarrow${\it (4)}$\Leftrightarrow${\it (5)} of Theorem \ref{th:Gorchar}, the characterization of these properties via 
$\bH^*$ (as in Theorem \ref{thm:latgortechn}), or the Nonpositivity theorem 
\ref{th:Main}.  

(e) Let us exemplify statement (c): 
 from the properties (1)-(4) of a  good semigroup 
we deduce another one, which will be applied  later for semigroups of values:

\vspace{2mm}

(5)    $\Delta(\mathbf{c}-\mathbf{1})=\emptyset$.

\vspace{2mm}

This  identity for the semigroups of plane curve singularities
was proved in \cite[(2.7)]{delaMata87}. Here  we provide  a short proof for any good semigroup  based on the  `path technique' of  Lemma \ref{lem:chcond}.

Let us suppose indirectly that $\Delta(\mathbf{c}-\mathbf{1}) \neq \emptyset$, i.e. there exists an index $i \in \mathcal{I}$ and a semigroup element $s \in \Delta_i(\mathbf{c}-\mathbf{1})$. This means that $s_i= c_i-1$ and $ s_j \geq c_j$ for all $j \neq i$. By property (2)
       applied to   $s$ and $\mathbf{c}$ we get
     $ \mathbf{c} - e_i\in\mathcal{S}$.
Now, we can construct an infinite increasing path $ \mathbf{c} - e_i, \mathbf{c}, \ldots$
starting at
$\mathbf{c}-e_i$,  with all elements in $\mathcal{S}$.
Then Lemma \ref{lem:chcond} reads as $\mathbf {c}-e_i\geq \mathbf {c}$, a contradiction.

\section{Local minima of the weight function}

The semigroup of values is a rather complex invariant of curve germs; for several 
properties see e.g. \cite{delaMata87,delaMata}.
 It is hard to extract conceptual geometric information from its combinatorics. Therefore, 
it is very  desirable to introduce some other invariants, which might help to 
understand the complexity of the semigroup. 
The novelty of the present  note is that it establishes several new properties
via  the level sets of the weight function $w_0$. 
In this way,  even the older results of the literature related with the semigroup of values might 
get new deep geometric reinterpretations  in terms of the weight function and
lattice cohomology. From the technical point of view, these connections are
provided by the local minimum points of the weight function and their  $w_0$-values.


\subsection{Local minimum points} \label{locmincohomology}  \cite{KNS1}
In this subsection we define the local minimum points of the weight function and collect several correspondences between them and the bigraded lattice cohomology $\bZ[U]$-module (and the respective graded root). 

Let $(C,o) = \bigcup_{i}(C_i,o)$ be a reduced  curve singularity with semigroup of values $\mathcal{S}_{C,o} \subset \bZ^r_{\geq 0}$ and weight function $w_0: \bZ^r_{\geq 0} \to \bZ$.
Sometimes we will shorten $w_0$ into $w$.

\begin{define}
    A lattice point $\elll \in \bZ_{\geq 0}^r$ is said to be a \emph{local minimum point} of the weight function $w$ if $w(\elll) < w(\elll +e_i)$ for all $i \in \mathcal{I}$
 and    $w(\elll) < w(\elll - e_i)$ for all $i \in \mathcal{I}$ with  $\elll\geq e_i$.
   All local minimum points are semigroup elements
   (cf. (\ref{eq:Sfromh}), (\ref{eq:hfromS}) and (\ref{eq:w0valtozasa})). In particular, if $\elll$ is  a local minimum point, then  either $\elll={\mathbf 0}$ or $\elll\geq {\mathbf 1}$.

    The weight $w(\elll)$ of a local minimum point $\elll$ is said to be a \emph{local minimum value}.
\end{define}

\begin{remark} \label{rem:locmin}
The extension of $w_0$ to higher dimensional cubes (cf. (\ref{eq:w_qdef})) assures, that $\elll$ is a local minimum point if and only if $w_0(\elll) < w_q(\square_q)$ for every $q$-cube $\square_q$ strictly containing $\elll$ as one of its vertices. Also, by (\ref{eq:w0valtozasa}), the property $w(\elll) < w(\elll \pm e_i)$ is equivalent to
$w(\elll) \leq w(\elll \pm e_i)$.
\end{remark}

Let us compare this notion with the local minimum points of the corresponding graded root.

\begin{define}
    $v\in\mathfrak{V} $ is a {\em local minimum point} of the
graded root $\mathfrak{R}(C, o)$ if all of its neighbours (adjacent vertices) have higher $n$-degree. By the formal definition of graded roots  (see \cite[Definition 3.2]{NOSz}) this condition is equivalent to $v$ having a single neighbour (i.e. $v$ is a \textit{leaf} of the tree $\mathfrak{R}(C, o)$), or,  $v$ having no neighbour with lower degree.
\end{define}

Notice that, by its definition, a local minimum point $\elll \in \bZ^r_{\geq 0}$ of the weight function $w$ with $w(\elll)=n$ gives a distinct connected component $\{p\}$ of $S_n$. Also, $\{p\} \cap S_{n-1} = \emptyset$, hence the vertex corresponding to this connected component is a local minimum point with degree $n$ of the graded root $\mathfrak{R}(C, o)$. By (\ref{eq:w0valtozasa}), all local minimum points of $\mathfrak{R}(C, o)$ arise in this way. Thus we have the following correspondence:
\begin{equation*}
    \left\{ \begin{array}{c}
        \text{local minimum points of } w\\
         \text{ with weight } w=n
    \end{array}
\right\}
\overset{ 1-1}{\longleftrightarrow}
\left\{ \begin{array}{c}
        \text{local minimum points of } \mathfrak{R}(C, o)\\
         \text{ with degree } n
    \end{array}
\right\}.
\end{equation*}

\begin{multicols}{2}
    Turning to the lattice cohomology $\bZ[U]$-module itself, notice that on the right hand side of this commutative diagram the direct summands corresponding to the connected components consisting of a single local minimum point span exactly the kernel of the morphism. Hence
\begin{center}
\begin{tikzcd}
	{\mathbb{H}^0_{2n}=H^0(S_n)} & {\oplus_i H^0(S_{n, i})} \\
	{\mathbb{H}^0_{2n-2}=H^0(S_{n-1})} & {\oplus_j H^0(S_{n-1,j})}
	\arrow["\cong", from=1-1, to=1-2]
	\arrow["U"', from=1-1, to=2-1]
	\arrow[from=1-2, to=2-2]
	\arrow["\cong"', from=2-1, to=2-2]
\end{tikzcd}
\end{center}
\end{multicols}
\vspace{-8mm}
\begin{equation*}
   \# \left\{ \begin{array}{c}
        \text{loc. min. pts of } w\\
         \text{ with weight } w=n
    \end{array}
\right\}
= \#
\left\{ \begin{array}{c}
        \text{loc. min. pts of } \mathfrak{R}\\
         \text{ with degree } n
    \end{array}
\right\} = \text{rank}_\bZ\ker( U : \bH^0_{2n} \to \bH^0_{2n-2} ),
\end{equation*}
i.e. we can recover the number of local minimum points with a given weight from the graded $\bZ[U]$-module structure of $\bH^{0}(C, o)$.\vspace{2mm}

The above correspondence
 manifests itself  in the three theorems
 presented in the paper in the following way. The Nonpositivity Theorem \ref{th:Main} regarding the
 weights of the reduced cohomology  follows from the fact that all local minimum points have nonpositive weight. The behaviour of the local minimum points of weight $0$ allows for the lattice cohomology to detect whether or not the  curve singularity is Gorenstein (cf. Theorem \ref{thm:latgor}). Finally, in the {\it plane curve singularity } case, the fact that the `multiplicity vector'
 has the largest weight among the  non-zero local minimum values enables one to read off the multiplicity from the lattice cohomology module (cf. Theorem \ref{th:MF}).

\begin{example}\label{ex:nagy}
 Using $\bH^*$ and $\mathfrak{R}$ let us give a pictorial presentation of the previous paragraph. 

\begin{center}
\tikzset{every picture/.style={line width=0.75pt}} 


\end{center}

 Diagram \textit{a)} depicts the graded root and $\bH^1$ of the plane curve singularity $(C,o)$ given by the equation $(x^2-y^7)(x^5-y^4)=0$. Here, the dots represent the vertices of $\mathfrak{R}$, i.e. $\bZ$-summands of $\bH^0$ and the circles represent $\bZ$-summands of $\bH^1$ (each of them placed on the appropriate  weight level).

The black dots and circles in diagram \textit{b)} correspond to $\bH^*_\text{red}$
and the edges to the $U$--action
(by a non-canonical isomorphism). 
Observe that the set of weights  $n$ such that $ \bH^*_{\text{red}, \, 2n}(C,o)\not=0 $ is indeed bounded from above by $0$ (corresponding to the Nonpositivity Theorem \ref{thm:intr-nonpos}).

Diagram \textit{c)} shows the local minima of weight $n=0$. The fact that there are two of them 
implies that $(C,o)$ is Gorenstein (see Theorem \ref{thm:mult-intr}\textit{(a)}).

Finally, diagram \textit{d)} depicts all of the local minima. Among the ones with weight $n<0$ the maximal weight is $n=-4$, the weight of $\mathbf{m}$.  This corresponds to the fact that the multiplicity of $(C,o)$ is $2-(-4)=6$ (see Theorem \ref{thm:mult-intr}\textit{(b)}).
\end{example}


\subsection{Generalized local minima} It turns out that in order to understand the local minima of the weight function, it is convenient to use a slightly more general notion, namely that of \emph{generalized local minima}. In this subsection we give a detailed discussion of this notion. It forms the technical backbone of this paper and will be used in the proofs of all three main results.

\begin{define}
    A lattice point $\elll \in \bZ^r_{\geq 0}$ satisfying the condition
      $  w(\elll)< w(\elll-e_i)$ for any $i \in \mathcal{I}$ with $\elll\geq e_i$
     is said to be a \emph{generalized local minimum point} of the weight function.
\end{define}
\begin{lemma} \label{lem:lm}
    Let $\elll>{\mathbf 0}$ be a nonzero lattice point.
    Then   (for notation see Definition \ref{def:Delta})

 (a)   $\elll$ is a generalized local minimum point
  if and only if  $\Delta(\elll - \mathbf {1}) = \emptyset$;

 (b)        $\elll$ is a  local minimum point  if and only if
       $\elll \in \mathcal{S}_{C,o}$ and $\Delta(\elll - \mathbf{1}) = \emptyset$.
\end{lemma}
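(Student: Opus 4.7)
The plan is to reduce both statements to the $\pm 1$-step formula \eqref{eq:w0valtozasa} for the weight function together with the elementary identification $\overline{\Delta}_i(\ell - e_i) = \Delta_i(\ell - \mathbf{1})$, and then to invoke \eqref{eq:Sfromh} to separate the ``downward'' local minimum condition from the ``semigroup / upward'' one.

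First I would unwind the definitions to observe the key identity: by Definition~\ref{def:Delta},
\[
\overline{\Delta}_i(\ell - e_i) = \{\, s \in \mathcal{S}_{C,o} \ |\ s_i = \ell_i - 1,\ s_j \geq \ell_j \text{ for } j \neq i\,\},
\]
and similarly
\[
\Delta_i(\ell - \mathbf{1}) = \{\, s \in \mathcal{S}_{C,o} \ |\ s_i = \ell_i - 1,\ s_j > \ell_j - 1 \text{ for } j \neq i\,\}.
\]
Since $s_j \in \bZ$, the strict inequality $s_j > \ell_j - 1$ is the same as $s_j \geq \ell_j$, so $\overline{\Delta}_i(\ell - e_i) = \Delta_i(\ell - \mathbf{1})$.

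For part \textit{(a)}, by Corollary~\ref{cor:wfromS} the condition $w(\ell) < w(\ell - e_i)$ is equivalent to $w(\ell) = w(\ell - e_i) - 1$, i.e.\ to $\overline{\Delta}_i(\ell - e_i) = \emptyset$. Hence $\ell$ is a generalized local minimum point precisely when $\overline{\Delta}_i(\ell - e_i) = \emptyset$ for every index $i$ with $\ell_i \geq 1$, which by the identity above is the same as $\Delta_i(\ell - \mathbf{1}) = \emptyset$ for every such $i$. For an index $j$ with $\ell_j = 0$ we have $(\ell - \mathbf{1})_j = -1$, and because $\mathcal{S}_{C,o} \subset \bZ_{\geq 0}^r$ the set $\Delta_j(\ell - \mathbf{1})$ is automatically empty. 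Taking the union over all $i$ yields $\Delta(\ell - \mathbf{1}) = \emptyset$, proving \textit{(a)}.

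For part \textit{(b)} I would add the ``upward'' half of the local minimum condition. By \eqref{eq:w0valtozasa}, $w(\ell) < w(\ell + e_i)$ for every $i$ is equivalent to $\overline{\Delta}_i(\ell) \neq \emptyset$ for every $i$, which by \eqref{eq:hfromS} (and its direct consequence \eqref{eq:Sfromh}) is equivalent to $\ell \in \mathcal{S}_{C,o}$. Combined with \textit{(a)}, this shows that $\ell$ is a local minimum point if and only if $\ell \in \mathcal{S}_{C,o}$ and $\Delta(\ell - \mathbf{1}) = \emptyset$.

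The argument is essentially bookkeeping; there is no real obstacle, but one must be careful to distinguish the two cases $\ell_i \geq 1$ and $\ell_i = 0$ in the ``downward'' condition so that the statement $\Delta(\ell - \mathbf{1}) = \emptyset$ correctly captures the condition over all indices and not only over the indices in the support of $\ell$.
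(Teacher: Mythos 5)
Your proof is correct and follows essentially the same route as the paper: you translate the downward/upward weight conditions via (\ref{eq:w0valtozasa}) and (\ref{eq:Sfromh}), and use the identity $\overline{\Delta}_i(\elll-e_i)=\Delta_i(\elll-\mathbf{1})$, exactly as in the paper's argument. Your extra remark that $\Delta_j(\elll-\mathbf{1})=\emptyset$ automatically when $\elll_j=0$ is a small but welcome addition that the paper leaves implicit.
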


\begin{proof} \textit{(a)}
 By (\ref{eq:w0valtozasa}), the property  $w(\elll-e_i) > w(\elll)$, for a certain  $i \in \mathcal{I}$, 
 is equivalent to $\overline{\Delta}_i(\elll-e_i) = \emptyset$. But $\overline{\Delta}_i(\elll-e_i) = \Delta_i(\elll- \mathbf{1})$
 and
   $ \Delta(\elll - \mathbf {1})= \bigcup_{i\in \mathcal {I}} \Delta_i(\elll - \mathbf{1})$.

\textit{(b)} Use  \textit{(a)} and the fact that
 $w(\elll+e_i) > w(\elll)$ for all $i $ is equivalent to $\elll \in \mathcal{S}_{C,o}$, cf. (\ref{eq:Sfromh}).
\end{proof}

\begin{example}\label{ex:c} 
%
There exist three `canonical' local minimum points, namely $\mathbf{0}$,
$\mathbf {m}$ and
$\mathbf{c}$.

For $r=1$ the fact that  $\mathbf{c}$ is a local minimum point follows directly from the definition.  For $r\geq 2$ use (5) from subsection \ref{rem:semigroup}.

 If  $\mathbf{0}$, $\mathbf {m}$ and $\mathbf{c}$ are all distinct then
the number  of the minimal $\bZ[U]$-module generators of
$\bH^0_{{\rm red}}$ is at least two. On the other hand,
$\mathbf {m}=\mathbf{c}$ is equivalent with ${\rm rank}_{\bZ}\bH^0_{{\rm red}}=1$
(and also with ${\rm rank}_{\bZ}\bH^*_{{\rm red}}=1$). (Compare with Remark \ref{rem:0locmin}.)

\end{example}

The very existence of the conductor $\mathbf{c}$
of the semigroup of values guarantees that there are only finitely many generalized local minimum points.   Indeed, Lemma \ref{lem:lm} implies the following.

\begin{corollary}\label{cor:kisebb}
If  $\elll$ is a generalized local minimum point then $\mathbf{0}\leq \elll\leq \mathbf{c}$.
\end{corollary}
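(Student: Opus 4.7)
My plan is to use Lemma \ref{lem:lm}(a) to rephrase the hypothesis combinatorially and then proceed by contraposition, constructing an explicit element of $\Delta(\elll-\mathbf{1})$ whenever $\elll \not\leq \mathbf{c}$.

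More precisely, the lower bound $\elll \geq \mathbf{0}$ is immediate since $\elll \in \bZ^r_{\geq 0}$, and the case $\elll = \mathbf{0}$ gives $\mathbf{0} \leq \mathbf{c}$ trivially. So I may assume $\elll > \mathbf{0}$, and by Lemma \ref{lem:lm}(a) the generalized local minimum condition reads $\Delta(\elll-\mathbf{1}) = \emptyset$. I will show the contrapositive: if $\elll \not\leq \mathbf{c}$, then $\Delta(\elll - \mathbf{1}) \neq \emptyset$.

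Suppose there exists an index $i \in \mathcal{I}$ with $\elll_i > c_i$, i.e.\ $\elll_i \geq c_i + 1$. Define $s \in \bZ^r_{\geq 0}$ by
\[
s_i = \elll_i - 1, \qquad s_j = \max\{\elll_j, c_j\} \ \text{ for } j \neq i.
\]
Then $s_i = \elll_i - 1 \geq c_i$ by choice of $i$, and $s_j \geq c_j$ for every $j \neq i$ by construction. Hence $s \geq \mathbf{c}$, which by the defining property of the conductor (cf.\ subsection \ref{ss:cond}) gives $s \in \mathbf{c} + \bZ^r_{\geq 0} \subset \mathcal{S}_{C,o}$. Moreover $s_i = \elll_i - 1 = (\elll - \mathbf{1})_i$ and for $j \neq i$ we have $s_j \geq \elll_j > \elll_j - 1 = (\elll - \mathbf{1})_j$, so $s \in \Delta_i(\elll - \mathbf{1}) \subset \Delta(\elll - \mathbf{1})$, a contradiction.

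There is no real obstacle here: the argument amounts to producing one semigroup element inside $\Delta(\elll-\mathbf{1})$, and the inclusion $\mathbf{c} + \bZ^r_{\geq 0} \subset \mathcal{S}_{C,o}$ supplies plenty of them. The only thing to be careful about is verifying coordinate by coordinate that the constructed $s$ sits in the correct slice $\Delta_i(\elll - \mathbf{1})$.
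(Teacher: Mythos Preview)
Your proof is correct and follows essentially the same approach the paper indicates: the paper states the corollary as an immediate consequence of Lemma~\ref{lem:lm} together with the existence of the conductor, and your argument makes this explicit by exhibiting a concrete semigroup element $s\in\Delta_i(\elll-\mathbf{1})$ whenever $\elll_i>c_i$.
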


\begin{lemma} \label{gyengeGor}
    Let $\elll \in \bZ_{\geq 0}^r$ be a generalized local minimum point. Then the two statements
    $$\ell \in \mathcal{S}_{C, o} \ \ \ \ \text{ and } \ \ \ \ \Delta(\elll-\mathbf{1}-\ell) \not= \emptyset $$
    cannot be true at the same time for any lattice point $\ell \in \bZ^r$.
\end{lemma}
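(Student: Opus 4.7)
The plan is to derive a contradiction from Lemma \ref{lem:lm}\textit{(a)}. Since $\elll$ is a generalized local minimum point, that lemma gives $\Delta(\elll - \mathbf{1}) = \emptyset$. So the goal is: assuming both $\ell \in \mathcal{S}_{C,o}$ and $\Delta(\elll - \mathbf{1} - \ell) \neq \emptyset$ hold, exhibit an explicit element of $\Delta(\elll - \mathbf{1})$ to obtain a contradiction. Note that, by Definition \ref{def:Delta}, $\Delta(\cdot)$ makes sense for arbitrary lattice points in $\bZ^r$, so the hypothesis is meaningful without any a priori sign restriction on $\elll - \mathbf{1} - \ell$.

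The natural candidate is to translate. Pick some $i \in \mathcal{I}$ and some $s \in \Delta_i(\elll - \mathbf{1} - \ell)$; by definition this means $s \in \mathcal{S}_{C,o}$, $s_i = \elll_i - 1 - \ell_i$, and $s_j > \elll_j - 1 - \ell_j$ for every $j \neq i$. I claim that $s + \ell \in \Delta_i(\elll - \mathbf{1})$. The coordinate check is immediate:
\begin{equation*}
(s+\ell)_i = \elll_i - 1, \qquad (s+\ell)_j > \elll_j - 1 \ \text{ for } j \neq i,
\end{equation*}
and the second inequality, being between integers, upgrades to $(s+\ell)_j \geq \elll_j$. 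Membership $s + \ell \in \mathcal{S}_{C,o}$ follows because $\mathcal{S}_{C,o}$ is closed under addition: this is built into the definition (\ref{eq:S_Cdef}) since the valuations $\vv_i$ are additive on products and products of non-zero-divisors are non-zero-divisors (equivalently, any good semigroup is by definition a submonoid of $\bZ^r_{\geq 0}$, so the same argument works in that more general context, cf.\ \ref{rem:semigroup}). Hence $s + \ell \in \Delta_i(\elll - \mathbf{1}) \subseteq \Delta(\elll - \mathbf{1})$, contradicting $\Delta(\elll - \mathbf{1}) = \emptyset$.

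There is no real obstacle here beyond recognizing the right \emph{translation trick}: the assertion is essentially a reformulation of the defining property $\Delta(\elll - \mathbf{1}) = \emptyset$ of a generalized local minimum, shifted by a semigroup element $\ell$, and the shift is made legitimate by the additive closure of $\mathcal{S}_{C,o}$. The mild subtlety is just to keep track of which coordinate is an equality and which is a strict inequality after the shift, so that one lands precisely in $\Delta_i$ rather than merely in $\overline{\Delta}_i$.
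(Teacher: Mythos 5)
Your proof is correct and is essentially identical to the paper's: both assume indirectly that $\ell\in\mathcal{S}_{C,o}$ and $\ell'\in\Delta(\elll-\mathbf{1}-\ell)$, observe that $\ell+\ell'\in\Delta(\elll-\mathbf{1})$ by additivity of the semigroup, and contradict Lemma \ref{lem:lm}\textit{(a)}. Your version merely spells out the coordinate bookkeeping in $\Delta_i$ that the paper leaves implicit.
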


\begin{proof}
    Assume indirectly that $\ell \in \mathcal{S}_{C,o}$ and that there exists an element $\ell' \in
    \Delta(\elll-\mathbf{1}-\ell)$. Then $\ell + \ell' \in \Delta(\elll-\mathbf{1})$, a fact which contradicts  Lemma \ref{lem:lm}{\it (a)}.
\end{proof}

\begin{corollary} \label{szimmutak}
    Let $\elll \in \bZ_{\geq 0}^r$ be a generalized local minimum point and $i\in\mathcal{I}$. Then
    \begin{equation}\label{eq:ujszam}
    w(\ell) < w(\ell + e_i) \ \ \text{ implies  } \ \ w(\elll-e_i-\ell) > w(\elll-\ell)
    \end{equation}
    for any lattice point $\ell\in \bZ_{\geq 0}^r$ satisfying  $\ell+e_i\leq \elll$.
\end{corollary}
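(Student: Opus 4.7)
The plan is to reduce the corollary, via equation (\ref{eq:w0valtozasa}), to a statement about the sets $\overline{\Delta}_i$ of Definition \ref{def:Delta}, and then to derive a contradiction by applying Lemma \ref{gyengeGor} to a carefully chosen semigroup element.

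First, I would translate both the hypothesis and the negation of the conclusion into the language of Corollary \ref{cor:wfromS}. The hypothesis $w(\ell) < w(\ell+e_i)$ is equivalent, by (\ref{eq:w0valtozasa}), to $\overline{\Delta}_i(\ell) \neq \emptyset$, so there exists an element $s \in \mathcal{S}_{C,o}$ with $s_i = \ell_i$ and $s_j \geq \ell_j$ for every $j \neq i$. On the other hand, setting $a := \elll - e_i - \ell$ (which lies in $\bZ^r_{\geq 0}$ since $\ell + e_i \leq \elll$), the negation of the conclusion reads $w(a) \leq w(a+e_i)$. Again by (\ref{eq:w0valtozasa}), this is equivalent to $\overline{\Delta}_i(a) \neq \emptyset$, that is, to the existence of an element $s' \in \mathcal{S}_{C,o}$ satisfying $s'_i = \elll_i - 1 - \ell_i$ and $s'_j \geq \elll_j - \ell_j$ for all $j \neq i$.

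Next, I would apply Lemma \ref{gyengeGor} with the role of its ``$\ell$'' played by our semigroup element $s$. Since $s \in \mathcal{S}_{C,o}$, the lemma forces $\Delta(\elll - \mathbf{1} - s) = \emptyset$. To reach a contradiction it therefore suffices to check that $s' \in \Delta_i(\elll - \mathbf{1} - s)$. This is a direct coordinate-by-coordinate verification: the $i$-th coordinate matches,
\[
s'_i \;=\; \elll_i - 1 - \ell_i \;=\; \elll_i - 1 - s_i \;=\; (\elll - \mathbf{1} - s)_i,
\]
and for $j \neq i$ the required strict inequality follows from
\[
s'_j \;\geq\; \elll_j - \ell_j \;\geq\; \elll_j - s_j \;>\; \elll_j - 1 - s_j \;=\; (\elll - \mathbf{1} - s)_j,
\]
the middle inequality using $s_j \geq \ell_j$. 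This contradicts Lemma \ref{gyengeGor}, completing the argument.

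The main delicate point is the passage from the \emph{closed} sets $\overline{\Delta}_i$ produced by the weight-function change formula (\ref{eq:w0valtozasa}) to the \emph{open} sets $\Delta_i$ required by Lemma \ref{gyengeGor}: the shift by $\mathbf{1}$ in the argument of $\Delta$ is precisely what upgrades the two chains of weak coordinate inequalities (one from $s$, one from $s'$) into the strict ones demanded by the open version. Apart from this bookkeeping, the proof is a direct application of Lemma \ref{gyengeGor}.
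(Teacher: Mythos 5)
Your proposal is correct and is essentially the paper's own proof: both translate the hypothesis and conclusion via (\ref{eq:w0valtozasa}) into nonemptiness of $\overline{\Delta}_i$-sets and then invoke Lemma \ref{gyengeGor}; your contradiction argument merely unwinds the paper's direct containment $\overline{\Delta}_i(\elll-e_i-\ell)\subseteq \Delta_i(\elll-\mathbf{1}-s)=\emptyset$ element by element. The coordinate verification you spell out is exactly the content of that containment.
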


\begin{proof}
    The inequality $w(\ell) < w(\ell+e_i)$ is equivalent to the existence of an element  $\ell' \in \overline{\Delta}_i(\ell)$, see (\ref{eq:w0valtozasa}). Since $\ell'\in\mathcal{S}_{C,o}$,
    by Lemma \ref{gyengeGor} it follows that
    $\Delta(\elll-\mathbf{1}-\ell') = \emptyset$. But $\Delta(\elll-\mathbf{1}-\ell') \supseteq \Delta_i(\elll-\mathbf{1}-\ell') \supseteq \overline{\Delta}_i(\elll-e_i-\ell)$, so $\overline{\Delta}_i(\elll-e_i-\ell) = \emptyset$, which implies
    $w(\elll-e_i-\ell) > w(\elll-\ell)$.
\end{proof}

\begin{proposition}\label{cl:glm}
Let $\elll \in \bZ^r_{\geq 0}$ be a generalized local minimum and let
$\gamma = \{x^k\}_{k=0}^t$ be an increasing path from $\mathbf{0}$ to  $\elll$. Then
     \begin{equation}\label{eq:w(glm)}
         w(\elll) = - \#\{x^k\,\vert \, w(x^{k+1})=w(x^k)-1 \text{ and }  w(\elll-x^{k})=w(\elll-x^{k+1})-1 \text{ with } 0 \leq k < t\}.
     \end{equation}
In particular, the right hand side is independent of the increasing path and $w(\elll) \leq 0$ for any  generalized local minimum $\elll$.
\end{proposition}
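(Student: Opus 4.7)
The plan is to exploit a symmetry between the forward path $\{x^k\}_{k=0}^t$ and the ``reverse'' path $y^j := \elll - x^{t-j}$, which is also an increasing path from $\mathbf{0}$ to $\elll$ (each step $y^{j+1}-y^j = x^{t-j}-x^{t-j-1} = e_{i(t-j-1)}$). The sign pattern of the weight differences along the two paths is linked by Corollary \ref{szimmutak}, and this link is precisely what forces the claimed identity.

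Concretely, for $0 \leq k < t$ set $\epsilon_k := w(x^{k+1}) - w(x^k) \in \{\pm 1\}$ (well-defined since each step is in a single coordinate, cf.\ (\ref{eq:w0valtozasa})), and $\eta_{t-k-1} := w(y^{t-k}) - w(y^{t-k-1}) = w(\elll - x^k) - w(\elll - x^{k+1}) \in \{\pm 1\}$. Then Corollary \ref{szimmutak}, applied with $\ell = x^k$ and $i = i(k)$, says that $\epsilon_k = +1$ forces $\eta_{t-k-1} = -1$. Equivalently, the joint pattern $(\epsilon_k,\eta_{t-k-1}) = (+1,+1)$ never occurs. Thus every index $k$ falls into one of the three classes
\begin{equation*}
A := \#\{k : (+1,-1)\}, \quad B := \#\{k : (-1,-1)\}, \quad C := \#\{k : (-1,+1)\},
\end{equation*}
with $A+B+C = t$, and $B$ is exactly the quantity on the right-hand side of (\ref{eq:w(glm)}).

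Now I would evaluate $w(\elll) = w(\elll) - w(\mathbf{0})$ in two ways: along the forward path it equals $\sum_k \epsilon_k = A - B - C$, and along the reverse path it equals $\sum_j \eta_j = C - (A+B)$ (the only $+1$'s among the $\eta$'s come from the $C$-class, by the forbidden-pattern remark). Equating the two expressions yields $A = C$, and then $w(\elll) = A - B - C = -B$, which is precisely the asserted formula. The bound $w(\elll) \leq 0$ and the independence of the count from the path follow immediately, since the count equals $-w(\elll)$, which depends only on $\elll$.

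I don't expect a serious obstacle: the only nontrivial input is Corollary \ref{szimmutak}, which has already been established, and the argument above is then a clean bookkeeping exercise with the sign patterns. The subtlety to be careful about is the direction convention for $\eta$ — making sure that the inequality supplied by Corollary \ref{szimmutak} matches the step of the reverse path at the right index $t-k-1$ — but this is a matter of unwinding definitions rather than a genuine difficulty.
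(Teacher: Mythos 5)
Your proof is correct and follows essentially the same route as the paper: both construct the dual increasing path $\overline{x}^k=\elll-x^{t-k}$, invoke Corollary \ref{szimmutak} to forbid the pattern where both the forward and the dual step increase, and conclude that $w(\elll)$ equals minus the number of doubly decreasing pairs (the paper averages the two telescoping sums rather than equating them to get $A=C$, but this is the same bookkeeping).
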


\begin{proof}
Let us choose an increasing path $\gamma = \{x^k\}_{k=0}^t$ from $\mathbf{0}$ to $\elll$ and define the dual path $\overline{\gamma}=\{ \overline{x}^{k}\}_{k=0}^t$ given by $\overline{x}^k := \elll-x^{t-k}$. Clearly, this is also an increasing path from $\overline{x}^0=\elll - x^t= \mathbf{0}$ to $\overline{x}^t=\elll - x^0=\elll$. By telescoping summation we have
    \begin{align*}
        w(\elll) = w(\elll)-w(\mathbf{0}) &= \frac{1}{2}\bigg( \sum_{k=0}^{t-1}\big( w(x^{k+1})-w(x^k)\big) + \sum_{k=0}^{t-1}\big( w(\overline{x}^{t-k})-w(\overline{x}^{t-k-1})\big)\bigg).
    \end{align*}
    By Lemma \ref{szimmutak}, an increase in $w$ along one of the two paths implies a decrease in $w$ along the other one, whence the total sum in the right hand side (hence  $w(\elll)$ too) cannot be positive, 
     and it equals $(-1)$-times the number of symmetric pairs of edges $((x^k, x^{k+1}), (\overline{x}^{t-k-1}, \overline{x}^{t-k}))$ with decreasing $w$ along both.
    \end{proof}

\begin{theorem}\label{lem:w=0glm}
(a)    If $\elll \in \bZ_{\geq 0}^r$ is a generalized local minimum point with weight $w(\elll)=0$, then
    \begin{equation}\label{eq:sym}
        w(\elll-\ell) = w(\ell) \hbox{ for any lattice point } \ell \in R(\mathbf{0}, \elll) \cap \bZ^r,
    \end{equation}
    i.e.
    the weight function restricted to the rectangle $R(\mathbf{0},\elll)$ is symmetric. Furthermore if $p \geq \mm$ then $p$ is in fact a local minimum point (i.e. $p \in \mathcal{S}_{C,o}$).

 (b)    Assume that  $m(C,o) \geq 3$.
 If $\elll$ is a local minimum point with weight $w(\elll)=0$, then  $\elll\in\{ \mathbf{0},
 \mathbf{c}\}$.
\end{theorem}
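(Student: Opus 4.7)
For part (a), the symmetry is an immediate consequence of Proposition \ref{cl:glm}. Indeed, applied to our generalized local minimum $p$ with $w(p) = 0$, it asserts that for every increasing path $\{x^k\}_{k=0}^t$ from $\mathbf{0}$ to $p$, the set of ``bad edges''---those satisfying both $w(x^{k+1}) = w(x^k) - 1$ and $w(p - x^k) = w(p - x^{k+1}) - 1$---is empty. Combined with Corollary \ref{szimmutak} (primal increase forces dual decrease), this means the primal and dual increments at every edge must have opposite signs, which is equivalent to the identity $w(x^{k+1}) - w(p - x^{k+1}) = w(x^k) - w(p - x^k)$. Telescoping along the path gives $w(\ell) - w(p - \ell) = w(\mathbf{0}) - w(p) = 0$ for every $\ell$ on the path; since every lattice point of $R(\mathbf{0}, p)$ lies on some such increasing path from $\mathbf{0}$ to $p$, we conclude $w(\ell) = w(p - \ell)$ on all of $R(\mathbf{0}, p)$.

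For the second statement of (a), the plan is to show that $p - \mm$ is itself a local minimum of $w$; by Lemma \ref{lem:lm}(b) this forces $p - \mm \in \mathcal{S}_{C,o}$, and additivity of the semigroup of values then yields $p = (p-\mm) + \mm \in \mathcal{S}_{C,o}$. To verify local minimality of $p - \mm$, note that $\mm \geq \mathbf{1}$ and $p \geq \mm$ place $\mm - e_i$ in $R(\mathbf{0}, p)$ for every $i$, and place $\mm + e_i$ in $R(\mathbf{0}, p)$ exactly for the $i$ with $p_i > m_i$---which are precisely the indices where the $-e_i$ check at $p - \mm$ is nonvacuous. Since $\mm$ is the smallest nonzero semigroup element, $\overline{\Delta}_i(\mm - e_i) = \emptyset$ and hence $w(\mm - e_i) = w(\mm) + 1$; since $\mm \in \mathcal{S}_{C,o}$, also $w(\mm + e_i) = w(\mm) + 1$. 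The symmetry from the first step then transports these into $w(p - \mm \pm e_i) = w(p - \mm) + 1$ whenever the relevant neighbor is defined, so $p - \mm$ satisfies the local minimum conditions in all admissible directions.

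For part (b), assume $p$ is a local minimum of weight $0$ with $p \neq \mathbf{0}$. Then $p \in \mathcal{S}_{C,o} \setminus \{\mathbf{0}\}$ forces $p \geq \mm$, and $w(\mm) = 2 - m < 0 = w(p)$ under $m \geq 3$ rules out $p = \mm$, so $p > \mm$. Suppose for contradiction $p \lneq \mathbf{c}$. The intention is to combine the symmetry from part (a) with iterated applications of Lemma \ref{gyengeGor} using $\ell = k\mm \in \mathcal{S}_{C,o}$ (valid by additivity), which show that $p - k\mm$ is a generalized local minimum for each admissible $k$, and then to exploit the defining property of $\mathbf{c}$ as the smallest lattice point with $\mathbf{c} + \bZ_{\geq 0}^r \subset \mathcal{S}_{C,o}$ to exhibit an element of $\Delta(p - \mathbf{1})$, thereby contradicting Lemma \ref{lem:lm}(a). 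The main obstacle is that the symmetry $w(\ell) = w(p - \ell)$ is only available on $R(\mathbf{0}, p)$, whereas the failure of $p = \mathbf{c}$ occurs in the complementary region $\{p \lneq \ell \leq \mathbf{c}\}$; bridging the two regions will require a careful case analysis on the indices $\{i : p_i < c_i\}$ and on where along the paths the missing semigroup elements can be detected.
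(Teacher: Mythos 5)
Your part (a) is correct and follows the paper's own route: the symmetry is extracted from the vanishing of the edge count in Proposition \ref{cl:glm} together with Corollary \ref{szimmutak} (the paper phrases this as the identity $w(\elll-\ell-e_i)-w(\elll-\ell)=w(\ell+e_i)-w(\ell)$ followed by induction on $|\ell|$, which is your telescoping in disguise), and the membership $\elll\in\mathcal{S}_{C,o}$ is obtained exactly as you do, by transporting the local-minimum property of $\mm$ to $\elll-\mm$ via the symmetry. Your bookkeeping of which neighbours $\elll-\mm\pm e_i$ lie in $R(\mathbf{0},\elll)$ is a welcome elaboration of a step the paper leaves implicit.

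Part (b), however, is not a proof: you state an ``intention'' and then explicitly flag an unresolved obstacle (bridging $R(\mathbf{0},\elll)$ with the region $\{\elll\lneq\ell\leq\mathbf{c}\}$), and the route you propose is unlikely to close. Deriving a contradiction from $\elll\lneq\mathbf{c}$ by exhibiting an element of $\Delta(\elll-\mathbf{1})$ has no evident mechanism: the failure of $\elll+\bZ_{\geq 0}^r\subset\mathcal{S}_{C,o}$ only tells you that \emph{some} $\ell\geq\elll$ is missing from the semigroup, and there is no direct way to convert that into a semigroup element sitting in $\Delta(\elll-\mathbf{1})$. Likewise, knowing that $\elll-k\mm$ is a generalized local minimum for each $k$ (which does follow from Lemma \ref{gyengeGor}) gives information \emph{below} $\elll$, not in the region where the hypothesis $\elll\neq\mathbf{c}$ lives. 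The paper's argument runs in the opposite direction, \emph{above} $\elll$: the symmetry of part (a) plus $\mathcal{S}_{C,o}\cap(R(\mathbf{0},\elll)\setminus R(\mm,\elll))=\{\mathbf{0}\}$ identifies $\mathcal{S}_{C,o}\cap R(\elll-\mm,\elll)$ as the full box minus the points $\elll-e_1,\dots,\elll-e_r$; translating by $\mm\in\mathcal{S}_{C,o}$ shows $R(\elll,\elll+\mm)\cap\bZ^r\setminus\{\elll+\mm-e_i\}_i\subseteq\mathcal{S}_{C,o}$. The hypothesis $m(C,o)\geq 3$ then enters essentially: if \emph{all} $\elll+\mm-e_i$ were missing, $\elll+\mm$ would be a local minimum of weight $m(C,o)-2\geq 1$, contradicting Proposition \ref{cl:glm}; so at least one is present, which yields an increasing semigroup path from $\elll$ to $\elll+\mm$, and its $\mm$-periodic iteration is an infinite increasing path in $\mathcal{S}_{C,o}$ starting at $\elll$. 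Lemma \ref{lem:chcond} then gives $\elll\geq\mathbf{c}$, and Corollary \ref{cor:kisebb} gives the reverse inequality. Note that in your sketch the hypothesis $m(C,o)\geq 3$ is used only to exclude $\elll=\mm$, which is not where it is actually needed; without the step isolating a semigroup element among the $\elll+\mm-e_i$, the argument cannot reach the conductor.
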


\begin{proof} \textit{(a)} First we verify that
 the converse implication of the statement (\ref{eq:ujszam})  also holds.

Consider an  increasing path $\gamma=\{ x^k\}_{k=0}^t \subset R(\mathbf{0}, \elll)$ with $x^0=\mathbf{0}$ and $x^t=\elll$. Then  using  (\ref{eq:w(glm)})  and the vanishing $w(\elll)=0$
     we obtain  the following identity for any $0 \leq k < t$:
\begin{equation*}
    w(\elll-x^{k})-w(\elll-x^{k+1})=-\big(w(x^{k+1})-w(x^k)\big).
\end{equation*}
However, for any pair of lattice points $\ell, \ell+e_i \in R(\mathbf{0}, \elll) \cap \mathbb{Z}^r$ we can choose an increasing path $\gamma$ from $\mathbf{0}$ to $\elll$
containing both $\ell$ and $\ell + e_i$, and thus
\begin{equation}\label{eq:dwsymhaGor}
    w(\elll-\ell-e_i)-w(\elll-\ell)=w(\ell+e_i)-w(\ell) \ \hbox{ for any } \ \ell, \ell + e_i \in R(\mathbf{0}, \elll) \cap \mathbb{Z}^r.
\end{equation}

Next, we prove the symmetry (\ref{eq:sym})
by induction on  $|\ell|=\sum_i \ell_i$ for the lattice points
$\ell \in R(\mathbf{0}, \elll) \cap \mathbb{Z}^r$. The starting case of the induction
 ($|\ell|=0$)
is just the assumption: $w(\elll)=0= w(\mathbf{0})$.

Suppose we have $w(\elll - \ell)= w(\ell)$ for every lattice point $\ell \in R(\mathbf{0}, \elll) \cap \bZ^r$ with $|\ell|\leq j$. Now, any lattice point $\ell'\in R(\mathbf{0}, \elll) \cap \bZ^r$ with $|\ell'|= j+1$ can be written as $\ell + e_i$ for some $\ell \in R(\mathbf{0}, \elll) \cap \bZ^r$ with $|\ell|= j$. Then $w(\elll - \ell)= w(\ell) $ and by (\ref{eq:dwsymhaGor}) we get
$w(\elll-\ell-e_i)=w(\ell+e_i)$.

Finally, we show that $p \in \mathcal{S}_{C,o}$ if $p \geq \mm$. Using the symmetry of $w$ established above, $\mm$ being a local minimum point implies that $p-\mm$ is a local minimum point as well. In particular, $p-\mm \in \mathcal{S}_{C,o}$, whence $p = (p-\mm) +\mm \in \mathcal{S}_{C,o}$.

\vspace{2mm}



 \textit{(b)} Assume that $\elll\geq \mm$.
 Since $\elll$ is a generalized local minimum point with weight $w(\elll) = 0$,
 by part \textit{(a)} of this Theorem \ref{lem:w=0glm}, the weight function is symmetric in the rectangle $R(\mathbf{0}, \elll)\cap \mathbb{Z}^r$, i.e. $w(\ell) = w(\elll-\ell)$ for all $\ell \in R(\mathbf{0},\elll) \cap \bZ^r$.

    As $\mathcal{S}_{C,o}\cap \big(R(\mathbf{0},\elll) \setminus R(\mm,\elll)\big) = \{\mathbf{0}\}$ (see subsection \ref{ss:multlocmin}), we have that $w(\mathbf{0}) = 0$ and $w(\ell) = 2-|\ell|$ for any $\ell \in R(\mathbf{0},\mm)\cap \bZ^r \setminus \{ \mathbf{0} \}$. Now, using the symmetry of $w$ 
    on $R(\mathbf{0},\elll)$ 
    and the fact that $p\in \mathcal{S}_{C,o}$, then via (\ref{eq:Sfromh}) and property (2) of subsection \ref{rem:semigroup} we get
    $$\mathcal{S}_{C,o}\cap R(\elll-\mm,\elll) = R(\elll-\mm,\elll)\cap\bZ^r
    \setminus \{\elll-e_i, \dots, \elll-e_r \}.$$
    By adding the multiplicity vector $\mm\in \mathcal{S}_{C,o}$ to every element of this set, we get that
    $$R(\elll, \elll+\mm)\cap\bZ^r \setminus \{ \elll+\mm-e_1, \dots, \elll+\mm-e_r\} \subseteq \mathcal{S}_{C,o}.$$

    Notice that $\elll+\mm-e_i \in \mathcal{S}_{C,o}$ for at least one $i \in \mathcal{I}$: if this was not the case, then by property (2) of subsection \ref{rem:semigroup} and Lemma \ref{lem:lm}, $\elll + \mm$ would be a local minimum point with $w(\elll+\mm) = m(C,o)-2 \geq 1$ contradicting Proposition \ref{cl:glm}. (This is where we use the assumption $m(C,o) \geq 3$.)
    This `new' semigroup element yields an increasing path  $\gamma=\{ x^k\}_{k=0}^m$ from $\elll$ to $\elll + \mm$ with $x^{m-1} = p+\mm-e_i$ consisting of semigroup elements only. It follows that
    $\bigcup_{k =0}^\infty (k\cdot \mm + \gamma )$
    is an infinite increasing path starting at the point $p$ and consisting of only semigroup elements. 
     Then  $\elll \geq \mathbf{c}$ by Lemma \ref{lem:chcond}. But by Corollary \ref{cor:kisebb}
     $\elll \leq \mathbf{c}$ too.
\end{proof}

\begin{example}\label{ex:m=2}
 If $m(C,o) =2$, i.e. $(C,o)$ is the plane curve singularity $x^2+y^k$ ($k\geq 2$), then there can be more local minimum points with weight $w(\elll)=0$ (in fact, they are multiples of $\mm$).
\end{example}

\section{First application. The Gorenstein property}

\subsection{Gorenstein symmetry of the weight function}\label{ss:GORduality} In this subsection we prove two  characterizations for Gorenstein curve singularities. 
The first one, Theorem \ref{th:Gorchar},  shows the relevance of the weight function $w$ and its strong connection with the semigroup of values and the Hilbert function. The equivalences  \textit{(1)} $\Leftrightarrow$ \textit{(2)} $\Leftrightarrow$ \textit{(5)} are known (see references in the proof), parts
 \textit{(3)}--\textit{(4)}  are new. They formulate the Gorenstein  property 
 in a natural and useful way in terms of $w$.

\begin{theorem}\label{th:Gorchar}
    Let $(C, o)$ be a curve singularity. Then the following conditions are equivalent:
    \begin{enumerate}
        \item $(C, o)$ is Gorenstein, i.e. the local ring $\mathcal{O}_{C,o}$ is Gorenstein;
        \item $\dim_{\mathbb{C}} (\overline{\mathcal{O}}/\mathcal{O})=\dim_{\mathbb{C}}(\mathcal{O}/\mathfrak{c})$;
        \item $w(\mathbf{c}) = w(\mathbf{0})=0$;
        \item $w$ is `Gorenstein symmetric', i.e. $w(\mathbf{c}- \ell) = w(\ell)$ for any lattice point $\ell \in R(\mathbf{0}, \mathbf{c}) \cap \mathbb{Z}^r$;
        \item the semigroup of values $\mathcal{S}_{C,o}$ is symmetric (in the sense of Garcia-Delgado), i.e.
        \begin{equation*}
            \ell\in\mathcal{S}_{C,o}\ \Leftrightarrow \ \Delta(\mathbf{c}-{\bf 1}-\ell)=
            \emptyset.
        \end{equation*}
        Specifically, for irreducible curves the semigroup $\mathcal{S}_{C,o}$ is symmetric if and only if
        \begin{equation*}
            \ell\in\mathcal{S}_{C,o}\ \Leftrightarrow \ \mathbf{c}- 1-\ell\notin \mathcal{S}_{C,o}.
        \end{equation*}
    \end{enumerate}
In particular, if $(C,o)$ is Gorenstein, then by {\it (4)} the involution
$\ell\leftrightarrow \mathbf{c}-\ell$ induces a $\bZ_2$--symmetry of the graded root and of the lattice cohomology $\bH^*$  as well.
\end{theorem}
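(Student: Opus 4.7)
The plan is to establish the cycle of implications through the chain (1) $\Leftrightarrow$ (2) $\Leftrightarrow$ (3) $\Leftrightarrow$ (4), and separately (2) $\Leftrightarrow$ (5), exploiting that most of the hard work has already been done in the preceding sections. The equivalences (1) $\Leftrightarrow$ (2) and (2) $\Leftrightarrow$ (5) are classical: the first is the Serre/Bass characterization of one-dimensional Gorenstein rings (namely, $(C,o)$ is Gorenstein iff $\mathcal{O}$ is self-dual as a module over itself, which for curves is equivalent to the length equality in (2)), while the second is Delgado-de la Mata's symmetry theorem from \cite{delaMata87,delaMata}. I would state these with explicit references and move on.

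The bridge between the classical and the new content is the equivalence (2) $\Leftrightarrow$ (3), which is essentially a translation. By subsection \ref{ss:cond} we have $\dim_\bC \mathcal{O}/\mathfrak{c} = \hh(\mathbf{c})$ and $\dim_\bC \overline{\mathcal{O}}/\mathcal{O} = \delta(C,o) = |\mathbf{c}| - \hh(\mathbf{c})$. Hence the length equality in (2) reads $|\mathbf{c}| - \hh(\mathbf{c}) = \hh(\mathbf{c})$, i.e.\ $2\hh(\mathbf{c}) - |\mathbf{c}| = 0$, which by the very definition (\ref{eq:w0}) of the weight function is $w_0(\mathbf{c}) = 0 = w_0(\mathbf{0})$.

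For (3) $\Leftrightarrow$ (4), the implication (4) $\Rightarrow$ (3) is immediate by specializing at $\ell = \mathbf{0}$. For (3) $\Rightarrow$ (4), the key observation is that by Example \ref{ex:c} (which uses property (5) from subsection \ref{rem:semigroup}, namely $\Delta(\mathbf{c}-\mathbf{1}) = \emptyset$) the conductor $\mathbf{c}$ is always a generalized local minimum point of $w$. Therefore under the assumption $w(\mathbf{c}) = 0$, Theorem \ref{lem:w=0glm}(a) applies directly to $\elll = \mathbf{c}$ and yields exactly the symmetry $w(\mathbf{c} - \ell) = w(\ell)$ on $R(\mathbf{0}, \mathbf{c}) \cap \bZ^r$. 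This is the conceptually new step, and it is the reason why the whole technical machinery of generalized local minima was developed in Section 3.

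Finally, for the last assertion: granted (4), the involution $\iota : \ell \mapsto \mathbf{c} - \ell$ is a cellular self-map of the cube complex structure on $R(\mathbf{0}, \mathbf{c})$ that preserves $w_0$, hence extends via the formula (\ref{eq:w_qdef}) to an involution preserving each $w_q$ and therefore each sublevel set $S_n \cap R(\mathbf{0}, \mathbf{c})$. By Theorem \ref{th:EUcurves}, this recovers a $\bZ_2$-action on the computing complex for $\bH^*(C,o)$ and on the graded root $\mathfrak{R}(C,o)$. The expected main obstacle is not computational but structural: pinning down precisely which of the ``classical'' parts are cited and which are to be reproved, and making sure the application of Theorem \ref{lem:w=0glm}(a) to $\elll = \mathbf{c}$ is unambiguous (in particular that $\mathbf{c}$ genuinely qualifies as a generalized local minimum, which requires citing the semigroup property (5) and hence the good-semigroup framework of subsection \ref{rem:semigroup}).
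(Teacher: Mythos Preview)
Your proposal is correct and follows essentially the same route as the paper: (1)$\Leftrightarrow$(2) via Serre/Bass, (2)$\Leftrightarrow$(5) via Kunz/Garcia/Delgado, (2)$\Leftrightarrow$(3) by the direct computation $w(\mathbf{c})=2\hh(\mathbf{c})-|\mathbf{c}|$, and (3)$\Rightarrow$(4) by applying Theorem~\ref{lem:w=0glm}(a) to the generalized local minimum $\mathbf{c}$ (via Example~\ref{ex:c}). The only addition in the paper is that, after completing this cycle, it also supplies short self-contained proofs of (4)$\Rightarrow$(5) and (5)$\Rightarrow$(3) using increasing paths, so as to make (2)$\Leftrightarrow$(3)$\Leftrightarrow$(4)$\Leftrightarrow$(5) independent of the cited literature; this is explicitly flagged as optional and does not affect the validity of your argument.
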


\begin{proof}
By Serre \cite{Serre} or Bass  \cite{Bass} (see also \cite{Huneke}) $(C,o)$ is Gorenstein if and only if
$\dim (\overline{\mathcal{O}}/\mathcal{O})=\dim(\mathcal{O}/\mathfrak{c})$, i.e. \textit{(1)} $\Leftrightarrow$ \textit{(2)}. Also, \textit{(2)} $\Leftrightarrow$ \textit{(3)} follows  as (see also subsection \ref{ss:cond})
\begin{align*}
    \dim (\overline{\mathcal{O}}/\mathcal{O})=\dim(\mathcal{O}/\mathfrak{c}) & \Leftrightarrow  \dim (\overline{\mathcal{O}}/\mathfrak{c}) = 2  \dim(\mathcal{O}/\mathfrak{c}) \\
    & \Leftrightarrow |\mathbf{c} | = 2  \mathfrak{h}(\mathbf{c}) \\
    & \Leftrightarrow w(\mathbf{c})=0. \label{eq:w(c)=0haGor}
\end{align*}

The equivalence \textit{(2)} $\Leftrightarrow$ \textit{(5)} was proved by Kunz ($r=1$)
\cite{Kunz},
Garcia and Waldi \cite{waldi,Garcia} ($r=2$) and Delgado ($r\geq 2$) \cite{delaMata}.
See also \cite{cdk}.

The \textit{(4)} $\Rightarrow$ \textit{(3)} implication is trivial,
 \textit{(3)} $\Rightarrow$ \textit{(4)}
is just  Theorem \ref{lem:w=0glm}\textit{(a)} (via Example  \ref{ex:c}).

\vspace{2mm}

 This basically ends the proof of the theorem. However, 
we give below two direct and short proofs for the \textit{(4)} $\Rightarrow$ \textit{(5)}
and \textit{(5)} $\Rightarrow$ \textit{(3)} implications based on
 the tool of increasing paths and the
correspondence between the  weight function $w$ and the semigroup $\mathcal{S}_{C,o}$.
(In this way we provide a self-contained proof for
\textit{(2)} $\Leftrightarrow$ \textit{(3)} $\Leftrightarrow$
\textit{(4)} $\Leftrightarrow$ \textit{(5)} replacing the proofs from
\cite{Kunz,waldi,Garcia,delaMata}.
The implication \textit{(1)} $\Rightarrow$ \textit{(4)} was  proved in \cite{AgNeIV} as well.)

\vspace{2mm}

\textit{(4)} $\Rightarrow$ \textit{(5)}
    By (\ref{eq:Sfromh}) and (\ref{eq:w0valtozasa}) $\ell$ is a semigroup element if and only if $w(\ell + e_i) > w(\ell)$ for every $i \in \mathcal{I}$. But, by the $\ell \leftrightarrow \mathbf{c}- \ell$ symmetry of the weight function, this is equivalent to the fact that $\mathbf{c}- \ell$ is a generalized local minimum point, i.e. by Lemma \ref{lem:lm}\textit{(a)}
    $\Delta(\mathbf{c} - \mathbf{1} -\ell) = \emptyset$.

  \textit{(5)} $\Rightarrow$ \textit{(3)}. For any $\ell \in R(\mathbf{0}, \mathbf{c}) \cap \mathbb{Z}^r$ set $D w(\ell):= w(\ell)-w(\mathbf{c}-\ell)$.
  First, we construct  an increasing path $\{x^k\}_k$ connecting $\mathbf{0}$ with $\mathbf{c}$
  such that $\ell\mapsto Dw(\ell)$ is constant along this path.

  Assume that $x^k< \mathbf{c}$ was already constructed, and we wish to find $x^{k+1}=x^k+e_{i(k)}\leq \mathbf{c}$.

Set $J:=\{ j\in \mathcal{I}\,\vert \, x^k_j < c_j\}$. Note that $J\not=\emptyset$.
  Assume that there exists a certain $j\in J$ such that for
   $x^{k+1}=x^k+e_{j}$ one has
  $w( x^{k+1})> w( x^{k})$. Then, by Corollary \ref{szimmutak}
 we also have  $w(\mathbf{c}- x^{k})<
 w(\mathbf{c}- x^{k+1})$.
 Hence with the choice   $x^{k+1}=x^k+e_{j}$ we have $x^{k+1}\leq \mathbf{c}$ and
$ D w(x^{k+1})= D w(x^{k})$ (see \ref{eq:w0valtozasa}).

Now assume the contrary, i.e.  $w( x^{k}+e_j)< w( x^{k})$ for every $j\in J$. Then $ x^{k}\not\in \calS_{C,o}$.
 Hence, by  \textit{(5)},
 there exists some $i$  such that
 $(\dag)$ $\Delta_i (\mathbf{c}-\mathbf{1}-x^{k})\not = \emptyset$. Note that for any
  $l\not\in J$ one has  $\Delta_l (\mathbf{c}-\mathbf{1}-x^{k}) = \emptyset$.
 Hence,  such an $i$ with $(\dag)$ should be in $J$.
  But this implies
  that  $\overline{\Delta}_i (\mathbf{c}-e_i-x^{k})\not = \emptyset$, hence
  $w(\mathbf{c}- e_i-x^{k})<
 w(\mathbf{c}- x^{k})$. For such an $i$ set  $x^{k+1}=x^k+e_{i}$, hence again
 $x^{k+1}\leq \mathbf{c}$ and
 $ D w(x^{k+1})= D w(x^{k})$ by Corollary \ref{szimmutak}.

 Then, by the existence of such a path, $ D w(\mathbf{0})= D w(\mathbf{c})$, i.e. $ -w(\mathbf{c})=
  w(\mathbf{c})$, hence  $w(\mathbf{c})=0$.
\end{proof}

\begin{example}\label{ex:non}
It is a slightly surprising fact that the Gorenstein property is non-hereditary. That is, from the
fact that $(C,o)=\bigcup_{i\in\mathcal{I}} (C_i,o)$ is Gorenstein, it does not necessarily  follow that
any $(C_J, o)=\bigcup_{i\in J} (C_i,o)$ ($\emptyset\not=J\subset \mathcal {I}$) is Gorenstein.

Assume for example that $(C,o)$ is the union of four generic lines in $(\mathbb{C}^3,0)$. It can be
realized as the zero set of two homogeneous quadratic equations. In particular,
 it is a complete intersection, hence Gorenstein. On the other hand, the union of any three generic lines $(C',o)$  in $(\mathbb{C}^3,0)$ is $\vee_{i=1}^3 (\bC,0)$, which  is
 not Gorenstein.
  E.g., the conductor  is $(1,1,1)$ with $w(1,1,1)=-1$.
 (The reader  is invited to work out and compare the semigroups and  $w$--tables of $(C,o)$ and $(C',o)$.)


\end{example}
%

\subsection{Gorenstein property and lattice cohomology} Next we prove that the graded $\bZ[U]$-module $\bH^0(C,o)$ of a reduced curve singularity $(C,o)$ determines whether
$(C,o)$ is Gorenstein.

\begin{theorem}\label{thm:latgor}
    The reduced  curve singularity $(C,o)$ is Gorenstein if and only if
    $$\textit{either} \ \ \ \bH^0_{\text{\normalfont red}}(C,o) = 0 \ \  \ \textit{or} \ \ \
    \text{\normalfont rank}_\bZ \ker\big( U : \bH^0_0(C,o) \to \bH^0_{-2}(C,o) \big) \geq 2.$$
\end{theorem}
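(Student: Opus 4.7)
The plan is to translate the statement into the language of local minima of the weight function, using the bijective correspondence from subsection \ref{locmincohomology}, namely
\[
    \text{rank}_\bZ \ker\big( U : \bH^0_{2n} \to \bH^0_{2n-2} \big) = \#\{\text{local minimum points of $w$ with weight } n\}.
\]
Specializing to $n=0$, the theorem becomes the combinatorial assertion that $(C,o)$ is Gorenstein if and only if either $w$ admits no nonzero local minimum point (which, together with Proposition \ref{thm:nonnegw}, corresponds to $(C,o)$ being smooth) or $w$ admits at least two distinct local minimum points at weight $0$.

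I would first dispose of the low multiplicity cases. If $m(C,o) = 1$, then $(C,o)$ is smooth, hence Gorenstein and satisfies $\bH^0_{\text{red}} = 0$ by Proposition \ref{thm:nonnegw}, so both sides of the equivalence hold. If $m(C,o) = 2$, then $(C,o)$ is a plane curve singularity of type $x^2+y^k$, which is a complete intersection and hence Gorenstein; on the lattice cohomology side, $w(\mathbf{0}) = 0$ and $w(\mm) = 2-m(C,o) = 0$ by (\ref{eq:mineq}), while $\mm \neq \mathbf{0}$ is a local minimum as the smallest nonzero element of $\mathcal{S}_{C,o}$ (Example \ref{ex:c}), so rank $\geq 2$ and the equivalence again holds.

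For the main case $m(C,o) \geq 3$, assume first that $(C,o)$ is Gorenstein. By Theorem \ref{th:Gorchar}{\it (3)}, $w(\mathbf{c}) = 0$, and Example \ref{ex:c} guarantees that both $\mathbf{0}$ and $\mathbf{c}$ are local minimum points of $w$; since $(C,o)$ is not smooth, $\mathbf{c} \geq \mm \neq \mathbf{0}$, so these two local minima at weight $0$ are distinct, giving rank $\geq 2$. Conversely, assume that $w$ admits at least two local minimum points at weight $0$; one of them is always $\mathbf{0}$, and Theorem \ref{lem:w=0glm}{\it (b)} forces the other to lie in $\{\mathbf{0},\mathbf{c}\}$, hence to be $\mathbf{c}$. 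This yields $w(\mathbf{c}) = 0$, and Theorem \ref{th:Gorchar} then concludes that $(C,o)$ is Gorenstein.

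The proof is essentially an organized bookkeeping exercise once the correspondence between kernel ranks and local minima is invoked; the substantive input is Theorem \ref{lem:w=0glm}{\it (b)}, whose establishment was the real technical heart of Section 3. The only mildly delicate point is the $m(C,o)=2$ case, in which Theorem \ref{lem:w=0glm}{\it (b)} fails (cf.\ Example \ref{ex:m=2}): here Gorensteiness is free (complete intersection), and one just has to exhibit two distinct weight-$0$ local minima, namely $\mathbf{0}$ and $\mm$.
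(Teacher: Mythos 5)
Your proposal is correct and follows essentially the paper's own argument: the same translation of $\operatorname{rank}_\bZ\ker U$ at weight $0$ into the count of weight-$0$ local minima, the same use of Theorem \ref{lem:w=0glm}\textit{(b)} to force any nonzero weight-$0$ local minimum to be $\mathbf{c}$ when $m(C,o)\geq 3$, and the same appeal to Theorem \ref{th:Gorchar}\textit{(3)} and Proposition \ref{thm:nonnegw} (the paper merely phrases the ``Gorenstein $\Rightarrow$ rank $\geq 2$'' direction in contrapositive form). No substantive difference.
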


The more detailed version of this theorem states the following.

\begin{theorem}\label{thm:latgortechn}
Let $(C, o)$ be a reduced  curve singularity. Then we have the following facts.
\begin{itemize}
    \item[(a)] If $\bH^0_{\text{\normalfont red}}(C,o) = 0$, then $(C,o)$ is smooth, hence Gorenstein.
    \item[(b)] If $\bH^0_{\text{\normalfont red}}(C,o) \not= 0$ but $\bH^0_{2n}(C,o) = 0$ for all $n < 0$, then $m(C,o)=2$, $(C,o)$ is a plane curve singularity
        (hence Gorenstein)
      and  $\text{\normalfont rank}_\bZ \ker\big( U : \bH^0_0(C,o) \to \bH^0_{-2}(C,o) \big) \geq 2$.
    \item[(c)] If $\bH^0_{2n}(C,o) \not= 0$ for some $n<0$ and $\text{\normalfont rank}_\bZ \ker\big( U : \bH^0_0(C,o) \to \bH^0_{-2}(C,o) \big) = 2$, then $(C,o)$ is Gorenstein.
    \item[(d)] In any other case $\text{\normalfont rank}_\bZ \ker\big( U : \bH^0_0(C,o) \to \bH^0_{-2}(C,o) \big) = 1$ and $(C,o)$ is not Gorenstein.
\end{itemize}

In cases {\rm (b)} and {\rm (c)} $\ker\big( U : \bH^0_0(C,o) \to \bH^0_{-2}(C,o) \big)$ contains the free
$\mathbb{Z}$-submodule of rank 2 generated by connected components of $S_0$ corresponding to the local minimum points $\mathbf{0}$ and $\mathbf{c}$.
\end{theorem}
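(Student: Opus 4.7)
My plan is to reduce every case to the local-minimum/graded-root dictionary established in subsection \ref{locmincohomology}, combined with the Gorenstein characterization $w(\mathbf{c})=0$ from Theorem \ref{th:Gorchar} and the rigidity statement Theorem \ref{lem:w=0glm}(b). The central observation is that
\[
\text{rank}_\bZ \ker\bigl(U : \bH^0_0(C,o) \to \bH^0_{-2}(C,o)\bigr) = \#\{\text{local minima of } w \text{ with weight } 0\},
\]
so the entire theorem becomes a question of counting weight--$0$ local minima and identifying them geometrically. Two are always available: the origin $\mathbf{0}$ (trivially) and the conductor $\mathbf{c}$ (Example \ref{ex:c}), and by Theorem \ref{th:Gorchar}(3) the latter has weight $0$ precisely when $(C,o)$ is Gorenstein.

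Case \textit{(a)} is immediate from Proposition \ref{thm:nonnegw}(b): $\bH^0_{\rm red}=0$ forces $(C,o)$ smooth, hence trivially Gorenstein. For case \textit{(b)}, Proposition \ref{thm:nonnegw}(c) yields $m(C,o)=2$, which via Abhyankar's inequality (\ref{eq:Ab}) forces $(C,o)$ to be a plane hypersurface, hence Gorenstein; then $w(\mathbf{c})=0$ by Theorem \ref{th:Gorchar}, and since $\mathbf{c}\neq\mathbf{0}$ (the germ is singular), the two weight--$0$ local minima $\mathbf{0}$ and $\mathbf{c}$ give a rank--$2$ submodule of $\ker U$. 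Case \textit{(c)}: Proposition \ref{thm:nonnegw}(a) guarantees $m(C,o)\geq 3$, so Theorem \ref{lem:w=0glm}(b) says the \emph{only} weight--$0$ local minima are $\mathbf{0}$ and $\mathbf{c}$; thus if $\mathrm{rank}_\bZ \ker U=2$, both must actually occur as weight--$0$ local minima, giving $w(\mathbf{c})=0$ and hence Gorenstein by Theorem \ref{th:Gorchar}.

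For case \textit{(d)}, the complementary hypothesis (together with \textit{(a)}--\textit{(c)}) leaves us with $m(C,o)\geq 3$ and $\mathrm{rank}_\bZ \ker U \neq 2$. Applying Theorem \ref{lem:w=0glm}(b) again, the candidates for weight--$0$ local minima are exactly $\mathbf{0}$ and $\mathbf{c}$, so the rank is either $1$ or $2$; eliminating the latter gives rank $1$, meaning $w(\mathbf{c})\neq 0$, i.e.\ $(C,o)$ is not Gorenstein by Theorem \ref{th:Gorchar}. The final assertion that in cases \textit{(b)} and \textit{(c)} the pair $\{\mathbf{0},\mathbf{c}\}$ literally produces two distinct generating components of $S_0$ inside $\ker U$ is just the bijection of subsection \ref{locmincohomology} applied to these two specific weight--$0$ local minima.

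The main technical input that makes this clean is Theorem \ref{lem:w=0glm}(b), whose proof in turn relies on the symmetry statement \ref{lem:w=0glm}(a) and the infinite--path characterization of the conductor from Lemma \ref{lem:chcond}; the delicate point there is the use of the hypothesis $m(C,o)\geq 3$ to force at least one of the shifted points $\elll+\mathbf{m}-e_i$ to lie in $\mathcal{S}_{C,o}$, which is what allows one to extend a finite path into an infinite increasing path of semigroup elements starting at $\elll$. Once Theorem \ref{lem:w=0glm}(b) is in hand, the argument above is purely bookkeeping between the four mutually exclusive cases determined by whether $\bH^0_{\rm red}=0$, whether $\bH^0_{2n}=0$ for all $n<0$, and the rank of $\ker U$ on $\bH^0_0$.
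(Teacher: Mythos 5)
Your proposal is correct and follows essentially the same route as the paper's proof: cases (a)--(b) via Proposition \ref{thm:nonnegw} and Abhyankar's inequality, then the identification of $\text{rank}_\bZ\ker U$ on $\bH^0_0$ with the count of weight-$0$ local minima, the rigidity statement of Theorem \ref{lem:w=0glm}(b) forcing any nonzero weight-$0$ local minimum to be $\mathbf{c}$, and the criterion $w(\mathbf{c})=0$ of Theorem \ref{th:Gorchar}(3). The only difference is that you spell out the case bookkeeping in slightly more detail than the paper does, which is harmless.
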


\begin{proof}
For (a) and (b) use subsection
\ref{ss:multlocmin} and Proposition \ref{thm:nonnegw}. Otherwise, the multiplicity is $>2$ and
$\bH^0_{<0}(C,o) \not= 0$ (see again Proposition \ref{thm:nonnegw}).

If $\text{\normalfont rank}_\bZ \ker\big( U : \bH^0_0(C,o) \to \bH^0_{-2}(C,o) \big) \geq 2$, then there exists at least one non-zero local minimum point $\elll \in \bZ^r_{\geq 0}$ with $w(\elll)=0$
 (cf. subsection \ref{locmincohomology}). 
 But by Theorem \ref{lem:w=0glm}{\it (b)} any such $\elll$ must be the conductor, whence $\text{\normalfont rank}_\bZ \ker\big( U : \bH^0_0(C,o) \to \bH^0_{-2}(C,o) \big) = 2$
        (generated by $\{\mathbf{0},\mathbf{c}\}$)  and $(C,o)$ is Gorenstein by Theorem \ref{th:Gorchar}{\it (3)}.

        In any other case $\text{\normalfont rank}_\bZ \ker\big( U : \bH^0_0(C,o) \to \bH^0_{-2}(C,o) \big) = 1$, so $\mathbf{0} \in \bZ^r_{\geq 0}$ is the only local minimum with $w=0$ (see Remark \ref{rem:0locmin}), whence $w(\mathbf {c}) < 0$ and  $(C,o)$ cannot be Gorenstein again by Theorem \ref{th:Gorchar}{\it (3)}.
\end{proof}

\section{Second application. Multiplicity and lattice cohomology}\label{multlattice}

\subsection{The Multiplicity Formula}\label{ss:5.1}
Let $(C, o)$ be a reduced  curve singularity.
Our goal is to identify  $w(\mathbf {m})=2-m(C,o)$ from the graded $\bZ[U]$--module $\bH^0(C,o)$.

By  Theorem \ref{th:Main},  $\bH^*_{{\rm red}}$ is supported in nonpositive weights, i.e.   $\bH^*_{\text{red},> 0}=0$.

\begin{define} For any $(C,o)$ with $0^{th}$ lattice cohomology
$\bH^0=\bH^0(C,o)$ we define
$$
M(\bH^0)=\left \{\begin{array}{ll}
1 & \mbox{if $\bH^0_{\text{red}}=0$},\\
0 & \mbox{if $\bH^0_{\text{red}}\not =0$, $\bH^0_{<0}=0$},\\
\max\{w\,|\,  {\rm ker} (U:\bH^0_{2w}\to \bH^0_{2w-2}) \not =0,\ w<0\}&
 \mbox{if  $\bH^0_{<0}\not =0$}.\end{array}\right. $$
 We say that $(C,o)$ satisfies the `Multiplicity Formula' (in short `MF')  if
\begin{equation}\tag{MF}\label{eq:MF}
w(\mathbf {m})=M(\bH^0), \ \mbox{i.e.} \   m(C, o)=2-M(\bH^0).
\end{equation}
\end{define}

If   $\bH^0_{<0}=0$ then the  identity  $2-m(C,o)=M(\bH^0)$ holds by Proposition \ref{thm:nonnegw}. If   $\bH^0_{<0}\not =0$ (i.e. when $m(C,o)\geq 3$) any local minimum $\elll$ satisfies
$w(\elll)\leq 0$ (Proposition \ref{cl:glm}), and  if $w(\elll)=0$ then $\elll\in\{\mathbf {0},\ccc\}$ (cf. Theorem
 \ref{lem:w=0glm}\textit{(b)}).  Hence, for $m(C,o)\geq 3$,  via subsection \ref{locmincohomology},   MF is equivalent to
 $$w(\mathbf {m})=\max\{ w(\elll)\,\vert \, \mbox{$\elll$ local minimum with $w(\elll)<0$}\}
=\max\{ w(\elll)\,\vert \, \mbox{$\elll$ local minimum $\not\in\{\mathbf{0},\mathbf{c}\}$}\}
.$$
Since any local minimum is a semigroup element, for any testing $\elll$ we can assume that
$\elll\geq  \mathbf {m}$, or,  in the Gorenstein symmetric case, that $\mathbf {m}\leq \elll\leq \ccc-\mathbf {m}$.

\begin{example}\label{ex:45}

Consider the following four subsemigroups  of $\bZ_{\geq 0}$. In all these cases
$\bH^{>0}=0$ (cf. Remark \ref{rem:lhom}) hence all the lattice cohomological information is coded in the corresponding graded roots. They are listed below, where we also
note the validity of MF.


\begin{picture}(400,120)(50,-40)
\linethickness{.5pt}

\put(100,40){\circle*{3}}
\put(100,50){\circle*{3}}
\put(100,40){\circle*{3}}
\put(90,40){\circle*{3}}
\put(110,40){\circle*{3}}
\put(90,30){\circle*{3}}
\put(80,20){\circle*{3}}
\put(120,20){\circle*{3}}
\put(110,30){\circle*{3}}
\put(100,50){\line(-1,-1){10}}
\put(100,50){\line(1,-1){10}}
\put(100,40){\line(0,1){15}}
\put(100,40){\line(-1,-1){20}}
\put(100,40){\line(1,-1){20}}
\put(100,65){\makebox(0,0){$\vdots$}}

\put(80,45){\makebox{\tiny${\mathbf{0}}$}}
\put(115,45){\makebox{\tiny${\mathbf{c}}$}}
\put(76,11){\makebox{\tiny${\mathbf{m}}$}}
\put(110,11){\makebox{\tiny${\mathbf{c-m}}$}}

\put(80,-5){\makebox{$\mathcal{S}=\langle 4,5\rangle$}}
\put(86,-20){\makebox{planar}}
\put(83,-35){\makebox{MF yes}}


\put(200,40){\circle*{3}}
\put(200,50){\circle*{3}}
\put(200,40){\circle*{3}}
\put(190,40){\circle*{3}}
\put(210,40){\circle*{3}}
\put(200,20){\circle*{3}}
\put(200,30){\circle*{3}}
\put(200,50){\line(-1,-1){10}}
\put(200,50){\line(1,-1){10}}
\put(200,40){\line(0,1){15}}
\put(200,40){\line(0,-1){20}}
\put(200,65){\makebox(0,0){$\vdots$}}

\put(180,45){\makebox{\tiny${\mathbf{0}}$}}
\put(215,45){\makebox{\tiny${\mathbf{c}}$}}
\put(195,11){\makebox{\tiny${\mathbf{m}}$}}


\put(175,-5){\makebox{$\mathcal{S}=\langle 4,5,6\rangle$}}
\put(175,-20){\makebox{Gorenstein}}
\put(183,-35){\makebox{MF yes}}

\put(300,40){\circle*{3}}
\put(300,50){\circle*{3}}
\put(300,40){\circle*{3}}
\put(290,40){\circle*{3}}
\put(310,30){\circle*{3}}
\put(300,20){\circle*{3}}
\put(300,30){\circle*{3}}
\put(300,50){\line(-1,-1){10}}
\put(300,40){\line(0,1){15}}
\put(300,40){\line(0,-1){20}}
\put(300,40){\line(1,-1){10}}
\put(300,65){\makebox(0,0){$\vdots$}}

\put(280,45){\makebox{\tiny${\mathbf{0}}$}}
\put(315,35){\makebox{\tiny${\mathbf{c}}$}}
\put(295,11){\makebox{\tiny${\mathbf{m}}$}}


\put(277,-5){\makebox{$\mathcal{S}=\langle 4,5,7\rangle$}}
\put(264,-20){\makebox{ non Gorenstein}}
\put(285,-35){\makebox{MF no}}

\put(400,40){\circle*{3}}
\put(400,50){\circle*{3}}
\put(400,40){\circle*{3}}
\put(390,40){\circle*{3}}
\put(400,20){\circle*{3}}
\put(400,30){\circle*{3}}
\put(400,50){\line(-1,-1){10}}
\put(400,40){\line(0,1){15}}
\put(400,40){\line(0,-1){20}}
\put(400,65){\makebox(0,0){$\vdots$}}

\put(380,45){\makebox{\tiny${\mathbf{0}}$}}
\put(390,11){\makebox{\tiny${\mathbf{m=c}}$}}


\put(370,-5){\makebox{$\mathcal{S}=\langle 4,5,6,7\rangle$}}
\put(368,-20){\makebox{non Gorenstein}}
\put(385,-35){\makebox{MF yes}}

\put(460,40){\makebox(0,0)[0]{$n=0$}}
\put(460,30){\makebox(0,0)[0]{$n=-1$}}
\put(460,20){\makebox(0,0)[0]{$n=-2$}}
\qbezier[50](50,40)(190,40)(430,40)
\qbezier[50](50,30)(190,30)(430,30)
\qbezier[50](50,20)(190,20)(430,20)

\end{picture}

The examples are propotypes  of four families. The first one is the family of plane curves. In fact, for  {\it  irreducible plane curves} MF was verified in \cite{KNS1}.
 In the present section we will prove that MF is satisfied by non-irreducible plane curves as well.
 
The second root is a representative of non-planar Gorenstein curves. For several such curves  MF still holds, though counterexamples exist as well
(see e.g. Example \ref{ex:122}).

Among  non-Gorenstein germs it is rather easy to find counterexamples to the MF, see the third root above. However, even for certain non-Gorenstein curves MF might hold,
see e.g. the fourth example ---  though this is the extremal case when there exists only one non-zero local minimum point, namely $\mathbf{m}=\mathbf{c}$, and for such germs
MF trivially holds (compare also with Remark \ref{rem:0locmin}).

\end{example}

\begin{ex}\cite{KNS1}\label{ex:122}
Gorenstein property does not necessarily imply MF.
Indeed, in \cite[Example 1.2]{KNS1} we presented a pair of  Gorenstein semigroups of rank one (hence 
a pair of irreducible Gorenstein germs) with different multiplicities but with identical graded roots.

In particular, even for Gorenstein irreducible curves
 the multiplicity cannot be read off from $\bH^*$.

In Proposition \ref{tavolimult} we  show that in the irreducible Gorenstein case  those  local minima which are  responsible for the failure of MF are those which sit in the interval
$(m,2m)$:   if there is only one semigroup element in $(m,2m)$  then MF holds (see Theorem \ref{th:MF}). However if there are more, then MF might fail (as in
the third case of Example \ref{ex:45} or in the just mentioned example from \cite{KNS1}).

\end{ex}

\begin{example}\label{ex:decfail}MF typically does not hold for decomposable singularities (decomposable germs typically are non-Gorenstein).
We exemplify the phenomenon via $(C,o)=C_1\vee C_2$, where both $C_1 $ and $C_2$ are copies of $C_{3,4}$, the plane curve singularity $\{x^3+y^4=0\}$.
The local minima of the weight function of $(C_{3,4},o) $ are  0, $m=3$ and $c=6$ with $w$-values 0, $-1$ and 0. Let us call the local minima for each $C_i$ by $0, m_i, c_i$ ($i=1,2$).
Then, by the   structure of decomposable singularities (see Example \ref{ex:decomp}) we obtain that the non-zero local minima of $(C,o)$ are
$\mathbf{m}=(m_1,m_2)=(3,3)$,  $(m_1,c_2)=(3,6)$, $(c_1,m_2)=(6,3)$ and $\mathbf{c}=(c_1,c_2)=(6,6)$. By (\ref{eq:dec3}) their weight values are $-4,\ -3, \ -3, \ -2$ respectively. In particular,
for any non-zero local minimum point $p\neq \mm$ one has $w(\mathbf{m})< w(p)<0$. This is totally opposite to MF.
The graded root is given  below on the left:

\begin{center}
\begin{picture}(300,110)(15,5)
\linethickness{.5pt}
\put(100,80){\circle*{3}}
\put(90,80){\circle*{3}}
\put(100,90){\circle*{3}}
\put(100,70){\circle*{3}}
\put(100,60){\circle*{3}}
\put(100,50){\circle*{3}}
\put(100,40){\circle*{3}}
\put(110,50){\circle*{3}}
\put(90,50){\circle*{3}}
\put(110,60){\circle*{3}}

\put(100,90){\line(-1,-1){10}}
\put(100,95){\line(0,-1){55}}
\put(100,70){\line(1,-1){10}}
\put(100,60){\line(1,-1){10}}
\put(100,60){\line(-1,-1){10}}

\put(100,105){\makebox(0,0){$\vdots$}}

\put(51,80){\makebox(0,0)[0]{$n=0$}}

\put(51,40){\makebox(0,0)[0]{$n=-5$}}

\qbezier[20](80,80)(150,80)(220,80)

\put(82,72){\makebox{\tiny${\mathbf{0}}$}}
\put(115,60){\makebox{\tiny${\mathbf{c}}$}}
\put(95,32){\makebox{\tiny${\mathbf{m}}$}}

\put(130,100){\makebox(0,0)[0]{$\mathfrak{R}_{C_1 \vee C_2, o}$}}

\put(127,25){\makebox(0,0)[0]{MF no}}


\put(200,80){\circle*{3}}
\put(190,80){\circle*{3}}
\put(210,80){\circle*{3}}
\put(200,90){\circle*{3}}
\put(200,70){\circle*{3}}
\put(200,60){\circle*{3}}
\put(200,50){\circle*{3}}
\put(210,50){\circle*{3}}
\put(190,50){\circle*{3}}
\put(185,40){\circle*{3}}
\put(195,40){\circle*{3}}
\put(205,40){\circle*{3}}
\put(215,40){\circle*{3}}
\put(195,30){\circle*{3}}
\put(205,30){\circle*{3}}
\put(195,20){\circle*{3}}
\put(205,20){\circle*{3}}

\put(200,90){\line(-1,-1){10}}
\put(200,90){\line(1,-1){10}}
\put(200,95){\line(0,-1){45}}
\put(200,60){\line(1,-1){10}}
\put(200,60){\line(-1,-1){10}}
\put(200,50){\line(1,-2){5}}
\put(200,50){\line(-1,-2){5}}
\put(190,50){\line(-1,-2){5}}
\put(210,50){\line(1,-2){5}}
\put(195,40){\line(0,-1){20}}
\put(205,40){\line(0,-1){20}}

\put(200,105){\makebox(0,0){$\vdots$}}

\qbezier[20](80,40)(150,40)(220,40)

\put(182,72){\makebox{\tiny${\mathbf{0}}$}}
\put(210,72){\makebox{\tiny${\mathbf{c}}$}}
\put(178,32){\makebox{\tiny${\mathbf{m}}$}}


\put(260,100){\makebox(0,0)[0]{$\mathfrak{R}_{\{(x^3+y^4)(y^3+x^4)=0\},0}$}}

\put(235,25){\makebox(0,0)[0]{MF yes}}


\end{picture}
\end{center}

This example also shows, that the validity of  MF for $(C_1,o)$ and $(C_2,o)$ does not necessarily imply the validity  for their wedge $(C_1\vee C_2,o)$.


As an opposite to $(C_{3,4}\vee C_{3,4},o)$  we provide above on the right hand side the graded root of the    {\it  plane curve singularity}
 with two branches with  distinct tangent cones, both
isomorphic to $(C_{3,4},o)$.

\end{example}

\subsection{Proof of Multiplicity Formula for plane germs}
Though 
the Gorenstein property does not imply MF, still, for some minima $p$ in a `large' zone 
we have $w(p)\leq w({\bf m})$. 


\begin{proposition}\label{tavolimult} Assume that $(C,o)$ is Gorenstein, and
    let $\elll \in \mathcal{S}_{C,o}$ be a local minimum point
    with  $2\mm \leq \elll \leq \mathbf{c}-2\mm$. Then
    $w(\elll)\leq w(\mathbf{m}).$
\end{proposition}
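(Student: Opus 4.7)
\textit{Plan.} My plan is to obtain the inequality through the combinatorial formula of Proposition \ref{cl:glm} applied to the local minimum $\elll$: for any increasing path $\gamma$ from $\mathbf{0}$ to $\elll$ one has $w(\elll) = -N_\gamma$, where $N_\gamma$ is the number of symmetric decreasing pairs along $\gamma$. Since $w(\mm) = 2-m$, the claim $w(\elll)\le w(\mm)$ translates into the combinatorial assertion $N_\gamma \ge m-2$ for a well-chosen $\gamma$.

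I would pick $\gamma = \gamma_1 \cup \gamma_2$ passing through $\mm$, with $\gamma_1 : \mathbf{0} \to \mm$ of length $m$ (e.g.\ coordinate by coordinate) and $\gamma_2 : \mm \to \elll$ arbitrary. Because $w(\mathbf{0})=0$ and $w(\mm)=2-m$, the segment $\gamma_1$ contains precisely one $w$-up step and $m-1$ $w$-down steps, while the contribution of $\gamma_2$ to $N_\gamma$ is non-negative. Hence it is sufficient to show that at least $m-2$ of those $m-1$ down steps in $\gamma_1$ are symmetric decreasing for $\elll$.

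The decisive technical step is the analysis of the symmetric decreasing condition at a down step $x^k \to x^k + e_{i(k)}$ of $\gamma_1$: it amounts to $\overline{\Delta}_{i(k)}(x^k) = \emptyset$ (forward, automatic by ``down'') together with $\overline{\Delta}_{i(k)}(\elll - x^{k+1}) = \emptyset$. Applying the Gorenstein symmetry $w(\ell) = w(\mathbf{c}-\ell)$ of Theorem \ref{th:Gorchar}\textit{(4)}, the second condition transforms into $\overline{\Delta}_{i(k)}(\mathbf{c}-\elll+x^k) \ne \emptyset$. Since $\mathbf{c}-\elll$ is itself a local minimum in $\mathcal{S}_{C,o}$ (Gorenstein applied to $\elll$), for each index $i$ the set $\overline{\Delta}_{i}(\mathbf{c}-\elll)$ is non-empty and provides a witness $s^{(i)}$. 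The hypothesis $\elll \ge 2\mm$ (equivalently $\mathbf{c}-\elll \ge 2\mm$) places $\mathbf{c}-\elll+x^k$ inside a ``buffer zone'' of $R(\mathbf{0},\mathbf{c})$ where one can assemble, via the good-semigroup axioms (2)--(3) of subsection \ref{rem:semigroup} and the stability property Corollary \ref{cor:stabforw0}, a semigroup element witnessing the required non-emptiness for every $k \ge 1$.

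The main obstacle is the ``corner'' down step of $\gamma_1$ immediately following the unique up step: there the reflected position $\mathbf{c}-\elll+x^k$ is too close to $\mathbf{c}-\elll$ itself for the witness to be constructed without extra care, and this single potentially failed step is precisely what forces the final count down from $m-1$ to $m-2$. This also explains structurally why the hypothesis is $\elll \ge 2\mm$ (rather than merely $\elll \ge \mm$): one needs a uniform buffer of at least $\mm$ on both sides in the Gorenstein reflection to carry out the witnessing construction for the remaining $m-2$ pairs.
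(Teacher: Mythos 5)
Your reduction of the inequality to the count $N_\gamma=-w(\elll)$ of doubly decreasing pairs along a path through $\mm$ (via Proposition \ref{cl:glm}) is a legitimate starting point, but the central combinatorial claim --- that at least $m-2$ of the $m-1$ down steps of $\gamma_1:\mathbf{0}\to\mm$ are symmetric decreasing for $\elll$ --- is false, and the strategy collapses with it. Take the irreducible Gorenstein germ with $\mathcal{S}_{C,o}=\langle 4,9\rangle$, so $m=4$ and $\ccc=24$, and the local minimum $\elll=16$ (indeed $16\in\mathcal{S}_{C,o}$, $15\notin\mathcal{S}_{C,o}$, and $2\mm=8\le 16\le \ccc-2\mm=16$). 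For $r=1$ the dual condition at a down step $x^k\to x^{k+1}$ of $\gamma_1$ reads $\elll-x^{k+1}\notin\mathcal{S}_{C,o}$; here the three down steps require $14,13,12\notin\mathcal{S}_{C,o}$, and since $12,13\in\langle 4,9\rangle$ only one of the three pairs is doubly decreasing, not $m-2=2$. The proposition itself of course holds ($w(16)=-4\le w(4)=-2$), but the missing pairs live on the segment $\gamma_2$ from $\mm$ to $\elll$ (at $x^k=5,10,14$). Note also that the step you single out as the unique possible failure --- the one immediately after the up step --- is exactly the one that succeeds in this example, while two later steps of $\gamma_1$ fail. So the doubly decreasing pairs certifying $w(\elll)\le 2-m$ cannot be localized on the initial segment $R(\mathbf{0},\mm)$, and your (in any case only sketched) witness construction for $\overline{\Delta}_{i(k)}(\ccc-\elll+x^k)\ne\emptyset$ cannot be repaired within this scheme.

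For comparison, the paper's proof avoids this localization problem entirely. It first shows, using Lemma \ref{ablak} together with Gorenstein symmetry, that one may assume $w(\elll-\mm)\ge w(\elll)$ (replacing $\elll$ by $\ccc-\elll$ if necessary, which is also a local minimum of the same weight), and then proves $w(\elll-\mm)\le w(\mm)$ by running an increasing path inside $R(\mm,\elll-\mm)$ together with its $\elll$-dual and invoking Corollary \ref{szimmutak}: every weight increase on one path forces a decrease on the other, so the net change from $\mm$ to $\elll-\mm$, being equal to the common value of the two telescoping sums, is nonpositive. This is where the hypothesis $2\mm\le\elll\le\ccc-2\mm$ is genuinely used (to guarantee the path and the reduction step make sense), and it is a different mechanism from the one you propose.
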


In the proof we need the following Lemma.

\begin{lemma} \label{ablak}
    Let $\ell \in \mathbb{Z}_{\geq 0}^r$ and $s \in \mathcal{S}_{C,o}$. (The Gorenstein assumption is not needed here.) Then
    $$w(\ell+s)-w(\ell) \leq w(\ell + s + e_i) - w(\ell + e_i)\ \ \ \mbox{for any $i \in \mathcal{I}$.}$$
\end{lemma}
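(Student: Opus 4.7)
The plan is to translate the inequality from $w$ into the Hilbert function $\hh$ and then reduce it to a direct semigroup translation. Expanding each side using $w_0(\ell) = 2\hh(\ell) - |\ell|$, the $|\cdot|$-terms cancel (since $|\ell+s+e_i|-|\ell+e_i| = |s| = |\ell+s|-|\ell|$), so the claimed inequality is equivalent to
\begin{equation*}
\hh(\ell+s+e_i) - \hh(\ell+s) \;\geq\; \hh(\ell+e_i) - \hh(\ell).
\end{equation*}

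By (\ref{eq:hfromS}) each side lies in $\{0,1\}$ and equals $1$ precisely when $\overline{\Delta}_i(\ell+s)$, respectively $\overline{\Delta}_i(\ell)$, is nonempty. Hence it suffices to prove the implication $\overline{\Delta}_i(\ell) \not= \emptyset \ \Rightarrow\ \overline{\Delta}_i(\ell+s) \not= \emptyset$.

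For this, pick any $t \in \overline{\Delta}_i(\ell)$; by Definition \ref{def:Delta} one has $t \in \mathcal{S}_{C,o}$ with $t_i = \ell_i$ and $t_j \geq \ell_j$ for all $j \not= i$. Since $\mathcal{S}_{C,o}$ is closed under addition and $s \in \mathcal{S}_{C,o}$, we get $t+s \in \mathcal{S}_{C,o}$, while $(t+s)_i = (\ell+s)_i$ and $(t+s)_j \geq (\ell+s)_j$ for $j \not= i$. Thus $t+s \in \overline{\Delta}_i(\ell+s)$, which finishes the proof.

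There is essentially no serious obstacle: the lemma is a direct reformulation of the closure of $\mathcal{S}_{C,o}$ under addition through the semigroup/Hilbert function dictionary of subsection \ref{ss:2.2}. Let us stress that the hypothesis $s\in \mathcal{S}_{C,o}$ is indispensable: if $s$ were an arbitrary lattice point with $s_i=0$, the matroid rank inequality (\ref{eq:matroid}) applied to the pair $\ell+e_i$ and $\ell+s$ would force the opposite inequality, and only the semigroup property on $s$ reverses the sign.
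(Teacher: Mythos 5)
Your proof is correct and follows essentially the same route as the paper's: both rearrange the inequality into a comparison of increments, observe via the dictionary of subsection \ref{ss:2.2} (you use (\ref{eq:hfromS}), the paper the equivalent (\ref{eq:w0valtozasa})) that each increment is governed by the nonemptiness of $\overline{\Delta}_i$, and conclude by translating an element of $\overline{\Delta}_i(\ell)$ by $s$ to land in $\overline{\Delta}_i(\ell+s)$. Your closing remark on the interplay with the matroid inequality (\ref{eq:matroid}) is a correct (if inessential) sanity check.
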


\begin{proof}
    The formula can be rearranged as
    $$w(\ell+e_i)-w(\ell) \leq w(\ell + s + e_i) - w(\ell + s).$$
    Since both sides are equal to either $-1$ or $+1$ (cf. (\ref{eq:w0valtozasa})), the only nontrivial case is if the left hand side is equal to $+1$. This is equivalent to the existence of a semigroup element $x \in \overline{\Delta}_i(\ell) \not= \emptyset$ but then $x+s \in \overline{\Delta}_i(\ell+s) \not= \emptyset$ too, so the right hand side is equal to $+1$ as well.
\end{proof}

\begin{proof}[Proof of Proposition \ref{tavolimult}]
    First, we show that we may assume that
    \begin{equation}\label{eq:dag1}
    w(\elll-\mathbf{m}) \geq w(\elll).\end{equation}
    This follows from the fact that
     $$(i) \ w(\elll-\mathbf{m}) < w(\elll) \ \ \ \text{ and } \ \ \ (ii) \
      w(\mathbf{c}-\elll-\mathbf{m}) < w(\mathbf{c}-\elll)$$
    cannot hold simultaneously. Indeed, if {\it (i)} holds, then applying Lemma \ref{ablak} several times resulting  a positive shift with vector $\mathbf{m}$, we get that
    $w(\elll) < w(\elll+\mathbf{m})$ too. But this, by Gorenstein symmetry, is
$ w(\mathbf{c}-\elll)<   w(\mathbf{c}-\elll-\mathbf{m}) $, contradicting {\it (ii)}.
    Hence (\ref{eq:dag1}) is true either for $\elll$ or for
    $\elll'=\ccc-\elll$, where by symmetry $\elll'$ is also a local minimum and   $w(\elll)=w(\elll')$.

     Hence, we assume $ w(\elll)\leq w(\elll-\mathbf{m}) $ and thus it is sufficient to show that
     $ w(\elll-\mathbf{m})\leq w(\mathbf{m})$.

     Choose an increasing path $\{x^k\}_{k=0}^t \subset R(\mathbf{m},\elll-\mathbf{m})$ from $\mathbf{m}$ to $\elll-\mathbf{m}$ (here we use the assumption $\elll\geq
2\mathbf {m}$). Define the dual path $\{ \overline{x}^{k}\}_{k=0}^t$ given as $\overline{x}^k := \elll-x^{t-k}$. We now have
    \begin{align*}
        w(\elll-\mathbf{m})-w(\mathbf{m}) &= \sum_{k=0}^t\big( w(x^{k+1})-w(x^k)\big) = \sum_{k=0}^t\big( w(\overline{x}^{k+1})-w(\overline{x}^k)\big).
    \end{align*}
   Then by  Corollary \ref{szimmutak}, an increase in $w$ along one of the two paths implies a decrease in $w$ along the other one, whence the total difference $w(\elll-\mathbf{m})-w(\mathbf{m})$ cannot be positive.
    \end{proof}

Having in mind the examples listed above
it is hard to find out  the exact limits of MF.   In the next theorem we provide two rather large families, covering
the most important  key applications, including e.g. the plane curves singularities.

In the proof we will use the following analytic statement.

\begin{proposition}\label{th:Lcond}
Assume that $(C,o)$ is a reduced plane curve singularity  with $mult(C,o)\geq 3$. Then there exists no `linear' function
$L\in\mathfrak{m}_{C,o}\setminus \mathfrak{m}_{C,o}^2$ such that $\mathfrak{v}(L)=\ccc$.
\end{proposition}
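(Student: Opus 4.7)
The plan is to assume for contradiction that some $L \in \mathfrak{m}_{C,o} \setminus \mathfrak{m}_{C,o}^2$ satisfies $\mathfrak{v}(L) = \mathbf{c}$, and to derive an incompatibility in two steps. Because the defining equation $f$ of $(C,o)$ lies in $\mathfrak{m}_{\mathbb{C}^2,0}^m \subseteq \mathfrak{m}_{\mathbb{C}^2,0}^2$ (using $m \geq 3$), the canonical identification $\mathfrak{m}_{C,o}/\mathfrak{m}_{C,o}^2 \cong \mathfrak{m}_{\mathbb{C}^2,0}/\mathfrak{m}_{\mathbb{C}^2,0}^2$ lets me decompose $L = L_0 + P$ where $L_0 \in \mathbb{C}\{x,y\}_1$ is a non-zero linear form and $P \in \mathfrak{m}_{C,o}^2$; in particular $\mathfrak{v}_i(P) \geq 2m_i$ for every branch $C_i$.

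First, I would reduce to the case where all branches share the common tangent $\{L_0 = 0\}$. If $T_i \neq \{L_0 = 0\}$ for some $i$, then $\mathfrak{v}_i(L_0) = m_i < 2m_i \leq \mathfrak{v}_i(P)$, so $\mathfrak{v}_i(L) = m_i$; combined with $\mathfrak{v}_i(L) = c_i$ this would force $c_i = m_i$. The formula $c_i = 2\delta_i + \sum_{j \neq i} I_{ij}$, the intersection bound $I_{ij} \geq m_i m_j$, and (for $m_i \geq 2$) the plane-branch estimate $\delta_i \geq \binom{m_i}{2}$ together give $c_i \geq m_i(m-1) \geq 2m_i$ whenever $m \geq 3$, contradicting $c_i = m_i$. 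Hence every $T_i$ coincides with $\{L_0 = 0\}$; after a linear change of coordinates I may take $L = u + P$ with common tangent $\{u = 0\}$.

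Second---and this will be the main obstacle---I must rule out the common-tangent case. The plan is to parametrize each branch so that $v(t_i) = t_i^{m_i}$ and $u(t_i) = b_i t_i^{M_i} + \ldots$ with $b_i \neq 0$ and $M_i = \mathfrak{v}_i(u) > m_i$, expand $P = \sum_{j+k \geq 2} a_{jk} u^j v^k$, and analyze the constraints on $\{a_{jk}\}$ imposed by $\mathfrak{v}_i(L) \geq c_i$ on every branch. The key observation is that $P$ is a single global element, so its coefficients $\{a_{jk}\}$ are shared across all restrictions $P|_{C_i}$, whereas $u|_{C_i} - u|_{C_j}$ has order exactly $I_{ij}$ for distinct branches (the intersection multiplicity detecting where their Puiseux expansions first diverge). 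For $m \geq 3$ one also has $c_i > I_{ij}$ (either $\delta_i > 0$, or a third branch contributes to $c_i$), so the cancellation requirements on $P$ at order $I_{ij}$ for the two branches are mutually inconsistent.

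A cleaner reformulation, which I would pursue in parallel, is to prove the stronger inclusion $\mathfrak{c} \subseteq \mathfrak{m}_{C,o}^2$ for plane curve germs with $m \geq 3$; the proposition then follows at once, since $\mathfrak{v}(L) = \mathbf{c}$ implies $L \in \mathfrak{c}$, contradicting $L \notin \mathfrak{m}_{C,o}^2$. The first step above already reduces this inclusion to the common-tangent case, which the Puiseux-level analysis of the second step is designed to settle.
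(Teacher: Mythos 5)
Your Step 1 (reduction to the case where every branch is tangent to $\{L_0=0\}$) is correct and is a legitimate reduction, different from the one the paper uses: the estimate $c_i = 2\delta_i + \sum_{j\neq i}I_{ij} \geq m_i(m-1)\geq 2m_i$ for $m\geq 3$ does rule out $\mathfrak{v}_i(L)=c_i=m_i$. The problem is that Step 2, which is the entire substance of the remaining (common-tangent) case, is not a proof but a plan, and as stated it has concrete defects.

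First, your mechanism in Step 2 is a comparison between two distinct branches, so it says nothing when $r=1$. But the unibranch case with $L$ tangent to the branch is a genuine and nontrivial case (e.g.\ $y^3=x^4$ with $L=y+\cdots$): there one must show $\mathfrak{v}(L)<c$, and the paper does this by listing the possible orders of a linear form along a Puiseux parametrization, namely $\{m_1,2m_1,\dots,km_1,n_1\}$, and invoking the fact that for plane branches of multiplicity $\geq 3$ every minimal generator of the semigroup (in particular $n_1$) is strictly below the conductor. Nothing in your proposal supplies this input. Second, your supporting claim ``$c_i>I_{ij}$'' is false in general: if $C_i$ is smooth and $r=2$ then $c_i=I_{ij}$ exactly (take $C_i$ a smooth branch and $C_j$ singular with a common tangent, so that $m\geq 3$). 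Third, the assertion that ``$u|_{C_i}-u|_{C_j}$ has order exactly $I_{ij}$'' is not meaningful as written (the two restrictions are series in different parameters), and the concluding claim that the cancellation conditions on the coefficients of $P$ are ``mutually inconsistent'' is never argued; this is precisely the point that needs a proof. The stronger reformulation $\mathfrak{c}\subseteq\mathfrak{m}_{C,o}^2$ is indeed true (the paper's argument yields it), but you do not prove it either.

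For comparison, the paper avoids all Puiseux-coefficient bookkeeping: for $r\geq 2$ it uses that $\mathfrak{m}_{C,o}/\mathfrak{m}_{C,o}^2\to\mathfrak{m}_{C_i,o}/\mathfrak{m}_{C_i,o}^2$ is an isomorphism to push $L$ down to each branch and apply the irreducible case whenever some branch is singular (giving $\mathfrak{v}_i(L)\leq c(C_i)+1 < c_i$), and then disposes of the remaining case (all branches smooth, $r\geq 3$) by the elementary inequality $(C_1,C_2)_o\geq\min\{(L,C_1)_o,(L,C_2)_o\}$, which contradicts $\mathfrak{v}_i(L)=c_i>I_{12}$. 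You would need to either import the semigroup-generator fact for the unibranch case and make the cross-branch cancellation argument precise, or switch to an argument of this second kind.
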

\begin{proof} (a) First assume that $r=1$. Fix a Puiseux parametrization
 $ x=t^{m_1}, \ y=t^{n_1} + \cdots$
 with $m_1<n_1$ and $m_1 \nmid n_1$, say $\pi$.
 Then we claim that for any $L\in\mathfrak{m}_{C,o}\setminus \mathfrak{m}_{C,o}^2$
 the intersection multiplicity $(C,L=0)_o$ (what in the sequel will be   abbreviated by     $(C,L)_o$ and  equals  
 $\mathfrak{v}(L)$ )  is an element of the set
  $\{0 ,m_1,2m_1, \dots, km_1, n_1 \}$, where
            $k$ is the maximal integer such that $km_1 < n_1$.     
Indeed, write  $L(x,y)$ as  $ax + by +$ higher order terms.
 If $a \not= 0$ then $\text{ord}_t(L\circ \pi) = m_1$.
        Otherwise we must have $b\not=0$ so $\text{ord}_t(L\circ \pi)\in \{2m_1, \dots, km_1, n_1 \}$.

 The elements of the  set  $\{0 ,m_1,2m_1, \dots, km_1, n_1 \}$  are  bounded by $n_1$, which is an element of the set of minimal generators of
$\mathcal{S}_{C,o}$. On the other hand,  for any $(C,o)$ with $m_1=mult(C,o)\geq 3$,
 it is known that  the largest  element of the minimal set of generators of $\mathcal{S}_{C,o}$ is
 strictly smaller than the Milnor number, which equals the conductor. (For properties of the generators of $\mathcal{S}_{C,o}$ see e.g. \cite{bres,wall,Zar}.) Hence if $r=1$  and  
 $mult(C,o)\geq 3$, then  $\mathfrak{v}(L)<\ccc$ for any 
 $L\in\mathfrak{m}_{C,o}\setminus \mathfrak{m}_{C,o}^2$.

 Notice also that if $r=1$ and $mult(C,o)=2$  then $n_1-1$ is the conductor, i.e. in this case $\mathfrak{v}(L)\leq \ccc+1$ for any $L$.
(If  $mult(C,o)=1$ then $\mathfrak{v}(L) $ can be any positive number.)

 (b) Next assume that $r\geq 2$. Recall that the $i^{th}$ component $c_i$ of $\ccc$ satisfies
 $c_i=\ccc(C_i,o)+(C_i, \cup_{j\not=i}C_j)_o$.
 Hence, if there is a component $(C_i,o)$ with $mult(C_i,o)\geq 2$  then $\ccc(C_i,o) +2 \leq c_i$, since $(C_i,D)_o \geq mult(C_i)\cdot mult(D) \geq 2$ for any $D$. On the other hand,
since both $(C,o)$ and $(C_i,o)$ have embedded dimension 2, the restriction $\mathfrak{m}_{C,o}/
\mathfrak{m}^2_{C,o}
\to \mathfrak{m}_{C_ i,o}/\mathfrak{m}^2_{C_ i,o}$ is an isomorphism
 and thus takes $L$ to $\mathfrak{m}_{C_ i,o}\setminus\mathfrak{m}^2_{C_ i,o}$. In particular,
 $\mathfrak{v}_i(L) \leq \ccc(C_i,o)+1$ for any $L\in\mathfrak{m}_{C,o}\setminus \mathfrak{m}_{C,o}^2$ by part (a), thus $\ccc\not=\mathfrak{v}(L)$.

Hence we can assume that all components are smooth and (since $mult(C,o)\geq 3)$) $r\geq 3$ too.

 Assume indirectly that $\mathfrak{v}(L)=\ccc$. Then $(L,C_1)_o=\mathfrak{v}_1(L)=c_1>(C_1,C_2)_o$.
 Similarly, $(L,C_2)_o>(C_1,C_2)_o$. That is, under this assumption
 \begin{equation*}\label{eq:in}
 (C_1,C_2)_o< \min\{(L,C_1)_o,(L,C_2)_o\}.
 \end{equation*}
 But this fact cannot happen. Indeed, assume that $L$ is given by $x=0$, parametrize
 $(C_1,o)$ by $\pi$ as $x=t^a+\ldots, $ $y=t^b+\ldots$, and let the equation of $(C_2,o)$ be
 $f_2=xg(x,y)+y^dh(y)=0$, $h(0)\not=0$. Then $(L,C_1)_o=a$, $(L,C_2)_o=d$, but  $\text{ord}_t(f_2\circ \pi)\geq \min\{a,d\}$.
 \end{proof}

 The main result of this section is the following.

\begin{theorem}\label{th:MF} Assume that $(C,o)$ is

(I) either irreducible, Gorenstein and it satisfies the following property:
\begin{equation}\label{eq:dag2}
\mbox{$\calS_{C,o}\cap (m,2m)$ contains at most 1 element;}\end{equation}

 (II) or, $(C,o)$ is an isolated plane curve singularity.

\noindent
Then the Multiplicity Formula holds for $(C,o)$.
\end{theorem}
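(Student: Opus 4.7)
The plan is to reduce the Multiplicity Formula to the pointwise comparison
$w(\mathbf{m})\ge w(p)$ for every local minimum $p\notin\{\mathbf{0},\mathbf{c}\}$.
By Proposition \ref{thm:nonnegw} the formula is automatic when $m(C,o)\le 2$, so I assume $m(C,o)\ge 3$; the reformulation as a comparison of local-minimum values is then exactly the one made explicit in subsection \ref{ss:5.1}. In both (I) and (II) the curve is Gorenstein --- in (II) because every plane curve singularity is a complete intersection --- so Theorem \ref{th:Gorchar} provides the symmetry $w(\mathbf{c}-\ell)=w(\ell)$ and the involution $p\leftrightarrow \mathbf{c}-p$ on local minima. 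Proposition \ref{tavolimult} then handles every local minimum $p$ with $2\mathbf{m}\le p\le \mathbf{c}-2\mathbf{m}$, and what remains is the ``boundary'' local minima.

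\smallskip
\textit{Case (I).} Here $r=1$, and any local minimum $p\ne 0,c$ lies in $[m,c-m]$. Using $p\leftrightarrow c-p$, I may reduce to $m\le p<2m$; the case $p=m$ is trivial. For $m<p<2m$, the hypothesis $|\calS_{C,o}\cap(m,2m)|\le 1$ forces $p$ to be the unique element of $\calS_{C,o}\cap(m,2m)$. Since $p$ is a local minimum, $p-1\notin\calS_{C,o}$; together with $m\in\calS_{C,o}$ this yields $p\ge m+2$. Consequently $\mathfrak{h}(p)-\mathfrak{h}(m)=|\calS_{C,o}\cap[m,p)|=1$, and
\[
w(p)-w(m)=2(\mathfrak{h}(p)-\mathfrak{h}(m))-(p-m)=2-(p-m)\le 0.
\]

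\smallskip
\textit{Case (II).} Plane curves are Gorenstein, so Proposition \ref{tavolimult} handles local minima in $[2\mathbf{m},\mathbf{c}-2\mathbf{m}]$. For a boundary local minimum $p$ (having some coordinate $p_i<2m_i$), the Gorenstein involution alone does not help, since $\mathbf{c}-p$ may still violate a different coordinate bound. To control such $p$ I would combine (i) the analytic input of Proposition \ref{th:Lcond}, which forbids $\mathfrak{v}(L)=\mathbf{c}$ for any $L\in\mathfrak{m}_{C,o}\setminus\mathfrak{m}_{C,o}^2$ when $m(C,o)\ge 3$; (ii) the good-semigroup combinatorics of subsection \ref{rem:semigroup} (in particular properties (1)--(5)); and (iii) the classical decomposition $c_i=\mathbf{c}(C_i,o)+\sum_{j\ne i}(C_i,C_j)_o$ of the plane-curve conductor, together with the stability property of Corollary \ref{cor:stabforw0}. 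These should allow a symmetric-path argument in the spirit of the one used inside Proposition \ref{tavolimult}, but adapted so that the ``dual path'' stays within an admissible rectangle whose endpoints are forced into a controllable position by Proposition \ref{th:Lcond}.

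\smallskip
\textit{Main obstacle.} The principal difficulty is Case (II). Proposition \ref{tavolimult} covers only the interior rectangle $[2\mathbf{m},\mathbf{c}-2\mathbf{m}]$, and Gorenstein symmetry is insufficient to reduce a general boundary local minimum to it: a local minimum $p$ may satisfy $p_i<2m_i$ for some $i$ while $p_j>c_j-2m_j$ for a different $j$, and then both $p$ and $\mathbf{c}-p$ escape the interior. The decisive step is therefore to translate the analytic non-vanishing statement of Proposition \ref{th:Lcond} into a combinatorial constraint on such boundary local minima that prevents $w(p)$ from exceeding $w(\mathbf{m})$.
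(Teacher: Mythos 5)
Your Case (I) is correct and is essentially the paper's argument: reduce via Proposition \ref{tavolimult} and Gorenstein symmetry to the unique semigroup element $p\in(m,2m)$, observe $p\geq m+2$ because $p-1\notin\calS_{C,o}$ while $m\in\calS_{C,o}$, and compute $w(p)-w(m)=2-(p-m)\leq 0$. The setup of Case (II) is also right as far as it goes: planarity gives Gorenstein, Proposition \ref{tavolimult} plus the involution $p\leftrightarrow\mathbf{c}-p$ reduces everything to local minima $p$ with $m_i\leq p_i<2m_i$ for some $i$, and you correctly identify that these ``boundary'' minima are the whole difficulty. But at that point your proposal stops being a proof: the ``symmetric-path argument adapted so that the dual path stays within an admissible rectangle'' is not carried out, and you yourself flag the decisive step as unresolved. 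So there is a genuine gap, and it is exactly the step you name.

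The missing idea in the paper is an \emph{induction on the number of branches} $r$, not a path argument. Since $p\not\geq 2\mm$ and every $g\in\mathfrak{m}_{C,o}^2$ has $\mathfrak{v}(g)\geq 2\mm$, the semigroup element $p$ is realized as $\mathfrak{v}(L)$ for some $L\in\mathfrak{m}_{C,o}\setminus\mathfrak{m}_{C,o}^2$. Writing $p=(p_1,\bar p)$ and $(\bar C,o)=\bigcup_{i>1}(C_i,o)$ after a suitable choice of the distinguished branch, one shows $\hh(p)=\bar\hh(\bar p)$ (by the absence of semigroup elements in the region $R((0,\bar p),(p_1-1,\infty))$, using either $p_1=m_1$ or the tangent-cone identity $\overline{\Delta}(\mm)=\{\mm\}$), hence $w(p)=\bar w(\bar p)-p_1$, and that $\bar p$ is a local minimum of $\bar w$. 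The role of Proposition \ref{th:Lcond} is then precise and combinatorial: the restriction $\mathfrak{m}_{C,o}/\mathfrak{m}_{C,o}^2\to\mathfrak{m}_{\bar C,o}/\mathfrak{m}_{\bar C,o}^2$ is an isomorphism (both germs are planar), so $L$ stays linear on $\bar C$ and therefore $\bar p=\bar{\mathfrak{v}}(L)$ \emph{cannot be the conductor} of $(\bar C,o)$; this is exactly what licenses applying the inductive hypothesis $\bar w(\bar p)\leq\bar w(\overline{\mm})$, giving $w(p)\leq\bar w(\overline{\mm})-p_1\leq w(\mm)$. Your sketch never sets up this induction or the projection $\hh(p)=\bar\hh(\bar p)$, and without it Proposition \ref{th:Lcond} has no combinatorial consequence to exploit. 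There is also a residual degenerate case the induction does not reach (two branches with $p_i=m_i+1$ for both $i$), which the paper must handle by the direct computation $\hh(p)=2$, $w(p)=2-m_1-m_2=w(\mm)$; any completed argument has to account for it.
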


\begin{proof}

(I) 
If the multiplicity is $\leq 2$ then MF holds. Hence we can assume that $mult(C,o)\geq 3$.
Then, via Proposition \ref{tavolimult},  we need to check that
$w(\elll)\leq w(m)$ for any local minimum with $m<\elll<2m$. But, by assumption,
  if such a $\elll$ exists, then it is the  unique semigroup element of $(m,2m)$, and
$\hh(\elll)=\hh(m)+1$. Hence $w(\elll)=  w(m)+2+m-\elll$. But $\elll\geq m+2$.
Indeed,  $\elll$ is a local minimum, hence $\elll-1\not\in\calS_{C, o}$,  but $m\in\calS_{C, o}$,  so necessarily  $\elll-1>m$.

(II)
If $r=1$, then $(C,o)$  is Gorenstein, and its semigroup satisfies (\ref{eq:dag2}) (for the structure of the semigroups of
plane curve singularities see e.g. \cite{bres,wall,Zar}). Hence MF is guaranteed by (I).

We proceed by induction on the cardinality $r$ of $\mathcal{I}$.
We assume that MF holds for any plane curve with less than $r$ irreducible components, and we
 consider  a plane curve $(C,o)$ with $r$   components  ($r>1$).
If its multiplicity is $\leq 2$ then there is nothing to prove, hence we assume $mult(C,o)\geq 3$.
Then we need to show that $w(\elll)\leq w({\bf m}$ for any local minimum point 
$\elll=(\elll_1,\ldots, \elll_r)\not\in\{\mathbf {0},  \ccc\}$.

Again, via Proposition \ref{tavolimult} and 
Gorenstein symmetry, 
we can assume that
$\elll \in   R(\mm,\mathbf{c}) \setminus R(2\mm, \mathbf{c})$, $\elll>\mm$.

The point $\elll$, being a semigroup element, is a $\mathfrak{v}$--value. On the other hand,
 for any $g\in \mathfrak{m}_{C,o}^2 $
 one has  $\mathfrak{v}(g)\geq 2\mathbf{m}$. Thus, our $\elll$ satisfies the following:
\begin{equation}\label{eq:L}\mbox{
there exists $L\in \mathfrak{m}_{C,o}\setminus
\mathfrak{m}_{C,o}^2$ such that $\mathfrak{v}(L)=\elll$. }
\end{equation}
Moreover, since $\elll\not\geq 2\mathbf{m}$,
 there exists some $i\in\mathcal{I}$ such that $m_i\leq \elll_i<2m_i$.

\vspace{2mm}

\underline{(i) Assume that there exists some $i\in\mathcal{I}$ such that  $\elll_i=m_i$.}

\vspace{2mm}

\noindent Let us fix some $i$ with $\elll_i=m_i$.  By re-indexing we can assume that $i=1$.
Write $\elll$ as $(\elll_1,\bar{\elll})$, where $\bar{\elll}=(\elll_2, \ldots, \elll_r)$. Below we will use the $\,\bar{\cdot}\,$
notations for invariants associated with the plane curve singularity $(\bar{C},o)=\bigcup_{i>1}(C_i,o)$.

Then $\hh(\elll_1,\bar{\elll})=\hh(0, \bar{\elll})$ by analysing the lack of semigroup elements  in the corresponding region
$ R((0, \bar{\elll}), (m_1-1, \infty))$. 
But by Proposition \ref{lem:wrestricted}, $\hh(0, \bar{\elll})=\bar{\hh}( \bar{\elll})$. Moreover, analysing the weights of the neighbours of $\elll$ and $\bar{\elll}$,
a similar argument shows that
 if $(m_1,\bar{\elll})$ is a local minimum of $w$ then $\bar{\elll}$ is a local minimum for $\bar{w}$.
Furthermore,
\begin{equation}\label{eq:=}
w((m_1, \bar{\elll}))=
2\hh((m_1, \bar{\elll}))-m_1-|\bar{\elll}|=2\bar{\hh}(\bar{\elll})-m_1-|\bar{\elll}|=\bar{w}(\bar{\elll})-m_1.\end{equation}
Here we wish to  apply induction for $(\bar{C},o)$. Note that $\bar{\elll}$ is a local minimum of $\bar{w}$ with $\bar{\elll}>\overline{\mathbf{m}}$.
In particular, $(\bar{C},o)$ cannot be smooth. If it has multiplicity 2 then $\bar{w}(\bar{\elll})=0=\bar{w}(\overline{\mathbf{m}})$.
Finally, assume that it has multiplicity $\geq 3$.  Then, as in the proof of Proposition \ref{th:Lcond}, the restriction
 $\mathfrak{m}_{C,o}/\mathfrak{m}^2_{C,o}\to \mathfrak{m}_{\bar{C},o}/ \mathfrak{m}^2_{\bar{C},o}$ is an isomorphism,  hence $L\not\in \mathfrak{m}^2_{\bar{C},o}$ and  by this  Proposition, $\bar{\elll}= \bar{\mathfrak{v}}(L)$ is not the conductor of
$(\bar{C},o)$. Thus, by the inductive step, $\bar{w}(\bar{\elll})\leq \bar{w}(\overline{\mathbf{m}})$.

This combined with (\ref{eq:=}) gives
$w(\elll)=w((m_1, \bar{\elll}))=\bar{w}(\bar{\elll})-m_1\leq \bar{w}(\overline{\mathbf{m}})-m_1=w(\mathbf{m})$.

\vspace{2mm}

\underline{(ii) Assume that  $m_j<\elll_j$ for all $j\in \mathcal {I}$.}

\vspace{2mm}

Recall that there exists  an index $i$ with  $m_i< \elll_i<2m_i$.
By property (\ref{eq:dag2}) of irreducible plane curve germs,  if such $\elll_i\in(m_i,2m_i)$ exists, it is the unique semigroup element of $\mathcal{S}_{C_i,o}$
 in $(m_i,2m_i)$. In order to run the induction,
it is convenient to choose the index $i$
such that there exists no semigroup element of  $\mathcal{S}_{C_i,o}$
 in the interval $(m_i,\elll_i)$ (even if $\elll_i=2m_i$). By our discussion such an $i$ exists.

Then,
 after re-indexing, write $i=1$ and use notations
 $(\elll_1,\bar{\elll})$ and $(\bar{C},o)=\bigcup_{i>1}(C_i,o)$ just as in case (i).

The property $\{\elll_j>m_j \ \mbox{for all $j$}\}$ says that the tangent line to the smooth germ $\{L=0\}$ agrees with the tangent line $TC_j$ for all components $(C_j,o)$. Hence all the tangent cones are the same. In particular, if for a certain semigroup element $s=\mathfrak{v}(g)$
one has $s_j=m_j$ for some $j$, then $g$  must be linear and
$g=0$ intersects $TC_j$ transversely. But then  these facts are  true for all the other components as well, hence
$s=\mathbf {m}$. In other words,
\begin{equation}\label{eq:barD}
\overline{\Delta}(\mathbf{m})=\{\mathbf {m}\}.
\end{equation}
The property  (\ref{eq:barD}) will be used in two ways: for the inductive computation of $w(\elll)$ and also
in the verification of the fact that $\bar{\elll}$ is a local minimum for $\bar{w}$ (when it is).

First, from  (\ref{eq:barD}), $\overline{\Delta}_1(m_1,\bar{\elll})=\emptyset$, so there is no 
semigroup element in the  region  
 $ R((0, \bar{\elll}), (p_1-1, \infty))$, hence 
 $\hh(\elll_1,\bar{\elll})=\hh(0, \bar{\elll})$. 
  Similarly as in case (i),  by Proposition \ref{lem:wrestricted}, $\hh(0, \bar{\elll})=
 \bar{\hh}( \bar{\elll})$.
Therefore
\begin{equation}\label{eq:=ii}
w(\elll)=w((\elll_1, \bar{\elll}))=
2\hh((\elll_1, \bar{\elll}))-\elll_1-|\bar{\elll}|=2\bar{\hh}(\bar{\elll})-\elll_1-|\bar{\elll}|
=\bar{w}(\bar{\elll})-\elll_1.\end{equation}
Next, we wish to know whether or not $\bar{\elll}$ is a local minimum for $\bar{w}$.
We split the discussion into two subcases.

{\bf Case (a)} \ Assume that  $(\bar{C},o)$ is not irreducible.
Then using the fact that there is no semigroup element of $\mathcal{S}_{C_1,o}$ in the interval $(m_1,\elll_1)$,
and $\overline{\Delta}_1(m_1,\bar{\elll}-e_j)=\emptyset$ for any $j\not=1$, we get that
$\hh(\elll\pm e_j)=\bar{\hh}(\bar{\elll}\pm e_j)$, hence $\bar{\elll}$ is indeed a local minimum point of $\bar{w}$.
 Then we apply induction as in case (i), hence $\bar{w}(\bar{\elll})\leq \bar{w}(\overline{\mathbf{m}})$.
This combined with (\ref{eq:=ii}) gives
$w(\elll)=\bar{w}(\bar{\elll})-\elll_1\leq \bar{w}(\overline{\mathbf{m}})-\elll_1\leq w(\mathbf{m})$.

{\bf Case (b)} \ Assume that  $(\bar{C},o)$ is irreducible, i.e. $(\bar{C},o)=(C_2,o)$.

Now, if $\elll_2> m_2+1$ then all the arguments of Case (a) can be repeated, hence $w(\elll)\leq w(\mathbf {m})$.

If ($\ddag$) $\elll_2=m_2+1$, then we cannot  conclude that $\elll_2$ is a local minimum of $\bar{w}$.
On the other hand, we notice that
 there exists no semigroup element of  $\mathcal{S}_{C_2,o}$
 in the interval $(m_2,\elll_2)$ (though  $\elll_2=2m_2$ might happen when $m_2=1$). Hence, in the start of case (ii), we can
 choose
 $(C_2,o)$ as the distinguished curve as well. Then, with this choice we run again the arguments of (ii), Case (b), by interchanging the two components,
 and we conclude  $w(\elll)\leq w(\mathbf {m})$  except when the other coordinate also has the pathological equality
 ($\ddag$), namely $p_1=m_1+1$.

 Hence, still we have to analyse independently the following case: $(C,o)$ has two components,
 and for both $\elll_i=m_i+1$. But in this situation we compute directly, using also (\ref{eq:barD}), that
 $\hh(\elll)=2$, hence $w(\elll)=2\hh(\elll)-\elll_1-\elll_2=2-m_1-m_2=w(\mathbf {m})$.
\end{proof}

\begin{remark}(a) Note that the validity of the Multiplicity Formula depends only on the semigroup (in fact it can be formulated for any good semigroup):
$w(\mathbf{m})$ and $\bH^0$ depend only on $\mathcal{S}$. Therefore,  via Theorem \ref{th:MF}(II),  MF holds for any curve singularity $(C,o)$
 whose semigroup is identical with the semigroup of a plane curve singularity.
The same observation is valid for part (I) of the theorem: the listed  assumptions also depend only on the semigroup.

(b) In the proof of Theorem \ref{th:MF}(II)  we had the following situation: $p=\mathfrak{v}(L)$  for a certain $L\in\mathfrak{m}_{C,o}\setminus \mathfrak{m}_{C,o}^2$,
the restriction of $L$ via the {\it isomorphism} $\mathfrak{m}_{C,o}/\mathfrak{m}^2_{C,o}\to \mathfrak{m}_{\bar{C},o}/\mathfrak{m}^2_{\bar{C},o}$  remained `linear', hence the identity
$\bar{p}=\bar{\mathfrak{v}}(L)$ (and Proposition \ref{th:Lcond} applied for $(\bar{C},o)$)  implied that $\bar{p}$ cannot be the conductor of  $(\bar{C},o)$.

Here the fact that $\mathfrak{m}_{C,o}/\mathfrak{m}^2_{C,o}\to \mathfrak{m}_{\bar{C},o}/\mathfrak{m}^2_{\bar{C},o}$  is an isomorphism (i.e. both germs $(C,o)$ and $(\bar{C},o)$ are planar)
is crucial.
Indeed, in  Example \ref{ex:decfail}  $p=(m_1,c_2)$ is realized as $\mathfrak{v}(L)$ for a  linear $L$, however the projection of $L$ is in 
 $\mathfrak{m}^2_{C_2,o}$, and even more, $c_2$ is the conductor of $(C_2,o)$.

This phenomenon (together with the lack of property (\ref{eq:barD})) obstructs the inductive step in the (present)  proof of the theorem for more general non-planar germs.
\end{remark}

\section{Third  application. The nonpositivity of the $\bH^*_{\text{red}}$ support}

\subsection{Statement of the theorem} Recall that the reduced lattice cohomology of a reduced  curve singularity $(C,o)$ has the form
$$\mathbb{H}_{\text{\normalfont red}}^q(C,o) = \bigoplus_{n\in\mathbb{Z}}\mathbb{H}_{\text{\normalfont red},\, 2n}^q(C,o).$$
In this section we will prove the following structure theorem:

\begin{theorem}[Nonpositivity theorem]\label{th:Main}
    The weight-grading of the \textit{reduced} lattice cohomology  is \textit{nonpositive}, i.e.
    $\mathbb{H}_{\text{\normalfont red},\, 2n}^q(C,o) = 0$
    for all $q\geq 0$ and $n > 0$. In other words,
\begin{equation*}
    \mathbb{H}_{\text{\normalfont red}}^q(C,o) = \bigoplus_{n\leq 0}\mathbb{H}^q_{\text{\normalfont red},\, 2n}(C,o).
\end{equation*}
\end{theorem}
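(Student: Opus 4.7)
The plan is to prove the stronger topological claim that $S_n$ is contractible for every $n\geq 1$, which immediately yields $\widetilde{H}^q(S_n,\bZ)=0$ and hence $\bH^q_{\text{red},\,2n}(C,o)=0$ for all $q\geq 0$. Since $w_0(\mathbf{0})=0\leq n$, the origin $\mathbf{0}$ lies in $S_n$ and will serve as the target of the contraction.

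The starting point is a \emph{descent lemma}: for every lattice point $\ell\in P_n:=S_n\cap\bZ^r$ with $\ell\neq\mathbf{0}$ there exists $i\in\mathcal{I}$ with $\ell_i\geq 1$ and $\ell-e_i\in P_n$. Indeed, if $w_0(\ell)\geq 1$, then by Proposition \ref{cl:glm} $\ell$ cannot be a generalized local minimum (every such point satisfies $w_0\leq 0$), so by Lemma \ref{lem:lm} there is some $i$ with $\ell_i\geq 1$ and $w_0(\ell-e_i)=w_0(\ell)-1\leq n-1$. If $w_0(\ell)\leq 0$, any $i$ with $\ell_i\geq 1$ works, since adjacent weights differ by $\pm 1$ and so $w_0(\ell-e_i)\leq w_0(\ell)+1\leq 1\leq n$. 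The hypothesis $n\geq 1$ is essential precisely in this second case: if one tried to carry out the same argument at level $n=0$, a generalized local minimum with $w_0=0$ (for instance $\mathbf{c}$ in the Gorenstein case) would halt the descent prematurely.

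To upgrade this pointwise descent to a contraction of the cubical complex $S_n$, the plan is to assemble an explicit discrete Morse matching whose only critical cell is $\{\mathbf{0}\}$. After perturbing $w_0$ by a generic linear form on lattice points (to single out a unique top vertex $t(\sigma)$ of every cube $\sigma$), attach to each $v\neq \mathbf{0}$ a canonical descent direction $i^*(v)\in\mathcal{I}$ provided by the descent lemma, and pair each cube $\sigma=(\ell_\sigma,I_\sigma)$ with the neighbouring cube obtained by toggling the presence of $e_{i^*(t(\sigma))}$ at $t(\sigma)$: the codimension-$1$ face $(\ell_\sigma+e_{i^*},I_\sigma\setminus\{i^*\})$ if $i^*\in I_\sigma$, or the dimension-$+1$ coface $(\ell_\sigma-e_{i^*},I_\sigma\cup\{i^*\})$ otherwise. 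Both cubes of a matched pair share the top vertex $t(\sigma)$ and therefore the same $i^*$, so the pairing is symmetric and well-defined. Its only unmatched cell is $\{\mathbf{0}\}$, and the monovariant $|t(\sigma)|$ strictly decreases along every alternating $V$-path: continuing a $V$-path through a ``sibling'' upper face of the matched coface $\sigma'$ fails because such a face is itself matched downward, leaving only genuine lower faces whose top lies strictly below $t(\sigma)$. Forman's theorem then delivers the desired contractibility.

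The principal obstacle in carrying out this plan is verifying that the matched cube $\sigma'$ really lies in $S_n$, i.e.\ that every new vertex $v''-e_{i^*}$ of $\sigma'$ has weight at most $n$. This is exactly where the stability property (Corollary \ref{cor:stabforw0}) enters: in the case $w_0(t(\sigma))\geq 1$ the descending edge identity $w_0(t(\sigma)-e_{i^*})=w_0(t(\sigma))-1$ propagates via stability to $w_0(v'')=w_0(v''-e_{i^*})+1$ at every vertex $v''$ of $\sigma$, so $w_0(v''-e_{i^*})=w_0(v'')-1\leq n-1$; in the complementary case $w_0(t(\sigma))\leq 0$ the a priori bound $w_0(v'')\leq w_0(t(\sigma))\leq 0\leq n-1$ already suffices. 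In this way the generalized-local-minimum bound of Section 3 (responsible for the descent reaching $\mathbf{0}$ whenever $n\geq 1$) and the stability property of Section 2 (responsible for the descent lifting to a coherent cube-level collapse) emerge as the two complementary pillars of the proof.
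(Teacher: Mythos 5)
Your vertex-level descent lemma is correct and matches the mechanism the paper uses (Proposition \ref{cl:glm} forces every generalized local minimum to have nonpositive weight, so from any $\ell$ with $w_0(\ell)\geq 1$ one can step down inside $S_n$), and the overall strategy --- contractibility of $S_n$ for $n\geq 1$ by collapsing towards $\mathbf{0}$ --- is the same as the paper's. The gap is in the passage from vertices to cubes. You claim that the descending step $w_0(t(\sigma)-e_{i^*})=w_0(t(\sigma))-1$ ``propagates via stability'' to $w_0(v''-e_{i^*})=w_0(v'')-1$ for \emph{every} vertex $v''$ of $\sigma$. Corollary \ref{cor:stabforw0} does not give this: it transports such one-step identities only along coordinatewise-monotone shifts, i.e.\ from $\ell$ to $\ell+\bar\ell$ in form (\ref{eq:genstability-}) and to $\ell-\bar\ell$ in form (\ref{eq:genstability+}). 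Since you have an \emph{increase} at $t-e_{i^*}$ in the direction $+e_{i^*}$, only the downward form applies, and it reaches exactly those vertices $v''$ with $v''\leq t(\sigma)$ coordinatewise. But $t(\sigma)$ is the \emph{weight}-maximal vertex, not the coordinatewise-maximal one, so a generic vertex $v''=\ell_\sigma+e_{K''}$ with $K''\not\subseteq K_t$ is out of reach. Consequently: (i) when $\sigma$ has several vertices of maximal weight $n$ not dominated by $t(\sigma)$, you have not shown that the coface $\sigma'$ lies in $S_n$; and (ii) in your case $w_0(t(\sigma))\leq 0$, a new vertex $v''-e_{i^*}$ may have weight $w_0(v'')+1>w_0(t(\sigma))$, so $t(\sigma')\neq t(\sigma)$ and your pairing is no longer an involution. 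Both failures break the discrete Morse matching as described.

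This propagation problem is precisely what the paper's Theorem \ref{th:gooddirection} is built to solve, and its solution is not ``any descent direction works.'' There the eliminated point $\ell$ is an M-vertex of every cube in its star, the candidate directions are restricted to $\mathcal{I}\setminus(I^+\cup I^-)$ (so that $w(\ell-e_i)<w(\ell)<w(\ell+e_i)$), and the existence of one direction valid simultaneously for \emph{all} vertices of \emph{all} such cubes is proved by contradiction: if every candidate failed, stability would push all the failures up to the single dominating point $\ell^+=\ell+e_{I^+}$, exhibiting it as a generalized local minimum of weight $\geq 2$ and contradicting Lemma \ref{lem:wl+cube}. Note that the paper propagates \emph{upward} to an auxiliary point that coordinatewise dominates the whole star --- which is why stability applies --- rather than downward from the top vertex as you attempt. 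To repair your argument you would need to import this good-direction statement (and restrict the collapse to one weight level at a time, eliminating only vertices of the current maximal weight $n\geq 2$, as the paper does), at which point your discrete Morse packaging becomes an alternative bookkeeping for the paper's explicit deformation retractions rather than a genuinely shorter proof.
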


In fact, we will prove a stronger version of Theorem \ref{th:Main}, namely:

\begin{theorem}\label{th:Main2}
    The spaces $\{S_n\}_{n \in \bZ}$ used to define $\bH^* (C,o)$ are contractible for all $n>0$.
\end{theorem}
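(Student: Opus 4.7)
My plan is to prove Theorem~\ref{th:Main2} via a discrete Morse-theoretic argument: construct an acyclic matching (discrete gradient vector field) on the cubical complex of $\bR_{\geq 0}^r$, compatible with the filtration by the weight function $w_0$, whose only critical cell inside $S_n$ (for $n\geq 1$) is the singleton $\{\mathbf{0}\}$. Forman's theorem then forces $S_n$ to be homotopy equivalent to a point, giving contractibility. The essential ingredient is Proposition~\ref{cl:glm}: every generalized local minimum of $w_0$ has weight $\leq 0$, so every lattice point $v\in\bZ_{\geq 0}^r$ with $w_0(v)>0$ admits a descent direction $i\in\mathcal{I}$ satisfying $v\geq e_i$ and $w_0(v-e_i)=w_0(v)-1$. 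The secondary ingredient is the stability property (Corollary~\ref{cor:stabforw0}), which propagates descent directions under orthogonal translation and thereby makes the matching consistent across neighboring cubes.

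The heart of the construction is the following observation: for a cube $\square=(\ell,I)$, let $v^*(\square)$ denote the lex-smallest vertex of $\square$ at which $w_0$ is maximal, written as $v^*=\ell+\sum_{j\in K^*}e_j$ with $K^*\subseteq I$; then maximality of $v^*$ combined with the $\pm 1$-property of $w_0$ on adjacent vertices forces $w_0(v^*-e_i)=w_0(v^*)-1$ for every $i\in K^*$. Whenever $K^*\neq\emptyset$, descent directions \emph{inside} the cube from $v^*$ are abundant; I would pair $\square$ with its codim-$1$ face in the smallest such direction. Cubes with $K^*=\emptyset$ (bottom corner $v^*=\ell$ is the max, forcing $\ell\neq\mathbf{0}$ since $w_0(e_i)=1>0=w_0(\mathbf{0})$) must be paired differently: either with a codim-$1$ coface in a descent direction $i(\ell)$ at $\ell$ when $\ell$ is not itself a generalized local minimum, or, when $\ell$ is a generalized local minimum (so that no vertex-level descent direction exists at $\ell$), with an adjacent edge $[\ell,\ell+e_i]$ that belongs to $S_n$ precisely because $n\geq 1$.

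Verification amounts to the standard Forman axioms: the matching is well-defined and covers every cube except $\{\mathbf{0}\}$; paired cubes share the same $w_q$-value and therefore lie in the same $S_n$; and V-paths cannot cycle because a lexicographic invariant built from $(w_0(v^*(\square)),v^*(\square))$ strictly decreases along them. The only critical cell inside $S_n$ for $n\geq 1$ is then $\{\mathbf{0}\}$, and Forman's theorem concludes that $S_n$ is contractible. The most delicate point I anticipate is reconciling the three pairing regimes (face-down, coface-up, and upward edge-pairing at generalized local minima) into a single globally acyclic matching, especially when cubes governed by different regimes meet at shared faces. This is also precisely where the threshold $n\geq 1$ enters the argument: the edge-pairings at a generalized local minimum $\ell$ rely on edges $[\ell,\ell+e_i]$ whose other endpoint has weight $w_0(\ell)+1$, which may fail to lie in $S_0$ but are admitted to $S_n$ for any $n\geq 1$. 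The stability property (Corollary~\ref{cor:stabforw0}) does its most intensive work in this bookkeeping, ensuring that the descent choices used to build the matching propagate coherently from a lattice point to all cubes having it as their max vertex.
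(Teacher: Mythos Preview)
Your discrete Morse strategy is in the right spirit and closely parallels the paper's explicit deformation retraction, but as written it has two genuine gaps.

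First, the three-regime rule is not an involution, hence not a valid discrete vector field. If $\square=(\ell,I)$ has $|K^*|\geq 2$ and regime~1 pairs it with the codim-$1$ face $\sigma'$ containing $v^*$, then $v^*(\sigma')=v^*$ and $K^*(\sigma')=K^*\setminus\{i_0\}\neq\emptyset$, so regime~1 sends $\sigma'$ to a further face, not back to $\square$. A coherent matching must be organized around a single direction $d(v)$ per max vertex $v$: pair $(v,J^+,J^-)$ down if $d(v)\in J^-$, up otherwise. For the ``up'' move to produce a valid cube one needs $d(v)\notin J^+$, i.e.\ $w_0(v+e_{d(v)})>w_0(v)$, and for the coface to again have $v$ as max vertex one needs $w_0(\ell'-e_{d(v)})<w_0(\ell')$ for every vertex $\ell'$ of the cube. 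Neither of these follows from the bare stability property; they are precisely the content of the paper's Theorem~\ref{th:gooddirection}.

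Second, and this is the sharper point, Proposition~\ref{cl:glm} only supplies a descent direction $i$ with $w_0(v-e_i)<w_0(v)$; it does not supply one with $w_0(v+e_i)>w_0(v)$ as well. Theorem~\ref{th:gooddirection} (via Lemma~\ref{lem:wl+cube}) proves such a ``good'' direction exists when $w_0(v)\geq 2$, and this threshold is tight: for $r=1$ and any non-smooth $(C,o)$ one has $w_0(0)=w_0(2)=0$, $w_0(1)=1$, so at $v=1$ every descent direction also lies in $I^+(1)$, and the edge $[1,2]$ (max vertex $1$, $K^*=\emptyset$) admits no coface in your regime~2. Regime~3 does not help, since it treats only $0$-cells at generalized local minima, not positive-dimensional cubes whose max vertex has weight $1$. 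The paper sidesteps this entirely: it builds retractions $S_n\cap R(\mathbf{0},\mathbf{c})\searrow S_{n-1}\cap R(\mathbf{0},\mathbf{c})$ only for $n\geq 2$ (where Theorem~\ref{th:gooddirection} applies), so the contractible rectangle $R(\mathbf{0},\mathbf{c})$ retracts onto $S_1\cap R(\mathbf{0},\mathbf{c})$, and then Theorem~\ref{th:EUcurves} finishes. If you want a Morse-theoretic proof, the achievable target is a filtration-compatible matching on $R(\mathbf{0},\mathbf{c})$ with no critical cells at weight $\geq 2$; your stronger goal of a single critical cell $\{\mathbf{0}\}$ in every $S_n$, $n\geq 1$, requires new ideas at weight level~$1$ that you have not supplied.
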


The proof of Theorem \ref{th:Main2} follows the next strategy:
we construct a strong deformation  retraction from
the (contractible) space $R(\mathbf{0},\ccc)$ to its subspace $S_{n}\cap R(\mathbf{0},\ccc)$
 for any  $n>0$. This is done inductively
 via a strong deformation retraction  from  $S_n\cap R(\mathbf{0},\ccc)$
 to  $S_{n-1}\cap R(\mathbf{0},\ccc)$, which will be the composition of repeatedly
 choosing `independent'  lattice points
 $\ell \in S_n\cap R(\mathbf{0},\ccc)$ of maximal weight $n\geq 2$ and then deforming all cubes containing $\ell$ into $S_{n-1}\cap  R(\mathbf{0},\ccc)$, in this way  `eliminating' $\ell$.

\subsection{The M-vertices of cubes}\label{ss:6.2}

For cubes we used several notations, namely $(\ell,J)$, or $R(\ell, \ell+e_J)$, where $e_J:=\sum_{i\in J}e_i$. In this way we basically distinguish the vertices $\ell$ and $\ell+e_J$. However, in certain discussions it is convenient to distinguish some other vertices, e.g. a  vertex $\ell+e_{K}$ ($K\subset J$)  identified by
$w(\ell+e_{K})=\max\{w(\ell')\,|\, \ell' \ \text{is a vertex of } \ (\ell,J)\}$.
Then we can redefine the cube $(\ell, J)$, viewed from $\ell+e_K$ in `positive and negative' coordinate directions, see below.

\begin{nota}
For a lattice point $\ell \in \bZ^r_{\geq 0}$ and index sets $J^+, J^- \subset \mathcal{I}$ with $ J^+ \cap J^- = \emptyset$ and $\ell\geq e_{J^-}$, let $(\ell, J^+, J^-)$ denote the cube with vertices $ \ell + e_{K^+} - e_{ K^-}$, where $K^+$ and  $K^-$ run over all subsets of $ J^+$ and $J^-$ respectively. In this case the dimension of the cube is $q=\#\{J^+ \cup J^-\}$.
\end{nota}


\begin{define}\label{def:goodstarcube}
     The lattice point  $\ell$  is called  \textit{an M-vertex of the cube}  $(\ell, J^+, J^-)$ if
    \begin{itemize}
        \item[(i)] $w_0(\ell+e_i) < w_0(\ell)$ for all $i \in J^+$,  and
        \item[(ii)] $w_0(\ell-e_i) < w_0(\ell)$ for all $i \in J^-$.
    \end{itemize}
\end{define}
\noindent

A lattice point having maximal weight in a cube is an $M$-vertex of that cube. The lemma below states the converse.


\begin{lemma} \label{lem:goodstarcube}
     If $\ell$ is an M-vertex of $(\ell, J^+, J^-)$ then $w(\ell') \leq w(\ell)$
     for any vertex $\ell'$ of $(\ell, J^+, J^-)$.
\end{lemma}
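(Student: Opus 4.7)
The plan is to derive, for every vertex $\ell' = \ell + e_{K^+} - e_{K^-}$ of the cube (with $K^+ \subseteq J^+$, $K^- \subseteq J^-$), two upper bounds on $w_0(\ell')$ coming from two different lattice paths connecting $\ell$ to $\ell'$, and to combine them.

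First, I would show that $w_0(\ell + e_{K^+}) = w_0(\ell) - |K^+|$ for every $K^+ \subseteq J^+$, by induction on $|K^+|$. Ordering $K^+ = \{i_1, \ldots, i_k\}$ and assuming inductively that $w_0(\ell + e_{\{i_1,\ldots,i_{s-1}\}}) = w_0(\ell) - (s-1)$, I would apply the forward-stability implication (\ref{eq:genstability-}) with base point $\ell$, direction $i_s$, and shift $\bar{\ell} = e_{\{i_1,\ldots,i_{s-1}\}}$ (whose support excludes $i_s$): the strict decrease $w_0(\ell + e_{i_s}) = w_0(\ell) - 1$ propagates and yields $w_0(\ell + e_{\{i_1,\ldots,i_s\}}) = w_0(\ell + e_{\{i_1,\ldots,i_{s-1}\}}) - 1$. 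Symmetrically, (\ref{eq:genstability+}) applied with base $\ell - e_{i_s}$ for $i_s \in J^-$ (where the hypothesis $w_0(\ell) > w_0(\ell - e_{i_s})$ is precisely condition (ii) of the M-vertex definition) gives $w_0(\ell - e_{K^-}) = w_0(\ell) - |K^-|$ for every $K^- \subseteq J^-$.

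The second step exploits the fact that $w_0$ changes by exactly $\pm 1$ along every coordinate edge (Corollary \ref{cor:wfromS}), so along any lattice path of length $k$ the total change of $w_0$ is bounded by $k$. Applying this along the path of length $|K^-|$ from $\ell + e_{K^+}$ to $\ell'$ (subtracting the $e_j$'s, $j \in K^-$, one at a time), I obtain
\[
w_0(\ell') \leq w_0(\ell + e_{K^+}) + |K^-| = w_0(\ell) + |K^-| - |K^+|.
\]
Dually, the path of length $|K^+|$ from $\ell - e_{K^-}$ to $\ell'$ gives
\[
w_0(\ell') \leq w_0(\ell - e_{K^-}) + |K^+| = w_0(\ell) + |K^+| - |K^-|.
\]
Taking whichever of the two right-hand sides is smaller one concludes
\[
w_0(\ell') \leq w_0(\ell) - \bigl||K^+| - |K^-|\bigr| \leq w_0(\ell),
\]
which is the desired inequality.

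The main subtle point, which I regard as the hard part of the argument, is the observation that neither of the two bounds suffices on its own: the first is vacuous whenever $|K^-| > |K^+|$ and the second whenever $|K^+| > |K^-|$. It is precisely the combination, available only because the M-vertex hypothesis gives strict decreases in both the $+J^+$ and the $-J^-$ directions simultaneously, that forces the uniform bound $w_0(\ell') \leq w_0(\ell)$.
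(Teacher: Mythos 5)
Your proof is correct and follows essentially the same strategy as the paper's: first propagate the single-step decreases at the M-vertex to the corners of the cube (the paper does this for the full corners $\ell+e_{J^+}$ and $\ell-e_{J^-}$ via the matroid inequality (\ref{eq:matroid}), you do it for all partial corners via the equivalent stability property of Corollary \ref{cor:stabforw0}), and then exploit the fact that $w_0$ changes by exactly $\pm 1$ along each lattice edge to bound $w_0(\ell')$ from two sides. The only cosmetic difference is in the last step: the paper argues by contradiction, counting steps along a single increasing path from $\ell-e_{J^-}$ to $\ell+e_{J^+}$ passing through $\ell'$, whereas you combine two direct path inequalities anchored at $\ell+e_{K^+}$ and $\ell-e_{K^-}$; both versions rest on exactly the same mechanism.
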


\begin{proof}
        (I) \ Part (i) of Definition \ref{def:goodstarcube}
        is equivalent with $\hh(\ell)=\hh(\ell+e_i)$ for any  $i \in J^+$. Then induction on the cardinality $\#K^+$, for $K^+\subset J^+$, and
            (\ref{eq:matroid}) imply $\hh(\ell)=\hh(\ell+e_{J^+})$. Hence
             $w(\ell)-w(\ell+e_{J^+})=\#J^+$.

        (II) \   Similarly, (ii) reads as
         $\hh(\ell-e_i)=\hh(\ell)-1$ for any  $i \in J^-$. Then, induction on the cardinality $\#K^-$, for
         $K^-\subset J^-$, and
            (\ref{eq:matroid}) show that $\hh(\ell-e_{J^-})=\hh(\ell)-\#(J^-)$. Hence
             $w(\ell)-w(\ell-e_{J^-})=\#J^-$.

          (III) \
        Assume that there exists  a vertex $\ell'$ of  $(\ell, J^+, J^-)$ such that
        $w(\ell')>w(\ell)$. Then take an increasing path $\{ x^k\}_{k=0}^t$ of length $t=\#J^-+\#J^+$, so that $x_0=\ell-e_{J^-}$, $x_t=\ell+e_{J^+}$, and for a certain
        $k'$ we have $x^{k'}=\ell'$. Then
        \begin{equation}\tag{a}
            w( x^{k'})-w(\ell-e_{J^-}) > w( \ell)-w(\ell-e_{J^-})=\#J^-,
        \end{equation}
        \begin{equation}\tag{b}
            w( x^{k'})-w(\ell+e_{J^+}) > w( \ell)-w(\ell+e_{J^+})=\#J^+.
        \end{equation}
        Then    (\ref{eq:w0valtozasa}) and (a) (resp. (b)) show that
        from $\ell-e_{J^-}$ to $ x^{k'}$ we need strictly more than  $\#J^-$ steps in the path, respectively
         from $ x^{k'}$ to $\ell+e_{J^+}$  we need strictly  more than  $\#J^+$ steps. Hence the
         total number of steps in the  path is $>\#J^-+\#J^+=t$, which is a contradiction.
\end{proof}

\subsection{Good directions associated with lattice points}
In this subsection we  prove that for any lattice point $\ell$ with $w_0(\ell) \geq 2$ we can find a coordinate direction along which a (weight decreasing) strong  deformation retraction can
be constructed.

\begin{lemma}\label{lem:wl+cube}
    Let $\ell \geq \mathbf{0}$ be a lattice point and consider a positively directed cube $(\ell, J^+, \emptyset) \subset \bR_{\geq 0}^r$ (for some index set $J^+ \subset \mathcal{I}$). Suppose that
    \begin{itemize}
        \item[(a)] $w(\ell) > w(\ell+e_i)$ for all $i \in J^+$ (i.e. $\ell$ is an M-vertex of
        the cube), and 
        \item[(b)] $\ell^+:=\ell + e_{J^+}$ is a generalized local minimum (cf. Lemma \ref{lem:lm}).
    \end{itemize}
    Then $w(\ell) \leq 1$.
\end{lemma}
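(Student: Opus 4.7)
The plan is to reduce the bound $w(\ell) \leq 1$ to Proposition~\ref{cl:glm}, applied to the generalized local minimum $\ell^+$ along a carefully chosen increasing path from $\mathbf{0}$ to $\ell^+$ that passes through $\ell$.

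First I would use hypothesis~(a) together with part~(I) of the proof of Lemma~\ref{lem:goodstarcube}: the M-vertex property of $\ell$, combined with the matroid rank inequality (\ref{eq:matroid}), gives $\hh(\ell + e_K) = \hh(\ell)$ for every $K \subseteq J^+$. Taking $K = J^+$ yields
\[
w(\ell^+) = 2\hh(\ell) - |\ell| - |J^+| = w(\ell) - |J^+|,
\]
so it suffices to prove $w(\ell^+) \leq 1 - |J^+|$. The case $|J^+| = 0$ is immediate from Proposition~\ref{cl:glm} (which gives $w(\ell^+) \leq 0$), so I will assume $|J^+| \geq 1$ from now on.

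Next, I would fix an ordering $j_1, \dots, j_{|J^+|}$ of $J^+$ and pick an increasing path $\gamma = \{x^k\}_{k=0}^{t}$ from $\mathbf{0}$ to $\ell^+$ whose last $|J^+|+1$ vertices are $\ell,\ \ell + e_{j_1},\ \dots,\ \ell + e_{J^+} = \ell^+$. By the $\hh$-constancy established above, each of these last $|J^+|$ edges strictly decreases $w$. The dual path $\overline{\gamma}$, defined by $\overline{x}^k = \ell^+ - x^{t-k}$, begins with the vertices $\mathbf{0}, e_{j_{|J^+|}}, e_{j_{|J^+|}} + e_{j_{|J^+|-1}}, \dots, e_{J^+}$, all lying in the unit cube $R(\mathbf{0}, \mathbf{1})$. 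By Remark~\ref{rem:h|R(0,E)} one has $\hh \equiv 1$ on $R(\mathbf{0}, \mathbf{1}) \cap \bZ^r \setminus \{\mathbf{0}\}$, so $w(e_K) = 2 - |K|$ for every nonempty $K \subseteq J^+$. In particular the very first edge of $\overline{\gamma}$ is a $w$-increase (from $0$ to $1$), while the subsequent $|J^+| - 1$ edges are all $w$-decreases.

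Finally I would apply Proposition~\ref{cl:glm} to $\ell^+$ with this $\gamma$. Under the pairing $m = t - k - 1$ that matches step $k$ of $\gamma$ with step $m$ of $\overline{\gamma}$, the last $|J^+|$ (decreasing) steps of $\gamma$ correspond bijectively to the first $|J^+|$ steps of $\overline{\gamma}$. The very last step of $\gamma$ pairs with the first step of $\overline{\gamma}$, which is an increase and so does not contribute to (\ref{eq:w(glm)}); but each of the preceding $|J^+| - 1$ steps of $\gamma$ pairs with a decreasing step of $\overline{\gamma}$, giving at least $|J^+|-1$ contributing pairs. Hence $w(\ell^+) \leq -(|J^+| - 1) = 1 - |J^+|$, and together with $w(\ell^+) = w(\ell) - |J^+|$ this yields $w(\ell) \leq 1$. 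The main obstacle is the indexing bookkeeping: confirming that exactly the unique non-decreasing edge among the first $|J^+|$ edges of $\overline{\gamma}$ pairs with the non-contributing last edge of $\gamma$, so that all $|J^+|-1$ genuinely double-decreasing pairs really do lie in the counted range.
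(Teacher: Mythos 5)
Your proposal is correct and follows essentially the same route as the paper's proof: a path through $\ell$ ending with the edges of the cube, the identity $w(\ell+e_{K})=w(\ell)-\#K$ from (a) and the matroid inequality, the behaviour of $w$ on the unit cube via Remark \ref{rem:h|R(0,E)} for the dual path, and then the counting formula (\ref{eq:w(glm)}) applied to the generalized local minimum $\ell^+$. The only differences are cosmetic: you treat $\#J^+=0$ separately and spell out the index pairing $m=t-k-1$, which the paper leaves implicit.
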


\begin{proof}
   Let us consider an increasing path $\gamma=\{ x^k\}_{k=0}^t$ from $x^0=\mathbf{0}$ to $x^t=\ell^+$ through $x^{t-\#J^+}=\ell$ (clearly $t \geq \#J^+$).
    Using (a) and  induction (as in part (I) of the proof
    of Lemma \ref{lem:goodstarcube}) we have ($\dag$) $w(\ell+e_{K^+})=w(\ell)-\#K^+$ for any $K^+\subset J^+$. This implies that
    \begin{equation*} \label{eq:wdecreasesingsc}
     w(x^{k+1})<w(x^k)  \ \ \hbox{for any index } \ \ t-\#J^+ \leq k \leq t-1.
    \end{equation*}
Then the first $\#J^+$ steps  of the dual path $\{\ell^+-x^{t-k}\}_k$ connect
$\mathbf {0}$ with $e_{J^+}$.
Along this path the first step is $w$-increasing, but all the others are $w$-decreasing,  cf. Remark \ref{rem:h|R(0,E)}.
Hence, by  (\ref{eq:w(glm)}) we obtain  $w(\ell^+)\leq 1-\#J^+$. Finally, by ($\dag$) we have
$w(\ell) = w(\ell^+)+\#J^+$, hence $w(\ell)\leq 1$.
\end{proof}

\begin{theorem} \label{th:gooddirection}
    Let $\ell$ be a lattice point with $w(\ell) \geq 2$. Then there exists a coordinate direction $i \in \mathcal{I}$, such that for any cube $\square_q=(\ell, J^+, J^-)$ admitting
    $\ell$ as an M-vertex and satisfying $i \notin J^+ \cup J^-$ (i.e. $\square_q$ is contained in the hyperplane $\{ x_i = \ell_i\}$), we have  \begin{equation}\label{eq:kell}
        w(\ell'-e_i )<w(\ell')
    \end{equation}
    for any vertex $\ell'$ of $\square_q$. In addition, this coordinate direction $i$ also satisfies the following:
    \begin{equation}\label{eq:321irany}
        w(\ell-e_i) < w(\ell) < w(\ell+e_i).
    \end{equation}
    Such a direction will be called a good direction of the vertex $\ell$.
\end{theorem}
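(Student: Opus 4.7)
The plan is to identify the good direction $i$ by analysing small cubes at $\ell$, and then to propagate the one-dimensional ``$+e_i$-increasing'' behaviour to all cubes with $\ell$ as M-vertex.

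\textbf{Step 1.} I first show that $J^+_\ell := \{j\in\mathcal{I}\,:\,w(\ell+e_j)<w(\ell)\}$ is a proper subset of $\mathcal{I}$. Suppose the contrary, so that $\ell$ is an M-vertex of the full positive cube $(\ell, \mathcal{I}, \emptyset)$. Either $\ell+\mathbf{1}$ is a generalized local minimum, in which case Lemma \ref{lem:wl+cube} forces $w(\ell)\leq 1$, contradicting $w(\ell)\geq 2$; or $\ell+\mathbf{1}$ is not, in which case Lemma \ref{lem:lm}\textit{(a)} provides $j\in\mathcal{I}$ with $\Delta_j(\ell)\neq\emptyset$. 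Then $\overline{\Delta}_j(\ell)\neq\emptyset$, hence $w(\ell+e_j)>w(\ell)$ by Corollary \ref{cor:wfromS}, contradicting $j\in J^+_\ell$. I fix any $i\notin J^+_\ell$; necessarily $\ell_i\geq 1$, for otherwise $\overline{\Delta}_i(\ell)\subseteq\{\mathbf{0}\}$ would force $\ell=\mathbf{0}$, contradicting $w(\mathbf{0})=0<2$.

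\textbf{Step 2.} I claim that $i$ can be refined to also satisfy $w(\ell-e_i)<w(\ell)$, i.e., $i\in J^-_\ell := \{j\in\mathcal{I}\,:\,\ell_j\geq 1,\,w(\ell-e_j)<w(\ell)\}$. I would derive this by a dual analysis applied to the negative cube $(\ell,\emptyset,J^-_\ell)$: if no $i\notin J^+_\ell$ lay in $J^-_\ell$, then a symmetric version of Lemma \ref{lem:wl+cube} (formulated for the ``negative'' direction via the path technique of Proposition \ref{cl:glm}) together with Lemma \ref{lem:lm}\textit{(a)} applied at a suitable reflected vertex should again cap $w(\ell)$ at $1$, yielding a contradiction.

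\textbf{Step 3.} Fix $i$ as above and consider any cube $\square_q=(\ell,J^+,J^-)$ with $\ell$ an M-vertex and $i\notin J^+\cup J^-$. For a vertex $\ell'=\ell+e_{K^+}-e_{K^-}$, I must verify $\overline{\Delta}_i(\ell'-e_i)\neq\emptyset$, equivalently $w(\ell'-e_i)<w(\ell')$. The stability property (\ref{eq:genstability+}) applied at $\ell$ propagates the $+e_i$-increasing behaviour backward in all non-$i$ coordinates and immediately settles the case $K^+=\emptyset$. For $K^+\neq\emptyset$, I would combine the M-vertex identities $\hh(\ell+e_{K^+})=\hh(\ell)$ and $\hh(\ell-e_{K^-})=\hh(\ell)-\#K^-$ from the proof of Lemma \ref{lem:goodstarcube} with the matroid rank inequality (\ref{eq:matroid}) applied to the 2-faces spanned by an $e_j$ (with $j\in K^+$) and $e_i$, and argue by induction on $\#K^+$. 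Finally, (\ref{eq:321irany}) emerges automatically: the inequality $w(\ell)<w(\ell+e_i)$ is the defining property of $i$ in Step 1, while $w(\ell-e_i)<w(\ell)$ is Step 2 (equivalently, the cube condition applied to the $0$-cube $\{\ell\}$).

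\textbf{Main obstacle.} The delicate part is Step 3 in the mixed case $K^+,K^-\neq\emptyset$: stability (\ref{eq:genstability+}) only provides free backward propagation in non-$i$ coordinates, so pushing the $+e_i$-increasing property forward along $J^+$-directions requires exploiting the M-vertex structure of $\ell$ in a precise quantitative way. I expect this to hinge either on a more stringent selection of $i$ in Step 2 --- possibly one indexing a particular semigroup element realising $\overline{\Delta}_i(\ell)\neq\emptyset$, so that the corresponding witness simultaneously witnesses $\overline{\Delta}_i(\ell'-e_i)\neq\emptyset$ at every vertex $\ell'$ --- or on a tight submodularity argument for $\hh$ iterated over the 2-faces of $\square_q$ containing $e_i$.
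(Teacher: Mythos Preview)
Your Steps 1 and 2, once completed, would yield a direction $i$ satisfying \eqref{eq:321irany}; but this alone does \emph{not} make $i$ a good direction, and your direct verification in Step 3 cannot succeed in general. The difficulty you flag in the ``Main obstacle'' is real and not merely technical: for a fixed $i\notin I^+\cup I^-$ (where $I^+=J^+_\ell$ and $I^-=\{j:w(\ell-e_j)>w(\ell)\}$) there is no reason \eqref{eq:kell} should hold for vertices $\ell'$ with $K^+\neq\emptyset$, because stability only pushes the $e_i$-increasing property \emph{downward} in the non-$i$ coordinates, never upward. Neither a clever witness in $\overline{\Delta}_i(\ell)$ nor iterated submodularity will rescue this --- the claim is simply false for an arbitrary such $i$.

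The paper's proof avoids this by arguing by contradiction over \emph{all} candidate directions simultaneously. It sets $\ell^+:=\ell+e_{I^+}$ and first shows (via stability, as you essentially do) that $w(\ell^+)<w(\ell^+-e_j)$ for every $j\in I^+\cup I^-$. Now suppose no $i\in\mathcal{I}\setminus(I^+\cup I^-)$ is good. Then for each such $i$ there is a cube $(\ell,J^+,J^-)$ with $\ell$ as M-vertex, $i\notin J^+\cup J^-$, and a vertex $\ell'$ with $w(\ell'-e_i)>w(\ell')$. The key observation --- which your approach misses --- is that the M-vertex condition forces $J^+\subset I^+$, hence $\ell'\leq \ell^+$ and $(\ell^+-\ell')_i=0$. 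Stability \eqref{eq:genstability-} then propagates the failure \emph{forward} to $\ell^+$, giving $w(\ell^+)<w(\ell^+-e_i)$ for this $i$ as well. Combining with the $I^+\cup I^-$ case, $\ell^+$ is a generalized local minimum, and Lemma~\ref{lem:wl+cube} yields $w(\ell)\leq 1$, a contradiction. Thus the good direction is not constructed directly but is guaranteed to exist among the candidates; which one works may genuinely depend on the global structure of the semigroup.
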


\begin{proof}
    Set $I^+:=\{ i \in \mathcal{I}\,\vert \, w(\ell + e_i) < w(\ell)\}$. We will argue that in the absence of a good coordinate direction predicted  by the theorem, we would have at  $\ell^+:= \ell + e_{I^+}$ a generalized local minimum and could use Lemma \ref{lem:wl+cube} for the positively oriented cube $(\ell, I^+, \emptyset)$ to get $w(\ell) \leq 1$, which contradicts the assumption.

 First notice that
 by the stability property (\ref{eq:genstability-}) from $w(\ell + e_i) < w(\ell)$ we get
 \begin{equation}\label{eq:I}
 w(\ell^+)<w(\ell^+-e_i) \ \ \mbox{ for every index $i \in I^+$.}
 \end{equation}
 On the other hand, if we set $I^-:=\{ i \in \mathcal{I}\,\vert \, w(\ell - e_i) > w(\ell)\}$, then using the stability property again,
 we get
 \begin{equation}\label{eq:II}
 w(\ell^+)<w(\ell^+-e_i) \ \ \mbox{ for every index $i \in I^-$. }
 \end{equation}
 Now $I^+\cup I^-=\mathcal{I}$ cannot happen since in that case $\ell^+$ would be a generalized local minimum with $w(\ell)> 1$, contradicting Lemma \ref{lem:wl+cube} used for the positively oriented cube $(\ell, I^+, \emptyset)$. On the other hand, for any
  $i \in \mathcal{I} \setminus (I^+ \cup I^-)$  we automatically have
    \begin{equation*}
        w(\ell-e_i) < w(\ell) < w(\ell+e_i).
    \end{equation*}
    We claim that  at least one of the `remaining' coordinates  $i \in \mathcal{I} \setminus (I^+ \cup I^-)$
    satisfy the needed requirements.

    Assume on the contrary that for every coordinate index  $i \in \mathcal{I} \setminus (I^+ \cup I^-)$
    there exists a  cube $\square_{q,i}=(\ell, J^+, J^-)$ with $i \notin J^+ \cup J^-$, admitting $\ell$ as an M-vertex, and having a  vertex $\ell'$ with
     \begin{equation}\label{eq:6.3}
     w(\ell'-e_i)>w(\ell').\end{equation} (Note that they cannot be equal, see (\ref{eq:w0valtozasa})).
    Then clearly, $J^+ \subset I^+$ (because $\ell$ is an M-vertex),
    hence
    $\ell'\leq \ell+e_{J^+}\leq \ell+e_{I^+}= \ell^+$.
    Then, applying the stability property to (\ref{eq:6.3}),
\begin{equation}\label{eq:III}
 w_0(\ell')=w_0(\ell'-e_i)-1\ \ \Rightarrow\  \ w_0(\ell^+)=w_0(\ell^+-e_i)-1.
    \end{equation}
    But if this is true for every remaining coordinate $i \in \mathcal{I} \setminus (I^+ \cup I^-)$, then via (\ref{eq:I}), (\ref{eq:II}) and (\ref{eq:III}) the lattice point $\ell^+$ is a generalized local minimum, a contradiction. Therefore, there must exist a good coordinate direction $i \in \mathcal{I} \setminus (I^+ \cup I^-)$, which  satisfies
    (\ref{eq:321irany}) and  the needed properties for any $\square_q$
    contained in the hyperplane $\{ x_i = \ell_i\}$.
\end{proof}

\subsection{The deformation theorem} In this subsection we sketch the proof of the Deformation Theorem and present how it implies the Nonpositivity Theorem. The rigorous technical discussion of the deformation will be given in the following subsection.

\begin{theorem}[Deformation Theorem] \label{th:DefThm}
    Let $(C, o)$ be a reduced  curve singularity. Then for any $n \geq 1$ there exists a strong deformation retraction
    \begin{equation*}
        R(\mathbf{0}, \mathbf{c}) \searrow S_n \cap R(\mathbf{0}, \mathbf{c}).
    \end{equation*}
In particular,   $ R(\mathbf{0}, \mathbf{c}) $ and
$ S_n \cap R(\mathbf{0}, \mathbf{c})$ are homotopy equivalent and  both are contractible.
\end{theorem}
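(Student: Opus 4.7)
The plan is to proceed by downward induction on $n$. Since $w_0$ is bounded on the finite set $R(\mathbf{0},\mathbf{c}) \cap \bZ^r$, one has $S_N \cap R(\mathbf{0},\mathbf{c}) = R(\mathbf{0},\mathbf{c})$ for some $N$ large enough (cf.\ Theorem \ref{th:EUcurves}), so it suffices to produce, for each integer $n \geq 2$, a strong deformation retraction
\[ r_n \colon S_n \cap R(\mathbf{0},\mathbf{c}) \searrow S_{n-1} \cap R(\mathbf{0},\mathbf{c}), \]
and then to compose $r_N \circ r_{N-1} \circ \cdots \circ r_{n_0+1}$ to land in $S_{n_0} \cap R(\mathbf{0},\mathbf{c})$ for any $n_0 \geq 1$. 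The restriction $n \geq 2$ is precisely what is required to invoke Theorem \ref{th:gooddirection}.

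For fixed $n \geq 2$, I would enumerate the (finitely many) lattice points $\ell_1, \dots, \ell_m \in R(\mathbf{0},\mathbf{c})$ with $w_0(\ell_j) = n$, and build $r_n$ as a composition of elementary slides, each removing a single $\ell_j$. Every open cube $\square^\circ \subset S_n \setminus S_{n-1}$ satisfies $w_q(\square) = n$ and therefore contains at least one such $\ell_j$ as an M-vertex (cf.\ Lemma \ref{lem:goodstarcube}). For each $\ell = \ell_j$, Theorem \ref{th:gooddirection} furnishes a good direction $i = i(\ell)$ with $w(\ell - e_i) < w(\ell) < w(\ell + e_i)$ together with the descent inequality $w(\ell' - e_i) < w(\ell')$ at every vertex $\ell'$ of every cube having $\ell$ as M-vertex with $i \notin J^+ \cup J^-$.

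The elementary slide at $\ell$ is a linear homotopy that decreases the $i$-th coordinate by $1$ on the closed upper star
\[ \mathrm{St}(\ell) := \bigcup \{\, \square \mid \ell \text{ is an M-vertex of } \square \text{ and } w_q(\square) = n \,\}, \]
and is the identity off $\mathrm{St}(\ell)$. Cubes $\square = (\ell, J^+, J^-)$ in this star split into two kinds with respect to $i$: those with $i \in J^-$, which factor as $\square' \times [\ell_i - 1, \ell_i]$ with bottom face $\square' \times \{\ell_i - 1\} \subset S_{n-1}$; and those with $i \notin J^+ \cup J^-$, which lie in the hyperplane $\{x_i = \ell_i\}$ and whose translate $\square - e_i$ is entirely contained in $S_{n-1}$ by the good-direction inequality. (Cubes with $i \in J^+$ cannot occur, since $w(\ell + e_i) > w(\ell)$ would contradict the M-vertex condition.) A single linear homotopy translating $\mathrm{St}(\ell)$ by $-e_i$ restricts compatibly on the common faces of the two families and deforms $\mathrm{St}(\ell)$ onto $\mathrm{St}(\ell) \cap S_{n-1}$.

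The main obstacle I expect is to verify that these elementary slides can be concatenated into a well-defined strong deformation retraction $r_n$. Concretely, one must show that after performing the slides associated with $\ell_1, \dots, \ell_{j-1}$, the star $\mathrm{St}(\ell_j)$ is still unaffected and the good direction $i(\ell_j)$ remains available: this is the "independence" property alluded to in the strategy outline. A natural remedy is to observe that each slide is supported in an arbitrarily thin tubular neighborhood of its star inside $S_n$, and that such neighborhoods can be shrunk so as to be pairwise disjoint (or, equivalently, that the slides leave the vertices $\ell_{k}$ of weight $n$ untouched for $k \neq j$, since only the single vertex $\ell_j$ in each star has weight $n$). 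Assembling the elementary slides in any order gives $r_n$, and composing $r_N \circ \cdots \circ r_{n_0+1}$ produces the desired strong deformation retraction $R(\mathbf{0},\mathbf{c}) \searrow S_{n_0} \cap R(\mathbf{0},\mathbf{c})$ for any $n_0 \geq 1$.
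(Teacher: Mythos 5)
Your global strategy coincides with the paper's: downward induction on $n$, reduction to a single step $S_n\cap R(\mathbf{0},\mathbf{c})\searrow S_{n-1}\cap R(\mathbf{0},\mathbf{c})$ for $n\geq 2$, elimination of the weight-$n$ lattice points one at a time, and the use of the good direction $i$ from Theorem \ref{th:gooddirection} to see that every cube of the closed star of $\ell$ either contains the edge $[\ell-e_i,\ell]$ or lies in the hyperplane $\{x_i=\ell_i\}$ and can be pushed into $S_{n-1}$ in the $-e_i$ direction. This combinatorial part is exactly the paper's observation (\ref{eq:Stesell-Ei}).

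The gap is in the elementary move itself. A ``linear homotopy translating $\mathrm{St}(\ell)$ by $-e_i$, identity off $\mathrm{St}(\ell)$'' is not a strong deformation retraction and is not even continuous: the faces of $\mathrm{St}(\ell)$ not containing $\ell$ (the subcomplex $T_\ell$) are shared with cubes outside the star, so they must stay fixed throughout the homotopy, yet a rigid translation moves them; moreover the time-one image $\mathrm{St}(\ell)-e_i$ is not a subspace of $\mathrm{St}(\ell)$ (for a cube with $i\in J^-$ it lands in $[\ell_i-2,\ell_i-1]$, which need not lie in $S_n$ or even in $R(\mathbf{0},\mathbf{c})$). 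The same objection rules out a uniform vertical compression: the vertex $\ell+e_{J^+}$ of a cube in $\{x_i=\ell_i\}$ belongs to $T_\ell$ and must not move, while $\ell$ itself must be pushed to level $\ell_i-1$. What is needed is a retraction of $\mathrm{St}(\ell)$ onto $T_\ell$ that fixes $T_\ell$ pointwise; the paper achieves this with a \emph{stopped central homothety} from the external point $\ell+e_i$ (formulas (\ref{eq:varphidef})--(\ref{eq:phidef})), which flows each point radially away from $\ell+e_i$ until it first hits a face not containing $\ell$, and which therefore glues with the identity on the rest of the complex. Your proposed remedy for concatenation (pairwise disjoint tubular neighborhoods of the stars) also does not work: the closed stars of distinct weight-$n$ points genuinely overlap along cubes of $S_{n-1}$. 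No shrinking is needed; the paper simply performs the eliminations sequentially on the decreasing complexes $R_n^m\supset R_n^{m-1}$, each retraction being the identity on $R_n^{m-1}$, which contains all remaining weight-$n$ points and their stars.
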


\begin{cor}\label{cor:Sncontr}
    Theorems \ref{th:EUcurves} and \ref{th:DefThm} imply that for any $n \geq 1$, the cubical complex $S_n$ is contractible. Consequently, this proves  the Nonpositivity Theorem (Theorem \ref{th:Main}).
\end{cor}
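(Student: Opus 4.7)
The plan is to extract contractibility of $S_n$ (for $n \geq 1$) as a direct two-step consequence of the cited results, and then read off the vanishing of reduced cohomology from the very definition of $\bH^q_{\text{red}}$. No new geometric input is required: the deformation retraction of Theorem \ref{th:DefThm} provides contractibility locally (inside the rectangle), while Theorem \ref{th:EUcurves} is used only to transport this contractibility to the full complex $S_n$.

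First, I would observe that the rectangle $R(\mathbf{0}, \mathbf{c}) \subset \bR^r_{\geq 0}$ is a product of closed intervals, hence contractible. By Theorem \ref{th:DefThm}, for every $n \geq 1$ there is a strong deformation retraction $R(\mathbf{0}, \mathbf{c}) \searrow S_n \cap R(\mathbf{0}, \mathbf{c})$; since a strong deformation retract of a contractible space is contractible, $S_n \cap R(\mathbf{0}, \mathbf{c})$ is contractible for every $n \geq 1$.

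Second, I would apply Theorem \ref{th:EUcurves} with $\ell = \mathbf{c}$ (the hypothesis $\ell \geq \mathbf{c}$ holds trivially): this says that the inclusion $S_n \cap R(\mathbf{0}, \mathbf{c}) \hookrightarrow S_n$ is a homotopy equivalence. Combined with the previous step, $S_n$ is homotopy equivalent to a contractible space, so $S_n$ itself is contractible for every $n \geq 1$. This is exactly the first assertion of the corollary (and of Theorem \ref{th:Main2}). Finally, by the definition of the reduced lattice cohomology, $\bH^q_{\text{\normalfont red},\,2n}(C,o) = \widetilde{H}^q(S_n, \bZ)$; contractibility of $S_n$ for $n \geq 1$ immediately gives $\widetilde{H}^q(S_n, \bZ) = 0$ for all $q \geq 0$ and all integers $n > 0$, which is the Nonpositivity Theorem \ref{th:Main}.

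The main obstacle in this corollary is not in the deduction itself — which is a short chain of elementary homotopy-theoretic facts (contractibility is preserved by strong deformation retraction, and transferred across homotopy equivalences) — but rather in the substantive inputs already proved, namely the construction of the retraction in Theorem \ref{th:DefThm} and the homotopy equivalence in Theorem \ref{th:EUcurves}. Once those are in place, the corollary follows with essentially no additional work.
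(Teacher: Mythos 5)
Your proposal is correct and follows exactly the deduction the paper intends: contractibility of $R(\mathbf{0},\mathbf{c})$ is transferred to $S_n\cap R(\mathbf{0},\mathbf{c})$ via the strong deformation retraction of Theorem \ref{th:DefThm}, then to $S_n$ via the homotopy equivalence of Theorem \ref{th:EUcurves}, and the vanishing of $\widetilde{H}^q(S_n,\bZ)$ for $n>0$ follows from the definition of $\bH^*_{\text{red}}$. No discrepancy with the paper's (implicit) argument.
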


\begin{proof}[Proof of the Deformation Theorem]

We will give a sequence of strong deformation retractions
\begin{equation*}
    R(\mathbf{0}, \mathbf{c}) \overset{r_{k}}{\searrow} S_{k-1} \cap  R(\mathbf{0}, \mathbf{c}) \overset{r_{k-1}}{\searrow} \ldots S_{n} \cap R(\mathbf{0}, \mathbf{c}) \overset{r_n}{\searrow } S_{n-1} \cap R(\mathbf{0}, \mathbf{c}) \ldots  \overset{r_3}{\searrow} S_2 \cap R(\mathbf{0}, \mathbf{c}) \overset{r_2}{\searrow } S_1 \cap R(\mathbf{0}, \mathbf{c}),
\end{equation*}
where $k=\max\{ w(\ell) \,\vert \, \ell \in R(\mathbf{0}, \mathbf{c}) \cap \bZ^r\}$ (thus
$R(\mathbf{0}, \mathbf{c}) \subset S_k$).

Even more, each $r_n$ above will itself be the composition of strong deformation retractions, one for each lattice point in $(S_{n} \setminus S_{n-1}) \cap R(\mathbf{0}, \mathbf{c})$. Let us denote this set by
\begin{equation*}
    W_{n}:=\{ \ell \in \bZ^r\,\vert \,\ell \in  (S_{n} \setminus S_{n-1}) \cap R(\mathbf{0}, \mathbf{c})\} = \{ \ell \in \bZ^r \cap R(\mathbf{0}, \mathbf{c})\,\vert \, w(\ell) = n\}.
\end{equation*}
Also, let us order this set in any convenient way, such that $W_n=\{ \ell^1, \ldots, \ell^{\#W_n}\}$ and denote the closed cubical complex spanned by $(S_{n-1} \cap R(\mathbf{0}, \mathbf{c})) \cup \{\ell^1, \ell^2, \ldots, \ell^m\}$ by $R_n^m$ (i.e. this contains every closed cube, whose vertex set is contained in $(S_{n-1} \cap R(\mathbf{0}, \mathbf{c}) \cap \bZ^r) \cup \{\ell^1, \ell^2, \ldots, \ell^m\}$). Then, in the next subsection we will construct a sequence of strong deformation retractions
\begin{equation*}
S_{n} \cap  R(\mathbf{0}, \mathbf{c}) = R_n^{\#W_n} \overset{r^{\#W_n}_{n}}{\searrow} R_n^{\#W_n-1} \searrow \ldots R_n^m \overset{r^m_n}{\searrow } R_n^{m-1} \ldots  \overset{r^2_n}{\searrow} R_n^1 \overset{r_n^1}{\searrow } R_n^0= S_{n-1} \cap R(\mathbf{0}, \mathbf{c}).
\end{equation*}

The strong deformation retraction $r_n^m$ shall eliminate the lattice point $\ell^m$ and every closed cube containing it from the cubical complex $R_n^m$. Let us denote the closed cubical complex consisting of those cubes which have $\ell^m$ as their vertex
(the \textit{`closed star' of} $\ell^m$) by
\begin{equation}\label{eq:St_ldef}
    St_{\ell^m}:= \bigcup_{\substack{\square_q \subset R_n^m \\ \ell^m \text{ vertex of }\square_q}}\square_q.\end{equation}
    Then $ St_{\ell^m} \cup R_n^{m-1} = R_n^m$.
Note that the `open star' of $\ell^m $ is $St_{\ell^m}^{\circ}= \bigcup \{\, \square_q^{\circ} \ | \ \square_q \subset R_n^m \text{ and } \ell^m \text{ is a} $ $\text{vertex of }\square_q \, \}$, with $ St_{\ell^m}^{\circ} =  R_n^m \setminus R_n^{m-1} $. 

Let us denote by $T_{\ell^m}$ the cubical subcomplex of $St_{\ell^m}$ consisting of those closed faces in $St_{\ell^m}$, which do not contain $\ell^m$ as a vertex (i.e. $T_{\ell^m}:=\bigcup_{ \square_q \subset St_{\ell^m}, \ \ell^m \notin \square_q} \square_q$). Then clearly $St_{\ell^m} \cap R_n^{m-1} = T_{\ell^m}$. Therefore, we would need a strong deformation retraction $St_{\ell^m} \searrow T_{\ell^m}$, which then could be extended to $R_n^m$  with the identity map on $R_n^{m-1}$, to get a continuous strong deformation retraction $r_n^m: R_n^m \searrow R_n^{m-1}$.
This desired strong deformation retraction $St_{\ell^m} \searrow T_{\ell^m}$ is constructed in the following subsection (cf. Corollary \ref{rem:phiextended}).
\end{proof}

In order for the construction to work, we need the following observation:

\begin{lemma}
    For a good coordinate direction $i$ corresponding to the lattice point $\ell^m$, with $w_0(\ell^m)=n\geq2$ (cf. Theorem \ref{th:gooddirection})\begin{equation}\label{eq:Stesell-Ei}
    St_{\ell^m}=\bigcup_{\substack{(\ell^m, J^+, J^-) \subset R_n^m \\ i \in J^-}}(\ell^m, J^+, J^-)
\end{equation}
 as a topological subspace.
\end{lemma}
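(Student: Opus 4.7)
The plan is to prove the nontrivial inclusion
\[
St_{\ell^m} \subseteq \bigcup_{\substack{(\ell^m, J^+, J^-) \subset R_n^m \\ i \in J^-}}(\ell^m, J^+, J^-)
\]
by showing that each cube of $St_{\ell^m}$ is a face of some cube on the right-hand side. The reverse inclusion is immediate, since every cube appearing on the right-hand side has $\ell^m$ as a vertex and is therefore contained in $St_{\ell^m}$.

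First I would observe that every cube $\square = (\ell^m, J^+, J^-) \subset R_n^m$ automatically has $\ell^m$ as an $M$-vertex. Indeed, $w$ changes by exactly $\pm 1$ along a coordinate step, so $w(\ell^m \pm e_k) \in \{n-1,\, n+1\}$ for each $k$. The requirement that every vertex of $\square$ lie in the vertex set of $R_n^m$ (and hence have weight $\leq n$) rules out $n+1$, forcing $w(\ell^m + e_k) = n-1$ for $k \in J^+$ and $w(\ell^m - e_k) = n-1$ for $k \in J^-$.

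Next I use the good direction $i$ of $\ell^m$ supplied by Theorem \ref{th:gooddirection}. Since $w(\ell^m + e_i) = n+1$ by (\ref{eq:321irany}), the point $\ell^m + e_i$ can never be a vertex of any cube in $R_n^m$. Consequently $i \notin J^+$ for every $\square \subset R_n^m$ containing $\ell^m$ as a vertex, giving the dichotomy: either $i \in J^-$ (in which case $\square$ is already on the right-hand side) or $i \notin J^+ \cup J^-$. In the latter case I would consider the enlarged cube $\widetilde\square := (\ell^m,\, J^+,\, J^- \cup \{i\})$, which is well defined since $\ell^m_i \geq 1$ (otherwise (\ref{eq:321irany}) would be vacuous) and $i \notin J^+$. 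By construction $\widetilde\square$ contains $\square$ as a codimension-one face and has $i$ in its negative direction set, so once I verify $\widetilde\square \subset R_n^m$ the statement follows. The new vertices of $\widetilde\square$ are of the form $\ell' - e_i$ with $\ell'$ a vertex of $\square$; applying property (\ref{eq:kell}) of Theorem \ref{th:gooddirection} to the $M$-vertex $\ell^m$ of $\square$ (using that $i \notin J^+ \cup J^-$), one gets $w(\ell' - e_i) < w(\ell') \leq n$, so $\ell' - e_i \in S_{n-1} \cap R(\mathbf{0}, \mathbf{c})$ and thus belongs to the vertex set of $R_n^m$.

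The main obstacle is precisely this last step, namely ensuring that the enlargement $\widetilde\square$ stays inside $R_n^m$, i.e.\ that lowering every vertex of $\square$ by $e_i$ strictly drops the weight into $S_{n-1}$. This is exactly what the good direction of Theorem \ref{th:gooddirection} is engineered to provide: property (\ref{eq:kell}) guarantees the uniform weight drop across all vertices of any cube for which $\ell^m$ is an $M$-vertex and whose direction set avoids $i$.
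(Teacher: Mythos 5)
Your proposal is correct and follows essentially the same route as the paper: identify $\ell^m$ as an M-vertex of every cube in its star, use $w(\ell^m+e_i)>n$ to exclude $i\in J^+$, and for cubes avoiding $i$ enlarge them by adjoining $i$ to $J^-$, with property (\ref{eq:kell}) guaranteeing that the shifted vertices $\ell'-e_i$ land in $S_{n-1}\subset R_n^m$. The only (welcome) additions are your explicit justifications that $\ell^m$ is automatically an M-vertex and that $\ell^m_i\geq 1$, points the paper leaves implicit.
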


\begin{proof}
As $R_n^m \subset S_n$ and $w(\ell^m) =n$, (\ref{eq:w0valtozasa}) implies that
$\ell^m$ is an M-vertex of every 
 $q$--cube $\square_q \subset St_{\ell^m}$ containing $\ell^m$ (compare with Lemma \ref{lem:goodstarcube}).

As $n \geq 2$, we can use Theorem \ref{th:gooddirection}, which implies the existence of a good coordinate direction $i \in \mathcal{I}$.
Using the fact that a cube is contained in $S_n$ if and only if all its vertices $\ell'$ 
satisfy $w(\ell')\leq n$, we conclude that
there are no cubes $(\ell^m, J^+, J^-)$ in $St_{\ell^m}$ with $i \in J^+$, because $w(\ell^m+e_i) > w(\ell^m) = n$.

On the other hand, part (\ref{eq:kell}) of Theorem \ref{th:gooddirection} also implies that if $(\ell^m, J^+, J^-) \subset St_{\ell^m}$ with $i \notin J^+ \cup J^-$, then
$(\ell^m, J^+ , J^- \cup \{ i \}) \subset St_{\ell^m}$ as well. Indeed,  for every vertex $\ell'$ of $(\ell^m, J^+, J^-)$
\begin{equation*}
    w(\ell'-e_i) < w(\ell') \leq n, \ \ \mbox{hence } \ \  \ell'-e_i \in S_{n-1} \subset R_n^m.
\end{equation*}
Therefore, as a topological space $St_{\ell^m}$ is a union of closed cubes containing both $\ell$ and $\ell-e_i$.
\end{proof}

\subsection{Deformation of a closed cube} In this subsection we fill out the technical details of the previous subsection.
For $\ell$ with $w(\ell)\geq 2$ we fix a good direction $i$ and  
we will construct a strong deformation retraction of any union of cubes, containing both $\ell$ and $\ell -e_i$, to their faces furthest from $\ell$. (This vertex $\ell$ plays the role of some $\ell^m$ from the previous subsection.) At the first step we construct the deformation of one such cube and by the properties of the good direction and the virtue of the construction
all of these deformations glue together. 

\bekezdes Let $\ell$ be a vertex of a cube $\square_q$, such that there exists a coordinate $i$ with $\ell-e_i$ also a vertex of $\square_q$ (i.e. $\square_q= (\ell, J^+, J^-)$ with $i \in J^-$). We want to construct a strong deformation retraction from the cube $\square_q$ to the cubical subcomplex $T(\square_q)$ consisting of all those faces (subcubes), for which $\ell$ is not a vertex of:
\begin{equation}\label{eq:Tdef}
    T(\square_q):=\bigcup_{\substack{\square_{q'} \text{ face of } \square_q \\ \ell \notin \square_{q'}}} \square_{q'}.
\end{equation}
We will use a central homothety from the lattice point $\ell + e_i$.

\begin{remark}\label{rem:T-belipontok}
We can characterize the generic real points $x$ of the $(q-1)$-dimensional  faces of
$\square_q$ sitting in $T$  by having a coordinate $j$ for which $x_j \in \bZ$, but $x_j \neq \ell_j$ (more precisely,  $x_j$ can be $\ell_j \pm 1$,  the sign depending on whether $j \in J^+$ or $j \in J^-$).
\end{remark}


\begin{define}
    Let us define the following  family of `homotheties'  with center $\ell+e_i$:
    \begin{equation*} \label{eq:varphidef}
        \varphi: [0, 1] \times \square_q \rightarrow \bR^r, \ \varphi(t, x)=\varphi_t(x)=\varphi_t\Bigg(\sum_j x_j e_j\Bigg) = x + t\Bigg(\sum_{j \neq i}\dfrac{x_j-\ell_j}{\ell_i + 1 - x_i}e_j - e_i\Bigg).
    \end{equation*}

\begin{multicols}{2}

\tikzset{every picture/.style={line width=0.75pt}} 
\begin{tikzpicture}[x=1pt,y=1pt,yscale=-1,xscale=1]

\draw   (100,105) -- (150,105) -- (150,155) -- (100,155) -- cycle ;
\draw   (100,55) -- (150,55) -- (150,105) -- (100,105) -- cycle ;
\draw    (100,55) -- (149.73,137.6) ;
\draw    (100,120) -- (138.73,119.6) ;
\draw    (99.73,137.6) -- (149.73,137.6) ;
\draw   (94.07,56.67) .. controls (89.4,56.7) and (87.09,59.05) .. (87.12,63.72) -- (87.24,77.47) .. controls (87.29,84.14) and (84.99,87.49) .. (80.32,87.53) .. controls (84.99,87.49) and (87.35,90.8) .. (87.4,97.47)(87.38,94.47) -- (87.51,111.22) .. controls (87.55,115.89) and (89.9,118.2) .. (94.57,118.17) ;
\draw   (94.07,121.17) .. controls (89.54,121.24) and (87.31,123.53) .. (87.38,128.06) -- (87.38,128.06) .. controls (87.47,134.53) and (85.26,137.8) .. (80.73,137.87) .. controls (85.26,137.8) and (87.57,141.01) .. (87.67,147.48)(87.63,144.56) -- (87.67,147.48) .. controls (87.74,152.01) and (90.04,154.24) .. (94.57,154.17) ;
\draw[-{Stealth}]    (139,120) -- (150,138) ;
\draw [shift={(139,120)}, rotate = 58.57] [color={rgb, 255:red, 0; green, 0; blue, 0 }  ][fill={rgb, 255:red, 0; green, 0; blue, 0 }  ][line width=0.75]      (0, 0) circle [x radius= 1.34, y radius= 1.34]   ;
\draw [shift={(150,138)}, rotate = 58.57] [color={rgb, 255:red, 0; green, 0; blue, 0 }  ][fill={rgb, 255:red, 0; green, 0; blue, 0 }  ][line width=0.75]      (0, 0) circle [x radius= 1.34, y radius= 1.34]   ;
\draw    (100,55) -- (100,105) ;
\draw [shift={(100,105)}, rotate = 90] [color={rgb, 255:red, 0; green, 0; blue, 0 }  ][fill={rgb, 255:red, 0; green, 0; blue, 0 }  ][line width=0.75]      (0, 0) circle [x radius= 1.34, y radius= 1.34]   ;
\draw [shift={(100,55)}, rotate = 90] [color={rgb, 255:red, 0; green, 0; blue, 0 }  ][fill={rgb, 255:red, 0; green, 0; blue, 0 }  ][line width=0.75]      (0, 0) circle [x radius= 1.34, y radius= 1.34]   ;
\draw [shift={(150,105)}, rotate = 90] [color={rgb, 255:red, 0; green, 0; blue, 0 }  ][fill={rgb, 255:red, 0; green, 0; blue, 0 }  ][line width=0.75]      (0, 0) circle [x radius= 1.34, y radius= 1.34]   ;

\draw (39,84.73) node [anchor=north west][inner sep=0.75pt]  [font=\scriptsize] [align=left] {$\displaystyle \ell _{i} +1-x_{i}$};
\draw (38.5,134.23) node [anchor=north west][inner sep=0.75pt]  [font=\scriptsize] [align=left] {$\displaystyle x_{i} -\ell _{i} +1$};
\draw (140,112) node [anchor=north west][inner sep=0.75pt]  [font=\normalsize] [align=left] {$\displaystyle x$};
\draw (101.5,93) node [anchor=north west][inner sep=0.75pt]  [font=\normalsize] [align=left] {$\displaystyle \ell $};
\draw (151.5,93) node [anchor=north west][inner sep=0.75pt]  [font=\normalsize] [align=left] {$\displaystyle \ell +e_j$};
\draw (89.5,41.5) node [anchor=north west][inner sep=0.75pt]  [font=\normalsize] [align=left] {$\displaystyle \ell +e_{i}$};
\draw  [draw opacity=0][fill={rgb, 255:red, 255; green, 255; blue, 255 }  ,fill opacity=1 ]  (105,115.5) -- (133,115.5) -- (133,130.5) -- (105,130.5) -- cycle  ;
\draw (104.5,119.5) node [anchor=north west][inner sep=0.75pt]  [font=\scriptsize] [align=left] {$\displaystyle x_{j} -\ell _{j}$};
\draw  [draw opacity=0][fill={rgb, 255:red, 255; green, 255; blue, 255 }  ,fill opacity=1 ]  (118.5,130.5) -- (130,130.5) -- (130,145.5) -- (118.5,145.5) -- cycle  ;
\draw (121.5,134.5) node [anchor=north west][inner sep=0.75pt]  [font=\scriptsize] [align=left] {$\displaystyle 1$};
\draw (155.5,132) node [anchor=north west][inner sep=0.75pt]  [font=\normalsize] [align=left] {$\displaystyle \varphi _{\tau ( x)}( x)$};

\end{tikzpicture}

\hspace{-25mm}One can easily check that if $\varphi_{t_1}(x) \in \square_q$, then    \begin{equation*}  \hspace{-20.5mm}\varphi_{t_1+t_2}(x)= \varphi_{t_2}(\varphi_{t_1}(x)),
\end{equation*}
\hspace{-20.5mm}i.e. $\varphi$ respects the \emph{group law}. For a given point $x \in \square_q$ set
    \begin{equation*}
        \hspace{-20.5mm}\tau_j(x):= \begin{cases} x_i - (\ell_i-1), \hspace{21mm} \text{ if }j=i;\\
        \infty, \hspace{37.5mm} \text{ if }x_j=\ell_j, i \neq j; \\
        (\ell_i+1-x_i) \dfrac{1-|x_j-\ell_j|}{|x_j-\ell_j|}, \ \text{otherwise}.
        \end{cases}
    \end{equation*}
\end{multicols}

    By this definition, if $x_j \neq \ell_j$ ($j\neq i$), then $\varphi_{\tau_j(x)}(x)$ has $j$-coordinate $\ell_j + {\rm sgn}(x_j-\ell_j) \in \bZ$, whereas $\varphi_{\tau_i(x)}(x)$ has $i$-coordinate $\ell_i-1$. Then set
    \begin{equation}\label{eq:taudef}
        \tau(x):=\min \{\tau_j(x)\,\vert \, j \in \{1, \ldots, r\}\},
    \end{equation}
    i.e. the smallest $t \in [0, 1]$ such that $\varphi_{t}(x)\in T(\square_q)$. Notice, that this characterization implies that if $\varphi_t(x) \in \square_q$, then by the group law
    \begin{equation}\label{eq:tauesphi}
        \tau (\varphi_t(x))=\tau(x)-t;
    \end{equation}
    thus for any point $x\in \square_q$ the property $x \in T(\square_q)$ is equivalent to $\tau(x)=0$ (use Remark \ref{rem:T-belipontok}).

    We can now define the desired strong deformation retraction $\phi^{\square_q, i}: \square_q \searrow T(\square_q)$:
    \begin{equation} \label{eq:phidef}
        \phi^{\square_q, i}: [0, 1] \times \square_q \rightarrow \square_q, \ \phi^{\square_q, i}_t(x) = \begin{cases}
            \varphi_t(x),  \hspace{8mm} \text{ if }t \leq \tau(x);\\
            \varphi_{\tau(x)}(x), \hspace{3.5mm} \text{ if }t \geq \tau(x).
        \end{cases}
    \end{equation}
    Clearly,  points $x \in T(\square_q)$ have $\phi^{\square_q, i}_t(x) \equiv x$ for any $t$. On the other hand for any $x\in \square_q$ we have $\tau(x) \leq 1$ ($\tau_i \leq 1$ on $\square_q$), so $\phi^{\square_q, i}_1(x)=\varphi_{\tau(x)}(x)$ which is indeed contained in $T(\square_q)$ as $\tau(\varphi_{\tau(x)}(x))=0$, see (\ref{eq:tauesphi}). Thus we indeed defined a strong deformation retraction $\square_q \searrow T$.
\end{define}

\begin{claim}\label{cl:defretr}
    Let $\ell\in \mathbb{Z}^r$ be a lattice point, $i \in \mathcal{I}$ a coordinate and $\overline{St} \subset \mathbb{R}^r$ a union of closed cubes containing both $\ell$ and $\ell-e_i$ as vertices:
    \begin{equation*}
        \overline{St}:= \bigcup_{a \in \mathcal{A}}\square^{a}, \text{ for some finite index set } \mathcal{A} \text{ and  cubes } \square^{a}= (\ell, J^{+, a}, J^{-, a}) \text{ with } i \in J^{-, a}.
    \end{equation*}
    Let us define $\overline{T}:= \bigcup_{a \in \mathcal{A}} T(\square^{a})$ (where $T(\cdot)$ was defined in (\ref{eq:Tdef})). Then the strong deformation retractions $\phi^{\square^{a}, i}: \square^{a} \searrow T(\square^{a})$ corresponding to the coordinate $i$ and the $\square^{a}$ cubes of $\overline{St}$ (defined in
    (\ref{eq:phidef})) glue together to a deformation retraction $\overline{\phi}:\overline{St} \searrow \overline{T}$.
\end{claim}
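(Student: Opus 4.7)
My plan is to verify that the locally defined maps $\phi^{\square^a, i}$ and $\phi^{\square^b, i}$ coincide on every intersection $\square^a \cap \square^b$ and then invoke the pasting lemma for finitely many closed sets to produce a continuous $\overline{\phi}: [0,1] \times \overline{St} \to \overline{St}$. The deformation-retraction properties will then follow from the corresponding properties of each individual $\phi^{\square^a, i}$.

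The first step is to observe that both the homothety $\varphi$ and the stopping time $\tau$ introduced in (\ref{eq:taudef}) are actually intrinsic to the data $(x, \ell, i)$: $\varphi$ is a single map on $\mathbb{R}^r \setminus \{x_i = \ell_i + 1\}$ independent of the cube, and $\tau_j(x)$ uses only $|x_j - \ell_j|$ (together with the convention $\tau_j(x) = \infty$ when $x_j = \ell_j$). In particular, coordinates $j$ for which $x_j = \ell_j$ drop out of the minimum defining $\tau(x)$, and the remaining coordinates give the same contribution regardless of whether one computes in $\square^a$ or in $\square^b$. Also, $x_i \leq \ell_i$ on every cube we consider (since $i \in J^{-,a}$), so $\ell_i + 1 - x_i \geq 1$ and the denominator in $\varphi$ never vanishes on $\overline{St}$.

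Next, I would identify the intersection explicitly as $\square^a \cap \square^b = (\ell,\, J^{+,a} \cap J^{+,b},\, J^{-,a} \cap J^{-,b})$, which is again a cube of the same type (it still contains both $\ell$ and $\ell - e_i$, because $i \in J^{-,a} \cap J^{-,b}$). For a point $x$ in this intersection, every coordinate $j$ outside $(J^{+,a} \cap J^{+,b}) \cup (J^{-,a} \cap J^{-,b}) \cup \{i\}$ satisfies $x_j = \ell_j$, and a glance at the formula for $\varphi$ shows that $\varphi_t(x)$ preserves that value for all $t$. Consequently, $\varphi_t(x)$ remains inside $\square^a \cap \square^b$ precisely for $t \in [0, \tau(x)]$, and the stopping time agrees with the one computed for either cube. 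Hence $\phi^{\square^a, i}_t(x) = \phi^{\square^b, i}_t(x)$ on the intersection, and the glued map $\overline{\phi}$ is well-defined and continuous.

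The main technical obstacle is the verification that $\varphi_t(x)$ actually stays inside $\square^a \cap \square^b$ throughout $[0, \tau(x)]$; this requires a careful coordinate-by-coordinate sign analysis, using that $x_j - \ell_j$ has a determined sign on each cube (positive for $j \in J^{+}$, negative for $j \in J^{-}$) and that $\varphi_t$ moves each active coordinate monotonically away from $\ell_j$ at a rate that hits an integer face at exactly $\tau_j(x)$. To conclude that $\overline{\phi}$ is a strong deformation retraction, I will check that $\overline{\phi}_0 = \mathrm{id}$, that $\overline{\phi}_1(\overline{St}) \subseteq \overline{T}$ (since each $\phi^{\square^a, i}_1(\square^a) \subseteq T(\square^a)$), and, using the characterization from Remark \ref{rem:T-belipontok}, that any $y \in \overline{T} \cap \square^b$ automatically lies in $T(\square^b)$, so that $\overline{\phi}_t$ restricts to the identity on $\overline{T}$ for every $t$.
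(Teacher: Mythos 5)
Your proposal is correct and follows essentially the same route as the paper: the paper's (much terser) proof also rests on the observation that each cube is invariant under its own deformation and that the formulas for $\varphi$ and $\tau$ depend only on $x$, $\ell$ and $i$ — not on the ambient cube — so the local retractions agree on overlaps and glue by the pasting lemma. Your version simply spells out the details (identifying $\square^a\cap\square^b$ as a cube of the same type, checking invariance of the intersection under $\varphi_t$, and verifying the strong-deformation-retraction axioms) that the paper leaves implicit.
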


\begin{proof}
    Each $\square^{a}$ cube containing both $\ell$ and $\ell-e_i$ is invariant under the corresponding deformation $\phi^{\square^{a}, i}$ (cf. (\ref{eq:phidef})) and they agree on the intersections, as the formula (\ref{eq:varphidef}) only depends on the points $x$ and $\ell$.
\end{proof}

\begin{cor} \label{rem:phiextended}
     By (\ref{eq:Stesell-Ei}), for every integers $n\geq 2, 1 \leq m \leq \#W_n$, the space $St_{\ell^m}$ can be described as the union of closed cubes containing both $\ell$ and $\ell-e_i$, for the good direction $i \in \mathcal{I}$, thus, we can apply the previous Claim \ref{cl:defretr} to this case. Therefore, we indeed constructed the desired deformation retraction $St_{\ell^m} \searrow T_{\ell^m}$ for every $n\geq 2, 1 \leq m \leq \#W_n$.
\end{cor}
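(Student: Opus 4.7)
The plan is to apply Claim \ref{cl:defretr} directly to the specific data $\ell := \ell^m$, $\overline{St} := St_{\ell^m}$, with $i \in \mathcal{I}$ chosen as a good direction for $\ell^m$. Such a direction exists by Theorem \ref{th:gooddirection}, since the hypothesis $w(\ell^m) = n \geq 2$ is exactly what that theorem requires.

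First I would verify the structural hypothesis of Claim \ref{cl:defretr}, namely that $\overline{St}$ is a union of closed cubes each containing both $\ell$ and $\ell - e_i$ as vertices. This is furnished by the equation (\ref{eq:Stesell-Ei}), which exhibits $St_{\ell^m}$ as the union of closed cubes of the form $(\ell^m, J^+, J^-)$ with $i \in J^-$; by definition each such cube has $\ell^m$ and $\ell^m - e_i$ among its vertices. (The justification for the shape of (\ref{eq:Stesell-Ei}) was already spelled out above: no cube in $St_{\ell^m}$ can have $i \in J^+$, because $w(\ell^m + e_i) > n$ would force such a cube out of $S_n$; conversely, any cube with $i \notin J^+ \cup J^-$ can be enlarged by placing $i$ into $J^-$, thanks to property (\ref{eq:kell}) of a good direction.)

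Next I would match the output of Claim \ref{cl:defretr} with the desired target. The claim produces a strong deformation retraction $\overline{\phi} : St_{\ell^m} \searrow \overline{T}$, where $\overline{T} = \bigcup T((\ell^m, J^+, J^-))$ and, by (\ref{eq:Tdef}), each $T(\square_q)$ is the union of those faces of $\square_q$ that do not contain $\ell^m$ as a vertex. But this union, taken over all cubes in $St_{\ell^m}$, is exactly $T_{\ell^m}$ as defined in the previous subsection. Hence $\overline{\phi}$ is the strong deformation retraction $St_{\ell^m} \searrow T_{\ell^m}$ predicted by the corollary, which can then be extended by the identity on $R_n^{m-1}$ to produce the global map $r_n^m$ required in the Deformation Theorem.

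There is essentially no obstacle left: the compatibility of the local homotheties $\phi^{\square^a, i}$ on overlaps, and the fact that they fix $T(\square^a)$ pointwise, have already been established inside the proof of Claim \ref{cl:defretr}, as the formula (\ref{eq:varphidef}) depends only on $x$ and on the distinguished vertex $\ell^m$ (not on the particular cube $\square^a$). The only thing this corollary does is record that the abstract hypotheses of Claim \ref{cl:defretr} are satisfied in the precise geometric situation $(\ell^m, St_{\ell^m}, T_{\ell^m})$ arising in Step $r_n^m$ of the inductive construction.
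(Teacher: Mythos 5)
Your proposal is correct and follows exactly the route the paper takes: the corollary is just the observation that (\ref{eq:Stesell-Ei}) puts $St_{\ell^m}$ in the form required by Claim \ref{cl:defretr} for a good direction $i$ (which exists since $w(\ell^m)=n\geq 2$ by Theorem \ref{th:gooddirection}), and that the resulting target $\overline{T}$ coincides with $T_{\ell^m}$. Your extra care in checking $\overline{T}=T_{\ell^m}$ is a welcome detail but does not change the argument.
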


\end{document}